\documentclass[11pt]{article}

\usepackage[final]{graphicx}
\usepackage{color}
\usepackage{amsmath}
\usepackage{amssymb}
\usepackage{mathtools}
\usepackage{amsthm}
\usepackage{mathrsfs}
\usepackage{stmaryrd}
\usepackage[symbol,perpage]{footmisc}
\usepackage[sort]{natbib}
\usepackage{enumerate}
\usepackage{nth}
\usepackage[np,autolanguage]{numprint}
\usepackage{ifthen}
\usepackage{afterpage}
\usepackage[paper = letterpaper, centering,
            lmargin = 1in, rmargin = 1in,
            tmargin = 1in, bmargin = 1in]{geometry}
\usepackage{booktabs}
\usepackage{multirow, array}
\usepackage{xspace}
\usepackage{url}
\usepackage{acronym}
\acrodef{FSI}[FSI]{fluid structure interaction}
\acrodef{CSF}[CSF]{cerebrospinal fluid}
\acrodef{ALE}[ALE]{arbitrary Lagrangian-Eulerian}
\acrodef{FEM}[FEM]{finite element method}
\acrodef{FE}[FE]{finite element}
\acrodef{DAE}[DAE]{differential-algebraic equations}
\acrodef{BDF}[BDF]{backward differentiation formulas}

\theoremstyle{plain}
\newtheorem{theorem}{Theorem}
\newtheorem{lemma}{Lemma}

\theoremstyle{definition}

\newtheorem{problem}{Problem}
\theoremstyle{remark}
\newtheorem{remark}{Remark}

\DeclareMathOperator{\ldiv}{\nabla_{\!\bv{x}}\,\cdot}
\DeclareMathOperator{\udiv}{\nabla_{\!\bv{X}_{\s}}\,\cdot}
\DeclareMathOperator{\grad}{\nabla_{\!\bv{x}}}
\DeclareMathOperator{\Grad}{\nabla_{\!\bv{X}_{\s}}}

\DeclareMathOperator{\PSym}{PSym}

\DeclareMathOperator{\composition}{\text{\scriptsize$\circ$}}
\DeclareMathOperator{\Span}{span}

\newcommand{\lnth}[1]{$#1$th}
\newcommand{\ui}{\ensuremath{\hat{\imath}}}
\newcommand{\uj}{\ensuremath{\hat{\jmath}}}
\newcommand{\nsd}[1]{\ensuremath{\mathrm{d}{#1}}}

\newcommand{\bv}[1]{\boldsymbol{#1}}

\newcommand{\tensor}[1]{\mathsf{#1}}
\newcommand{\transpose}[1]{#1^{\mathrm{T}}}
\newcommand{\inversetranspose}[1]{\ensuremath{#1^{-\mathrm{T}}}}

\newcommand{\f}{\mathrm{f}}
\newcommand{\s}{\mathrm{s}}
\newcommand{\colondot}{\mathbin{:}}
\newcommand{\test}[1]{\ensuremath{\tilde{#1}}}
\newcommand{\ucdot}{\ensuremath{\!\cdot\!}}
\newcommand{\sap}{\ensuremath{1}}       
\newcommand{\sbp}{\ensuremath{2}}       
\newcommand{\scp}{\ensuremath{3}}       
\newcommand{\sdp}{\ensuremath{4}}       
\newcommand{\sad}{\ensuremath{\bar{1}}} 
\newcommand{\sbd}{\ensuremath{\bar{2}}} 
\newcommand{\scd}{\ensuremath{\bar{3}}} 
\newcommand{\sdd}{\ensuremath{\bar{4}}} 

\newcommand{\us}{\ensuremath{\bv{u}_{\s}}}
\newcommand{\vs}{\ensuremath{\bv{v}_{\s}}}
\newcommand{\vf}{\ensuremath{\bv{v}_{\f}}}
\newcommand{\fv}{\ensuremath{\bv{v}_{\text{flt}}}}
\newcommand{\ush}{\ensuremath{\bv{u}_{\s_{h}}}}
\newcommand{\vsh}{\ensuremath{\bv{v}_{\s_{h}}}}
\newcommand{\vfh}{\ensuremath{\bv{v}_{\f_{h}}}}
\newcommand{\fvh}{\ensuremath{\bv{v}_{\text{flt}_{h}}}}


\newcommand{\comsol}{COMSOL Multiphysics\textsuperscript{\textregistered}\xspace}

\title{%
Finite Element Formulation for a Poroelasticity Problem Stemming from Mixture Theory\footnote{Submitted for publication in Computer Methods in Applied Mechanics and Engineering.}
}

\author{%
Francesco Costanzo\footnote{Corresponding Author: \texttt{<costanzo@engr.psu.edu>}; Tel.: +1 814 863-2030; Fax: +1 814 865-9974}\\
Center for Neural Engineering\\
The Pennsylvania State University\\
W-315 Millennium Science Complex\\
University Park,
PA 16802
USA
\and
Scott T.~Miller\footnote{\texttt{<stmille@sandia.gov>};Tel.: +1 505 845-0487.}\\
Computational Simulation Group\\
Sandia National Laboratories\\
Albuquerque, NM 87185\\
USA}

\begin{document}

\maketitle
               
\begin{abstract}
A finite element formulation is developed for a poroelastic medium consisting of an incompressible hyperelastic skeleton saturated by an incompressible fluid.  The governing equations stem from mixture theory and the application is motivated by the study of interstitial fluid flow in brain tissue.  The formulation is based on the adoption of an \ac{ALE} perspective. We focus on a flow regime in which inertia forces are negligible. The stability and convergence of the formulation is discussed, and numerical results demonstrate agreement with the theory.
\end{abstract}

\textbf{Keywords:} Finite Element Method; Theory of Mixtures; Poroelasticity

\allowdisplaybreaks{                             %


\section{Introduction}
A central problem in brain physiology is the transport of metabolites produced by cell functions in brain tissue from their production site to the main \ac{CSF} compartment \citep{Iliff2012A-Paravascular-0,Iliff2013Brain-wide-Path0,Iliff2013Cerebral-Arteri-0,Iliff2013Is-There-a-Cere-0}.  The modeling of these transport phenomena has traditionally focused on Fickian diffusion within the extracellular space \citep{Sykova2004Diffusion-Prope0,Sykova2008Diffusion-in-Br0,Vargova2011Glia-and-Extrac0,Vargova2011The-Diffusion-P0} (see also \citealp{Gevertz2008A-Novel-Three-P0}).  More recently, the studies by Iliff and co-workers \citep{Iliff2012A-Paravascular-0,Iliff2013Brain-wide-Path0,Iliff2013Cerebral-Arteri-0,Iliff2013Is-There-a-Cere-0} point to the existence of pathways for metabolite exchange with significant convective transport.  Furthermore, evidence indicates that such convective component is driven by the pulsatile motion of arterial walls along the various elements of the brain vascular tree.

The coupling between transport and mechanical properties is a fundamental aspect of the design of tissue engineered scaffolds, especially for application in the regeneration of peripheral nerves \citep{Saracino2013Nanomaterials-D0,Dey2008Development-of-0,Dey2010Crosslinked-Ure0,Nguyen2015Tissue-Engineer-0}.  In these applications, it is essential to coordinate the evolution of the transport and mechanical properties with the rate of degradation of the material.  Modeling of these systems requires a framework for the system's poroelastic behavior along with the reaction-diffusion physics of the degradation process.

Motivated by these these problems, we considered transport models that could simultaneously include both diffusive and convective components.  The models must also have the ability to characterize the flow field of specific molecular constituents and their interaction with large deformations.  Finally, we needed models that can be expanded to include chemical reactions.. For these reasons, we focused on a model of transport in poroelastic media based on mixture theory \citep{Bowen1976Theory-of-Mixtu0,Bowen1980Incompressible-0,Bowen1982Compressible-Po0,Rajagopal1995Mechanics-of-Mixtures-0}.

There is a significant body of literature on numerical methods for flow in porous media and poroelasticity (see, e.g., \citealp{SelvaduraiMechPoro1996,Lewis1998The-Finite-Element-0,Armero1999Formulation-and-Finite-0,Coussy2010Mechanics-and-P0}), but we found limited work when such phenomena are studied from a mixture theory perspective, i.e., by considering the simultaneous existence of multiple independent velocity fields, as they appear in mixture theory.  In this paper we present our experience in formulating a mixed \ac{FEM} for a simple mixture consisting of an incompressible hyperelastic solid skeleton saturated by an incompressible fluid.  Our focus is to determine the fluid flow in addition to the filtration velocity.  In future developments, we plan to combine the porous flow problem with a convection-reaction-diffusion component for the species present in the fluid.  Hence, our quantities of interest are the pore pressure, the velocity fields of the fluid and solid phases, the filtration velocity, as well as the displacement field of the solid phase. The incompressibility of the constituents yields a constraint equation, which, when expressed in the body's current configuration, requires the volume-fraction-average of the solid and fluid velocity fields to be divergence-free.  Perhaps the most delicate aspect of the enforcement of this condition is the fact that the action of the divergence operator affects the porosity field in addition to the velocities.  From an experimental viewpoint, this field might have significant uncertainty in its determination.  To avoid dealing with gradients of the porosity we have considered two strategies: (\emph{i}) selecting four fields as primary unknowns, namely the solid displacement, solid velocity, fluid velocity, and pore pressure, along with weakening the constraint equation using the divergence theorem; and (\emph{ii}) modifying this formulation by replacing the fluid velocity with the filtration velocity as a primary field.  In the first approach the filtration velocity can be determined during post-processing as a simple $L^{2}$-projection.  Fluid velocity is recovered in an analogous manner
in the second approach.  We will discuss the advantages and disadvantages of both strategies.  As it turns out, the second strategy offers a coercivity property that is stronger than the first and can possibly justify such an approach in practice.  The first strategy does not appear to work well unless properly stabilized.  We have implemented  stabilization in both strategies for the quasi-static motion of a nonlinear poroelastic body.  In particular, we have adapted to the present context  the stabilization strategy demonstrated by \cite{Masud2002A-Stabilized-Mixed-0} (see also \citealp{Masud2007A-Stabilized-Mixed-0}).

We point out that there are various flow regimes of interest.  Specifically, there are conditions in which it is reasonable to assume that inertia effects are negligible and that therefore the evolution of the system is quasi-static.  On the other hand, there are some flow regimes in which it is desirable to account for inertia effects.  With this in mind, the work in the paper is limited to the analysis of the quasi-static approximation of the problem.  The fully dynamic case will be considered in future publications.

\section{Basic definitions and governing equations}
\label{sec: PP Governing equations}

\subsection{Configurations, motions, volume fractions, and incompressibility}
\label{subsec: configurations volume fractions}
With reference to Fig.~\ref{fig: configurations},
\begin{figure}[htb]
    \centering
    \includegraphics{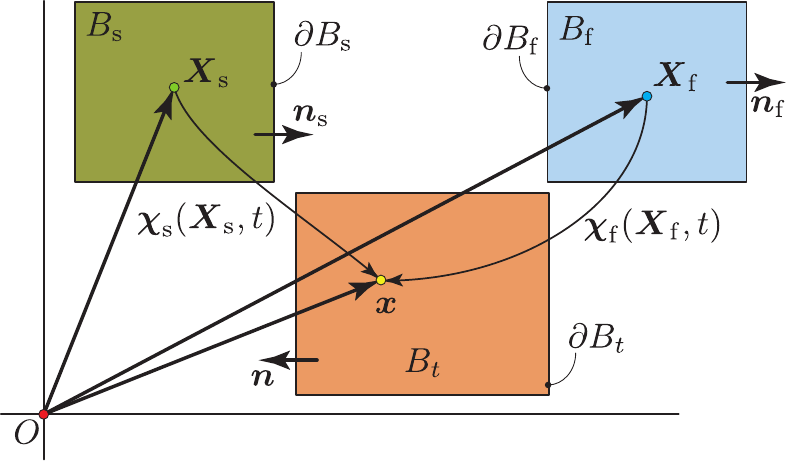}
    \caption{Current configuration $B_{t}$ of a binary mixture consisting of a solid skeleton and a fluid.  $B_{t}$ is the common image of the two motions $\bv{\chi}_{\s}$ and $\bv{\chi}_{\f}$ of the solid skeleton and the fluid, respectively.}
    \label{fig: configurations}
\end{figure}
we consider the deformed configuration of a porous body $B_{t}$ saturated by an incompressible fluid.  The solid phase is assumed to be hyperelastic and incompressible.  To describe the motion of the system, we adopt the theoretical setting in \cite{Bowen1980Incompressible-0} (see also \citealp{Bowen1976Theory-of-Mixtu0}).  Hence, $B_{t}$ is the common image of two diffeomorphisms: $\bv{\chi}_{\s}: B_{\s} \times [0,T] \to B_{t} \subset \mathscr{E}^{d}$ and $\bv{\chi}_{\f}: B_{\f} \times [0,T] \to B_{t} \subset \mathscr{E}^{d}$, where $B_{\s}$ and $B_{\f}$ are the reference configurations of the solid and fluid phases, respectively, $t \in [0,T]$, with $[0,T]$ a chosen time interval of interest.  By $\mathscr{E}^{d}$, with $n = 2, 3$, we denote the $d$-dimensional Euclidean point space and by $\mathscr{T}^{d}$ its companion translation vector space. The subscripts `$\s$' and `$\f$' stand for `solid' and `fluid', respectively.  Points in $B_{\s}$ and $B_{\f}$ will be denoted by $\bv{X}_{\s}$ and $\bv{X}_{\f}$, respectively, whereas $\bv{x}$ denotes position in $B_{t}$. $\partial B_{\s}$, $\partial B_{\f}$, $\partial B_{t}$ are the boundaries of $B_{\s}$, $B_{\f}$ and $B_{t}$, respectively, oriented by corresponding outward unit normal fields $\bv{n}_{\s}$, $\bv{n}_{\f}$, and $\bv{n}$.

For simplicity, the initial configurations for the solid and fluid are made to coincide with the initial configuration of the mixture, so that
\begin{equation}
\label{eq: initial configurations}
B_{\s} = B_{\f} = B_{t}\big|_{t = 0}.
\end{equation}

For each constituent $a = \s,\f$, the displacement, deformation gradient, and Jacobian determinant of the motions are, respectively,
\begin{equation}
\label{eq: us Fs and Js}
\bv{u}_{a}(\bv{X}_{a},t) \coloneqq \bv{\chi}_{a}(\bv{X}_{a},t) - \bv{X}_{a},
\quad
\tensor{F}_{a}(\bv{X}_{a},t) \coloneqq \frac{\partial \bv{\chi}_{a}(\bv{X}_{a},t)}{\partial \bv{X}_{a}},
\quad
J_{a}(\bv{X}_{a},t) \coloneqq \det{\tensor{F}_{a}(\bv{X}_{a},t)},
\end{equation}
where, $\forall \bv{X}_{a} \in B_{a}$ and $\forall t \in [0,t]$, $J_{a}(\bv{X}_{a},t) > 0$.

The spatial (or Eulerian) representation of the material velocity of constituent $a$ ($a = \s,\f$) is
\begin{equation}
\label{eq: material velocities}
\bv{v}_{a}(\bv{x},t) \coloneqq \frac{\partial \bv{\chi}_{a}(\bv{X}_{a},t)}{\partial t}\bigg|_{\bv{X}_{a} = \bv{\chi}_{a}^{-1}(\bv{x},t)}
=
\frac{\partial \bv{u}_{a}(\bv{X}_{a},t)}{\partial t}\bigg|_{\bv{X}_{a} = \bv{\chi}_{a}^{-1}(\bv{x},t)}
,\quad\forall\bv{x} \in B_{t}.
\end{equation}
We note that, in principle, it is possible to express the displacement field in Eulerian form, that is by writing
\begin{equation}
\label{eq: displacement Eulerian}
\bv{u}_{a}(\bv{x},t)\coloneqq \bv{x} - \bv{\chi}_{a}^{-1}(\bv{x},t),
\end{equation}
where, with a slight abuse of notation, we are using the symbol $\bv{u}_{a}$ to refer to the displacement field whether described in Lagrangian or Eulerian form and relying on the context to resolve the ambiguity.  With this in mind, for future reference we note that
\begin{equation}
\label{eq: Eulerian disp J rel}
\tensor{F}_{\s}(\bv{X},t)\big|_{\bv{X} = \bv{\chi}_{\s}^{-1}(\bv{x},t)} = (\tensor{I} - \grad \bv{u}_{\s})^{-1},
\end{equation}
where $\grad$ is the gradient with respect to $\bv{x}$ over $B_{t}$.  Denoting the material time derivative following the motion of phase $a$ by $D_{t_{a}}$, recalling that $\bv{v}_{\s}$ is the material time derivative of $\bv{u}_{\s}$, in an Eulerian context, for all $\bv{x} \in B_{t}$, we must have
\begin{equation}
\label{eq: us vs relation Eulerian}
D_{t_{\s}} \bv{u}_{\s} = \bv{v}_{\s}
\quad\Rightarrow\quad
\partial_{t}\bv{u}_{\s} + (\grad\bv{u}_{\s}) \bv{v}_{\s} = \bv{v}_{\s}
\quad\Rightarrow\quad
\bv{v}_{\s} = (\tensor{I} - \grad \bv{u}_{\s})^{-1} \partial_{t}\bv{u}_{\s},
\end{equation}
which, again, requires the invertibility of $\tensor{I} - \grad \bv{u}_{\s}$.  For future reference, we note that the Lagrangian expression of the above relation is: for all $\bv{X}_{\s} \in B_{\s}$ and $t \in (0,T)$, we must have
\begin{equation}
\label{eq: us vs relation Lagrangian}
D_{t_{\s}} \bv{u}_{\s} = \bv{v}_{\s}
\quad\Rightarrow\quad
\partial_{t}\bv{u}_{\s}(\bv{X}_{\s},t) = \bv{v}_{\s}(\bv{x},t)\big|_{\bv{x} = \bv{\chi}_{\s}(\bv{X}_{\s},t)}.
\end{equation}

The spatial representation of the volume fraction of constituent $a$ ($a = \s,\f$) is denoted by $\xi_{a}(\bv{x},t)$ and such that
\begin{equation}
\label{eq: volume fraction assumption}
0 < \xi_{a} < 1,\quad a = \s,\f.
\end{equation}
After the theory is formulated, we will consider the limit cases for $\xi_{\f} \to 0$ and $\xi_{\f} \to 1$.  As the solid has been assumed to be fluid-saturated, we have
\begin{equation}
\label{eq: volume fraction constraint}
\xi_{\s}(\bv{x},t) + \xi_{\f}(\bv{x},t) = 1,
\quad\forall\bv{x} \in B_{t}.
\end{equation}
The spatial representation of the mass density of the \lnth{a} constituent ($a = \s,\f$) is
\begin{equation}
\label{eq: ath mass density}
\rho_{a}(\bv{x},t) = \xi_{a}(\bv{x},t) \rho_{a}^{*}(\bv{x},t),
\quad\forall\bv{x} \in B_{t}.
\end{equation}
where $\rho_{a}^{*}(\bv{x},t)$ is the \emph{intrinsic mass density} of the \lnth{a} constituent (mass of constituent $a$ per unit volume actually occupied by $a$). The material time derivative of $\rho_{a}^{*}$ is equal to zero due to incompressibiity \citep{Bowen1980Incompressible-0,Gurtin-CMBook-1981-1,GurtinFried_2010_The-Mechanics_0}, that is,
\begin{equation}
\label{eq: incompressibility}
\partial_{t}\rho_{a}^{*} + \grad \rho_{a}^{*} \cdot \bv{v}_{a} = 0,
\end{equation}
where $\partial_{t}$ is the partial derivative with respect to $t$. Recalling that, in the absence of chemical reactions, the balance of mass demands that $\partial_{t}\rho_{a} + \ldiv (\rho_{a} \bv{v}_{a}) = 0$, which, in turn, implies that 
\begin{equation}
\label{eq: volume fraction balance}
\partial_{t}\xi_{a} + \ldiv (\xi_{a} \bv{v}_{a}) = 0
\quad\Rightarrow\quad
\xi_{a}(\bv{x},t)\big|_{\bv{x} = \chi_{a}(\bv{X},t)}  J_{a}(\bv{X}_{a},t) = \xi_{R_{a}}(\bv{X}_{a}),~a = \s, \f,
\end{equation}
where $\xi_{R_{a}}$ is the referential volume fraction distribution of constituent $a$. Thus, we have
\begin{equation}
\label{eq: densities}
\rho_{\s} = \frac{\xi_{R_{\s}}}{J_{\s}} \rho_{\s}^{*},
\quad \text{and} \quad
\rho_{\f} = \biggl(1 - \frac{\xi_{R_{\s}}}{J_{\s}}\biggr) \rho_{\f}^{*}.
\end{equation}
\begin{remark}[Incompressibility constraint \,---\, Filtration velocity]
\label{remark: vflt functional space}
It is well known that the motion of a single incompressible constituent must satisfy the constraint that the determinant of the deformation gradient is constant and equal to one.  The motion of a saturated mixture of incompressible constituents is also subject to a constraint arising from the incompressibility of the constituents.  However, this constraint is not as easily expressed.  Specifically, we have assumed that $0 < \xi_{R_{\s}} < 1$ along with $0 < \xi_{a} < 1$ ($a = \s,\f$).  Furthermore, the saturation condition requires that $\xi_{\s} + \xi_{\f} = 1$, and the solid motion must have $J_{\s} > 0$ as well as $\xi_{\s} = \xi_{R_{\s}}/J_{\s}$.  Therefore, keeping in mind that $\xi_{R_{\s}}$ is a defining property of the reference configuration,  $J_{\s}$ need not be a constant, but it must satisfy the following inequality:
\begin{equation}
\label{eq: Js requirement}
0 < \xi_{R_{\s}} < J_{\s}.
\end{equation}
Clearly, something similar can be said about $J_{\f}$, however such inequalities do not provide enforceable constraint equations.  As it turns out, one can express the needed constraint by considering the combined effects of the balance of mass along with the saturation condition.  Specifically, taking the partial derivative with respect to time of Eq.~\eqref{eq: volume fraction constraint} and then rewriting the result using the first of Eqs.~\eqref{eq: volume fraction balance} we have
\begin{equation}
\label{eq: vavg constraint}
\ldiv \bigl(\xi_{\s} \bv{v}_{\s} + \xi_{\f} \bv{v}_{\f} \bigr) = 0\quad \text{in $B_{t}$},
\end{equation}
which implies that the volume fraction average of the velocity fields is divergence-free.  However, in general, neither $\bv{v}_{\s}$ nor $\bv{v}_{\f}$ are divergence free.  From an analytical viewpoint, Eq.~\eqref{eq: vavg constraint} presents some difficulties in that, in its current form, it would require the computation of gradients of $\xi_{\s}$ and $\xi_{\f}$, whose regularity depends by the smoothness of both the solid's motion and the datum $\xi_{R_{\s}}$.  The latter quantity must be estimated via experimental methods and it should be expected to lack sufficient smoothness.  Hence, it is useful to re-express the constraint equation in alternative ways that, from a numerical viewpoint, might offer a means not to compute gradients of $\xi_{R_{\s}}$.  Here, we reformulate our constraint equation using the \emph{filtration velocity}, which is a concept that arises naturally in the study of flow through a porous medium (see, e.g., \citealp{Bowen1982Compressible-Po0}) and that is useful in both practical and computational applications.  The filtration velocity, denoted by $\fv$, is defined as the velocity of the fluid relative to the solid scaled by the fluid's volume fraction: 
\begin{equation}
\label{eq: filtration def}
\fv \coloneqq \xi_{\f} (\bv{v}_{\f} - \bv{v}_{\s}).
\end{equation}
Using the filtration velocity, Eq.~\eqref{eq: vavg constraint} can be rewritten as
\begin{equation}
\label{eq: div free vels}
\ldiv \bigl(\vs + \fv) = 0\quad \text{in $B_{t}$}.
\end{equation}
\end{remark}
\begin{remark}[Fluid velocity vs.\ filtration velocity]
\label{remark: filtration}
When modeling the flow of an incompressible fluid within a rigid porous medium, the porosity is a constant and the solid is typically viewed as stationary.  In this context, the filtration velocity is a mere (possibly local) rescaling of the fluid velocity and, under common assumptions on the porosity, the two fields can be viewed as having the same analytical properties.  This is often the case in most applications of linear poroelasticity.  However, in a context of large deformations, the properties of the filtration and fluid velocities can be significantly different due to the fact that the porosity is also a function of the deformation of the solid skeleton.  In fact, even for our elementary flow model, the difference in question affects the coercivity of certain operators.
\end{remark}

\subsection{Constitutive assumptions and momentum balance laws}
Following \cite{Bowen1980Incompressible-0}, we assume that $\rho^{*}_{a}$ ($a = \s,\f$) is constant. Furthermore, neglecting surface tension effects, the interaction between fluid and solid phases will be described by a drag force proportional to the fluid velocity relative to the solid.  The solid phase is assumed to be hyperelastic with a strain energy per unit volume of the pure species given by 
\begin{equation}
\label{eq: strain energy}
W = W(\tensor{C}_{\s}) \quad\text{with}\quad \tensor{C}_{\s} = \transpose{\tensor{F}}_{\s} \tensor{F}_{\s}.
\end{equation}
Therefore, denoting by $\Psi(\bv{x},t)$ the strain energy density of the mixture per unit volume of the deformed configuration, we have
\begin{equation}
\label{eq: internal strain energy}
\Psi(\bv{x},t)\big|_{\bv{x} = \bv{\chi}_{\s}(\bv{X}_{\s},t)} = \frac{\xi_{R_{\s}}(\bv{X}_{\s})}{J_{\s}(\bv{X}_{\s},t)} \, W_{\s}(\tensor{C}_{\s}(\bv{X}_{\s},t)),
\end{equation}
and the elastic contribution to the overall Cauchy stress in the mixture is
\begin{equation}
\label{eq: Cauchy stress}
\tensor{T}^{e} = 
2 \frac{\xi_{R_{\s}}}{J_{\s}} \, \tensor{F}_{\s} \frac{\partial W_{\s}}{\partial \tensor{C}_{\s}} \transpose{\tensor{F}}_{\s},
\end{equation}
where the total Cauchy stress of the mixtures is
\begin{equation}
\label{eq: Cauchy stress def}
\tensor{T} = -p\tensor{I} + \tensor{T}^{e}.
\end{equation}
The quantity $p$ in the above equation is called the pore pressure and, in the weak formulation that follows, it can be viewed as a multiplier for the enforcement of the constraint in Eq.~\eqref{eq: vavg constraint}.

With these assumptions, following \cite{Bowen1980Incompressible-0}, the spatial (Eulerian) expression of the balance of momentum laws for the solid and the fluid phases are, respectively, 
\begin{align}
\label{eq: BOM solid}
\bv{0} &=
\rho_{\s} (\partial_{t} \bv{v}_{\s} + \bv{v}_{\s} \cdot \grad \bv{v}_{\s} - \bv{b}_{\s})
+ \xi_{\s} \grad p - \xi_{\f}^{2}\frac{\mu_{\f}}{k_{\s}} (\bv{v}_{\f} - \bv{v}_{\s}) -
\ldiv\tensor{T}^{e} \quad \text{in $B_{t}$},
\\
\label{eq: BOM fluid}
\bv{0} &=
\rho_{\f} \bigl( \partial_{t} \bv{v}_{\f} + \bv{v}_{\f} \cdot \grad \bv{v}_{\f} - \bv{b}_{\f}\bigr) +\xi_{\f} \grad p + \xi_{\f}^{2}\frac{\mu_{\f}}{k_{\s}} (\bv{v}_{\f} - \bv{v}_{\s})\quad \text{in $B_{t}$},
\end{align}
where $\bv{b}_{a}$ ($a = \s,\f$) is the body force per unit mass acting on phase $a$, $\mu_{\f}>0$ is the dynamic viscosity of the fluid, and $k_{\s}>0$ is the permeability of the solid.

\begin{remark}[Quasi-static Approximation]
\label{remark: quasi static equations}
Various flow regimes are of interest in applications.  In this paper we focus on quasi-static processes, which we define to be a motion with negligible inertia effects, that is, $\rho_{\s}D_{t_{\s}}\vs \approx \bv{0}$ and $\rho_{\f}D_{t_{\f}}\vf \approx \bv{0}$.  In this case, Eqs.~\eqref{eq: BOM solid} and~\eqref{eq: BOM fluid} reduce to
\begin{align}
\label{eq: BOM solid quasi static}
\bv{0} &=
- \rho_{\s} \bv{b}_{\s}
+ \xi_{\s} \grad p - \xi_{\f}^{2}\frac{\mu_{\f}}{k_{\s}} (\bv{v}_{\f} - \bv{v}_{\s}) -
\ldiv\tensor{T}^{e} \quad \text{in $B_{t}$},
\\
\label{eq: BOM fluid quasi static}
\bv{0} &=
- \rho_{\f} \bv{b}_{\f}+\xi_{\f} \grad p + \xi_{\f}^{2}\frac{\mu_{\f}}{k_{\s}} (\bv{v}_{\f} - \bv{v}_{\s})\quad \text{in $B_{t}$},
\end{align}
which, using the notion of filtration velocity in  Eq.~\eqref{eq: filtration def}, and recalling that we have assumed  $0 < \xi_{\f} < 1$,  can also be given the form
\begin{align}
\label{eq: BOM solid flt}
\bv{0} &=
-\rho_{\s} \bv{b}_{\s} -\rho_{\f} \bv{b}_{\f}
+ \grad p - \ldiv\tensor{T}^{e} \quad \text{in $B_{t}$},
\\
\label{eq: BOM fluid flt}
\bv{0} &=
-\rho_{\f}^{*} \bv{b}_{\f} + \grad p + \frac{\mu_{\f}}{k_{\s}} \fv \quad \text{in $B_{t}$}.
\end{align}
When $\bv{b}_{\f} = \bv{0}$, Eq.~\eqref{eq: BOM fluid flt} is referred to as Darcy's law, and it is important to note that the velocity field in this case is not the velocity of fluid particles in the strictest sense, but the filtration velocity.
\end{remark}

\subsection{Boundary conditions and governing equations}
As is traditional, we partition $\partial B_{t}$ into subsets $\Gamma_{t_{\s}}^{D}$ and $\Gamma_{t_{\s}}^{N}$ such that
\begin{equation}
\label{eq: boundary partition}
\partial B_{t} = \Gamma_{t_{\s}}^{D} \cup \Gamma_{t_{\s}}^{N}
\quad\text{and}\quad
\Gamma_{t_{\s}}^{D} \cap \Gamma_{t_{\s}}^{N} = \emptyset.
\end{equation}
We assume that
\begin{gather}
\label{eq: solid BCs}
\bv{u}_{\s}(\bv{x},t) = \bar{\bv{u}}_{\s}(\bv{x},t)~\text{for $\bv{x} \in \Gamma_{t_{\s}}^{D}$},
\quad\text{and}\quad
\bv{v}_{\s}(\bv{x},t) = \bar{\bv{v}}_{\s}(\bv{x},t) = D_{t_{\s}}\bar{\bv{u}}_{\s}(\bv{x},t) ~\text{for $\bv{x} \in \Gamma_{t_{\s}}^{D}$},
\shortintertext{as well as}
\tensor{T}(\bv{x},t)\bv{n}(\bv{x},t) = \bar{\bv{s}}(\bv{x},t)~\text{for $\bv{x} \in \Gamma_{t_{\s}}^{N}$},
\end{gather}
so that $\Gamma_{t_{\s}}^{D}$ is where Dirichlet data are prescribed for the solid skeleton, whereas $\Gamma_{t_{\s}}^{N}$ is where Neumann data are prescribed in the form of a traction distribution $\bar{\bv{s}}$. For the fluid, we assume that the boundary is impermeable:
\begin{equation}
\label{eq: boundary impermeability}
(\bv{v}_{\f}-\bv{v}_{\s}) \cdot \bv{n} = 0
~\text{on $\partial B_{t}$}.
\end{equation}
Clearly, Eq.~\eqref{eq: boundary impermeability} can be replaced by a number of conditions, depending on the specific physics at hand (cf.\ \citealp{dellIsola2009Boundary-Condit-0}).

\begin{remark}[Impermeability and filtration velocity]
\label{eq: impermeability}
For $0 < \xi_{\f} < 1$, Eq.~\eqref{eq: boundary impermeability} is equivalent to
\begin{equation}
\label{eq: boundary impermeability vflt}
\fv \cdot \bv{n} = 0~\text{on $\partial B_{t}$}.
\end{equation}
From a physical viewpoint, one can argue that Eq.~\eqref{eq: boundary impermeability vflt} is a better representation of the impermeability condition, in that its effect is \emph{weighed} by the volume fraction of fluid present.  That is, for a given difference $\vf-\vs$, the physical effect of this condition needs to take into account the amount of fluid that is actually flowing.
\end{remark}

\subsection{Problem's strong form}
In an Eulerian framework, the problem we consider is
\begin{problem}[Strong Form \,---\, Eulerian Framework]
\label{pb strong eulerian}
Given the
\begin{itemize}
\item
body force fields $\bv{b}_{\s}:B_{t} \to \mathscr{T}^{d}$ and $\bv{b}_{\f}:B_{t} \to \mathscr{T}^{d}$,
\item
prescribed boundary displacement $\bar{\bv{u}}_{\s}: \Gamma_{t_{\s}}^{D}\to\mathscr{T}^{d}$,
\item
applied boundary tractions $\bar{s}:\Gamma_{t_{\s}}^{N}\to\mathscr{T}^{d}$,
\item
intrinsic mass density distributions $\rho_{\s}^{*}>0$ and $\rho_{\f}^{*}>0$, 
\item
referential volume fraction distribution $\xi_{R_{\s}}:B_{\s}\to(0,1)$,
\item
constitutive relation in Eq.~\eqref{eq: Cauchy stress} along with constitutive properties $\mu_{\f}>0$, $k_{\s}>0$, and the strain energy $W(\tensor{C}_{\s}): \PSym(\mathscr{T}^{d})\to \mathbb{R}$ convex in $\tensor{C}_{\s}$, with $\PSym(\mathscr{T}^{d})$ the set of symmetric positive definite second order tensors on $\mathscr{T}^{d}$,
\item
initial conditions $\bv{v}_{\s}^{0}: B_{0}\to \mathscr{T}^{d}$, $\bv{v}_{\f}^{0}: B_{0}\to \mathscr{T}^{d}$, and  $\bv{u}_{\s}^{0}: B_{0}\to \mathscr{T}^{d}$, with $B_{0} = B_{t}\big|_{t = 0}$,
\end{itemize}
for $t \in (0,T)$, find $\bv{v}_{\s}: B_{t}\to \mathscr{T}^{d}$, $\bv{v}_{\f}: B_{t}\to \mathscr{T}^{d}$, $p: B_{t}\to \mathbb{R}$, and  $\bv{u}_{\s}: B_{t}\to \mathscr{T}^{d}$ with $\det\bigl[(\tensor{I} - \grad\bv{u}_{\s})^{-1}\bigr]>0$, such that Eqs.~\eqref{eq: vavg constraint}, \eqref{eq: BOM solid}, \eqref{eq: BOM fluid}, and the last of Eqs.~\eqref{eq: us vs relation Eulerian} are satisfied along with the boundary conditions in Eqs.~\eqref{eq: solid BCs}--\eqref{eq: boundary impermeability}.
\end{problem}

For the purpose of predicting the deformation of the solid skeleton from its reference configuration, we adopt an \ac{ALE} approach by which the governing equations are reformulated on the domain $B_{\s}$, taken to serve the double duty of both reference and initial configuration of the solid skeleton.  As mentioned earlier, to avoid a proliferation of symbols, functions describing physical quantities defined on $B_{t}$ and on $B_{\s}$ will be denoted in the same way and ambiguity is resolved by context.  With this in mind, using standard techniques from continuum mechanics (cf.\ \citealp{GurtinFried_2010_The-Mechanics_0}) we now restate Problem~\ref{pb strong eulerian} as follows:
\begin{problem}[Strong Form \,---\, \ac{ALE} Framework]
\label{problem: ALE strong}
Referring to the first of Eqs.~\eqref{eq: us Fs and Js} and~\eqref{eq: displacement Eulerian}, we consider the map $\bv{\chi}_{\s}^{-1}: B_{t} \to B_{\s}$ such that $\bv{x} = \bv{\chi}_{\s}(\bv{X}_{\s},t) = \bv{X}_{\s} + \bv{u}_{\s}(\bv{X}_{\s},t)$.  Under this map, given a generic field $\zeta(\bv{x},t)$ on $B_{t}$, we define a corresponding field $\zeta(\bv{X}_{\s},t)$ such that
\begin{equation}
\label{eq: image under map}
\zeta(\bv{X}_{\s},t) \coloneqq \zeta(\bv{x},t) \composition \bv{\chi}_{\s}(\bv{X}_{\s},t).
\end{equation}
Then, given the
\begin{itemize}
\item
body force fields $\bv{b}_{\s}:B_{\s} \to \mathscr{T}^{d}$ and $\bv{b}_{\f}:B_{\s} \to \mathscr{T}^{d}$,
\item
prescribed boundary displacement $\bar{\bv{u}}_{\s}: \Gamma_{\s}^{D}\to\mathscr{T}^{d}$,
\item
applied boundary tractions $\bar{s}:\Gamma_{\s}^{N}\to\mathscr{T}^{d}$,
\item
intrinsic mass density distributions $\rho_{\s}^{*}>0$ and $\rho_{\f}^{*}>0$, 
\item
referential volume fraction distribution $\xi_{R_{\s}}:B_{\s}\to (0,1)$,
\item
constitutive relation in Eq.~\eqref{eq: Cauchy stress} along with constitutive properties $\mu_{\f}>0$, $k_{\s}>0$, and the strain energy $W(\tensor{C}_{\s}): \PSym(\mathscr{T}^{d})\to \mathbb{R}$ convex in $\tensor{C}_{\s}$, with $\PSym(\mathscr{T}^{d})$ the set of symmetric positive definite second order tensors on $\mathscr{T}^{d}$,
\item
initial conditions $\bv{v}_{\s}^{0}: B_{0}\equiv B_{\s}\to \mathscr{T}^{d}$, $\bv{v}_{\f}^{0}: B_{0}\to \mathscr{T}^{d}$, and  $\bv{u}_{\s}^{0}: B_{0}\equiv B_{\s} \to \mathscr{T}^{d}$,
\end{itemize}
for $t \in (0,T)$, find $\bv{v}_{\s}: B_{\s}\to \mathscr{T}^{d}$, $\bv{v}_{\f}: B_{\s}\to \mathscr{T}^{d}$, $p: B_{\s}\to \mathbb{R}$, and  $\bv{u}_{\s}: B_{\s}\to \mathscr{T}^{d}$ with $J_{\s} = \det\bigl(\tensor{I} + \Grad\bv{u}_{\s}\bigr)>0$, such that, for all $\bv{X}_{\s} \in B_{\s}$,
\begin{equation}
\label{eq: ALE initial conditions}
\bv{u}_{\s}\big|_{t=0} = \bv{u}_{\s}^{0}, \quad
\bv{v}_{\s}\big|_{t=0} = \bv{v}_{\s}^{0}, \quad\text{and}\quad
\bv{v}_{\f}\big|_{t=0} = \bv{v}_{\f}^{0}~\text{on $B_{\s}$},
\end{equation}
\begin{align}
\label{eq: ALE us vs constraint}
\bv{0} &= \partial_{t}\bv{u}_{\s} - \bv{v}_{\s},
\\
\label{eq: ALE BOM solid}
\bv{0} &=
\xi_{R_{\s}} \rho_{\s}^{*} (\partial_{t} \bv{v}_{\s} - \bv{b}_{\s})
+ \xi_{R_{\s}} \inversetranspose{\tensor{F}}_{\s} \Grad p - \bigl(J_{\s} - \xi_{R_{\s}}\bigr)^{\!2}\frac{\mu_{\f}}{J_{\s}k_{\s}} (\bv{v}_{\f} - \bv{v}_{\s}) -
\udiv\tensor{P}^{e},
\\
\label{eq: ALE BOM fluid}
\bv{0} &=
\begin{multlined}[t]
(J_{\s} -  \xi_{R_{\s}}) \rho_{\f}^{*} \bigl[ \partial_{t} \bv{v}_{\f} +
\Grad \bv{v}_{\f}\tensor{F}_{\s}^{-1}( \bv{v}_{\f} -  \bv{v}_{\s}) - \bv{b}_{\f}\bigr]
\\
\qquad\qquad+ 
(J_{\s} -  \xi_{R_{\s}}) \inversetranspose{\tensor{F}}_{\s} \Grad p + \bigl(J_{\s} - \xi_{R_{\s}}\bigr)^{\!2}\frac{\mu_{\f}}{J_{\s}k_{\s}} (\bv{v}_{\f} - \bv{v}_{\s}),
\end{multlined}
\\
\label{eq: ALEvavg constraint}
0 &=\inversetranspose{\tensor{F}}_{\s} \colondot \Grad \biggl[\frac{\xi_{R_{\s}}}{J_{\s}} \bv{v}_{\s} + \biggl(1 - \frac{\xi_{R_{\s}}}{J_{\s}} \biggr) \bv{v}_{\f} \biggr],
\end{align}
and such that
\begin{alignat}{4}
\label{eq: ALE boundary conditions 1}
\bv{u}_{\s} - \bar{\bv{u}}_{\s} &= \bv{0} \quad& &\text{on $\Gamma_{\s}^{D}$},
&\qquad
\bv{v}_{\s} - \partial_{t}\bar{\bv{u}}_{\s} &= \bv{0} \quad& &\text{on $\Gamma_{\s}^{D}$},
\\
\label{eq: ALE boundary conditions 2}
\frac{-p J_{\s}\inversetranspose{\tensor{F}}_{\s} + \tensor{P}^{e}}{J_{\s}\|\inversetranspose{\tensor{F}}_{\s}\bv{n}_{\s}\|}\bv{n}_{\s} - \bar{\bv{s}} &= \bv{0} \quad& &\text{on $\Gamma_{\s}^{N}$},
&\qquad
\tensor{F}^{-1}_{\s}(\bv{v}_{\f} - \bv{v}_{\s}) \cdot \bv{n}_{\s} &= 0 \quad& &\text{on $\Gamma_{\s}$},
\end{alignat}
where $\Gamma_{\s}^{D} = \bv{\chi}_{\s}^{-1}\bigl(\Gamma_{t_{s}}^{D}\bigr)$, $\Gamma_{\s}^{N} = \bv{\chi}_{\s}^{-1}\bigl(\Gamma_{t_{s}}^{N}\bigr)$, and $\tensor{P}^{e}$ is the Piola-Kirchhoff stress tensor corresponding to $\tensor{T}^{e}$, i.e.,
\begin{equation}
\label{eq: Piola transform of Te}
\tensor{P}^{e} = J_{\s} \tensor{T}^{e} \inversetranspose{\tensor{F}}_{\s}
\quad\Rightarrow\quad
\tensor{P}^{e} = 2 \xi_{R_{\s}} \, \tensor{F}_{\s} \frac{\partial W_{\s}}{\partial \tensor{C}_{\s}},
\end{equation}
where this last expression results from Eq.~\eqref{eq: Cauchy stress}.
\end{problem}

\section{Weak formulations}
\label{sec: abstract EWF}

\subsection{Functional setting for principal unknowns}
We choose the function spaces for the solid's displacement and velocity, for the fluid's velocity, and the pore pressure to be, respectively:
\begin{alignat}{4}
\label{eq: functional spaces eulerian us}
\mathcal{V}^{\bv{u}_{\s}}
&\coloneqq
\bigl\{&
\bv{u}_{\s} &\in L^{2}(B_{\s})^{d} \,& &\big|\,& \grad \bv{u}_{\s} &\in L^{\infty}(B_{\s})^{d\times d}, \bv{u}_{\s} = \bar{\bv{u}}_{\s}~\text{on $\Gamma_{\s}^{D}$}
\bigr\},
\\
\label{eq: functional spaces eulerian vs}
\mathcal{V}^{\bv{v}_{\s}}
&\coloneqq
\bigl\{&
\bv{v}_{\s} &\in L^{2}(B_{\s})^{d} \,& &\big|\,& \grad \bv{v}_{\s} &\in L^{2}(B_{\s})^{d\times d}, \bv{v}_{\s} =  \partial_{t}\bar{\bv{u}}_{\s}~\text{on $\Gamma_{\s}^{D}$}
\bigr\},
\\
\label{eq: functional spaces eulerian vf}
\mathcal{V}^{\bv{v}_{\f}}
&\coloneqq
\bigl\{&
\bv{v}_{\f} &\in L^{2}(B_{\s})^{d}\,& &\big|\,& \grad \bv{v}_{\f} &\in L^{2}(B_{\s})^{d\times d}
\bigr\},
\\
\label{eq: functional spaces eulerian p}
\mathcal{V}^{p}
&\coloneqq
\bigl\{&
p &\in L^{2}(B_{\s})^{d}\,& &\big|\,& \grad p &\in L^{2}(B_{\s})^{d\times d}
\bigr\}.
\end{alignat}
We also introduce the spaces
\begin{alignat}{5}
\label{eq: functional spaces eulerian dus}
\mathcal{V}^{\bv{u}_{\s}}_{0}
&\coloneqq
\bigl\{&
\bv{u}_{\s} &\in L^{2}(B_{\s})^{d}\,& &\big|\,& \grad \bv{u}_{\s} &\in L^{\infty}(B_{\s})^{d\times d},\,& \bv{u}_{\s} &= \bv{0}~\text{on $\Gamma_{\s}^{D}$}
\bigr\},
\\
\label{eq: functional spaces eulerian dvs}
\mathcal{V}^{\bv{v}_{\s}}_{0}
&\coloneqq
\bigl\{&
\bv{v}_{\s} &\in L^{2}(B_{\s})^{d}\,& &\big|\,& \grad \bv{v}_{\s} &\in L^{2}(B_{\s})^{d\times d},\,& \bv{v}_{\s} &= \bv{0}~\text{on $\Gamma_{\s}^{D}$}
\bigr\}.
\end{alignat}

\begin{remark}[Functional space for the filtration velocity]
Referring to Eq.~\eqref{eq: filtration def}, identifying a space for the filtration velocity requires a declaration of our expectations on the smoothness of the datum $\xi_{R_{\s}}$.  This will also have consequences on what we can expect for the mass density fields $\rho_{\s}$ and $\rho_{\f}$.  In applications $\xi_{R_{\s}}$ is measured experimentally and for applications in brain mechanics we expect a relatively high level of uncertainty in such measurements.  Here we simply assume that
\begin{equation}
\label{eq: xiRs smoothness}
\xi_{R_{\s}} \in L^{\infty}(B_{\s}),
\end{equation}
which, along with previous assumptions, implies that
\begin{equation}
\label{eq: vflt functional space}
\fv \in \mathcal{V}^{\vf}.
\end{equation}
\end{remark}

\subsection{Functional setting for the time derivatives}
\label{subsec: time drivatives function spaces}
The function spaces for the time derivatives $\partial_{t}\bv{u}_{\s}$, $\partial_{t}\bv{v}_{\s}$, and $\partial_{t}\bv{v}_{\f}$ are not normally taken as elements of $\mathcal{V}^{\bv{u}_{\s}}$, $\mathcal{V}^{\bv{v}_{\s}}$, and $\mathcal{V}^{\bv{v}_{\f}}$, respectively.  Rather, they are typically assumed to be of the same class as the prescribed fields $\bv{b}_{\s}$ and $\bv{b}_{\f}$, the latter taken in $H^{-1}(B_{\s})$.  However, due to the remapping of the governing equation to $B_{\s}$, the regularity of the fields in question cannot be chosen independently of that of the map from $B_{t}$ to $B_{\s}$.  The crucial element of this map is its gradient $\tensor{F}_{\s}^{-1} = (\tensor{I} + \nabla_{\bv{X}_{\s}} \bv{u}_{\s})^{-1}$, which is the justification for the choice of $\mathcal{V}^{\bv{u}_{\s}}$.  Therefore, using the argument presented in \cite{Heltai2012Variational-Imp0}, we select $\partial_{t}\bv{u}_{\s}$, $\partial_{t}\bv{v}_{\s}$, and $\partial_{t}\bv{v}_{\f}$ in appropriate pivot spaces $\mathcal{H}^{\bv{u}_{\s}}$, $\mathcal{H}^{\bv{v}_{\s}}$, $\mathcal{H}^{\bv{v}_{\f}}$ such that
\begin{gather}
\label{eq: pivot us}
\mathcal{V}^{\bv{u}_{\s}}  \subseteq \mathcal{H}^{\bv{u}_{\s}} \subseteq \bigl(\mathcal{H}^{\bv{u}_{\s}}\bigr)^{*} \subseteq \bigl(\mathcal{V}^{\bv{u}_{\s}}\bigr)^{*},
\\
\label{eq: pivot vs}
\mathcal{V}^{\bv{v}_{\s}}  \subseteq \mathcal{H}^{\bv{v}_{\s}} \subseteq \bigl(\mathcal{H}^{\bv{v}_{\s}}\bigr)^{*} \subseteq \bigl(\mathcal{V}^{\bv{v}_{\s}}\bigr)^{*},
\\
\label{eq: pivot vf}
\mathcal{V}^{\bv{v}_{\f}}  \subseteq \mathcal{H}^{\bv{v}_{\f}} \subseteq \bigl(\mathcal{H}^{\bv{v}_{\f}}\bigr)^{*} \subseteq \bigl(\mathcal{V}^{\bv{v}_{\f}}\bigr)^{*},
\end{gather}
where the notation $(\square)^{*}$ is meant to indicate the dual of $\square$.  Again, as discussed in \cite{Heltai2012Variational-Imp0},  when pulled back to $B_{\s}$, the satisfaction of Eq.~\eqref{eq: us vs relation Eulerian} and the use of standard Sobolev inequalities (cf., e.g., \citealp{Evans2010Partial-Differential-0}), allow one to deduce that 
\begin{equation}
\mathcal{V}^{\bv{u}_{\s}}  \subseteq \mathcal{H}^{\bv{u}_{\s}} \subseteq H^{1}(B_{\s}),
\end{equation}
so that the pivot space for $\partial_{t}\bv{u}_{\s}$ can be taken to be $H^{1}(B_{\s})$.

\subsection{Functional setting for the data}
\label{subsec: function spaces for data}
Referring to Eqs.~\eqref{eq: functional spaces eulerian us}--\eqref{eq: functional spaces eulerian vf}, we take the initial conditions as follows:
\begin{equation}
\label{eq: ICs function spaces}
\bv{u}_{\s}^{0} \in \mathcal{V}^{\bv{u}_{\s}},\quad
\bv{v}_{\s}^{0} \in \mathcal{V}^{\bv{v}_{\s}},\quad\text{and}\quad
\bv{v}_{\f}^{0} \in \mathcal{V}^{\bv{v}_{\f}}.
\end{equation}
The body for terms are chosen as follows:
\begin{equation}
\label{eq: bs bf function spaces}
\bv{b}_{\s} \in H^{-1}(B_{\s})
\quad\text{and}\quad
\bv{b}_{\f} \in H^{-1}(B_{\s}).
\end{equation}
Finally, the boundary traction field is chosen as follows:
\begin{equation}
\label{eq: s function spaces}
\bar{\bv{s}} \in H^{-\frac{1}{2}}\bigl(\Gamma_{\s}^{N}\bigr).
\end{equation}

\subsection{Functional setting for fields in Eulerian form}
The function spaces introduced so far have been defined relative to the reference configuration $B_{\s}$, which we also take as the initial configuration.  Combined with the map $\bv{\chi}_{\s}^{-1}$, these spaces can be used for the definition of corresponding spaces of functions with domain $B_{t}$. We will present abstract weak formulations for both the Eulerian and \ac{ALE} frameworks.  This is because the energy estimates developed in the Eulerian framework are more readily related to classic results from continuum mechanics.  Again in the interest of limiting the proliferation of symbols, we will use the same notation for the function spaces supported over $B_{t}$ as for those defined above and supported over $B_{\s}$.  As noted earlier, the ambiguity can be resolved in context.

\subsection{Formulations}
\label{subsec: Stabilization}
\begin{problem}[Abstract Weak Formulation\,---\,Eulerian Framework]
\label{pb abstract weak eulerian}
Given the same data as in Problem~\ref{pb strong eulerian}, find $\bv{u}_{\s} \in \mathcal{V}^{\bv{u}_{\s}}$, $\bv{v}_{\s} \in \mathcal{V}^{\bv{v}_{\s}}$, $\bv{v}_{\f} \in \mathcal{V}^{\bv{v}_{\f}}$, and $p \in \mathcal{V}^{p}$ such that, for all $\test{\bv{u}}_{\s} \in \mathcal{V}^{\bv{v}_{\s}}_{0}$, $\test{\bv{v}}_{\s} \in \mathcal{V}^{\bv{v}_{\s}}_{0}$, $\test{\bv{v}}_{\f} \in \mathcal{V}^{\bv{u}_{\s}}$, and $\test{p} \in \mathcal{V}^{p}$,
\begin{gather}
\label{eq: weak form kinematic consistency}
\int_{B_{t}} \test{\bv{u}}_{\s} \cdot \bigl[(\tensor{I} - \grad \bv{u}_{\s})^{-1} \partial_{t}\bv{u}_{\s} - \bv{v}_{\s} \bigr] = 0
\shortintertext{and}
\label{eq: weak form balance laws}
\begin{aligned}
&
-\int_{B_{t}} \grad \test{p} \cdot \bigl(\xi_{\s} \bv{v}_{\s} + \xi_{\f} \bv{v}_{\f} \bigr) + \int_{\partial B_{t}} \test{p} \, \bv{v}_{\s} \cdot \bv{n}
\\
&
\qquad+\int_{B_{t}} \test{\bv{v}}_{\f} \cdot
\bigl[
\rho_{\f} \bigl( \partial_{t} \bv{v}_{\f} + \bv{v}_{\f} \cdot \grad \bv{v}_{\f} - \bv{b}_{\f}\bigr) +\xi_{\f} \grad p + \xi_{\f}^{2}\frac{\mu_{\f}}{k_{\s}} (\bv{v}_{\f} - \bv{v}_{\s}) \bigr]
\\
&
\qquad+\int_{B_{t}} \test{\bv{v}}_{\s} \cdot
\bigl[
\rho_{\s} (\partial_{t} \bv{v}_{\s} + \bv{v}_{\s} \cdot \grad \bv{v}_{\s} - \bv{b}_{\s})
+ \xi_{\s} \grad p - \xi_{\f}^{2}\frac{\mu_{\f}}{k_{\s}} (\bv{v}_{\f} - \bv{v}_{\s})
\bigr]
\\
&
\qquad+ \int_{B_{t}} \grad \test{\bv{v}}_{\s} \colondot \tensor{T}^{e} -\int_{\Gamma_{t_{\s}}^{N}} \test{\bv{v}}_{\s} \cdot (\bar{\bv{s}} + p \bv{n}) = 0,
\end{aligned}
\end{gather}
where time derivatives are taken in the spaces discussed in Section~\ref{subsec: time drivatives function spaces}.
\end{problem}

\begin{problem}[Abstract Weak Formulation\,---\,\ac{ALE} Framework]
\label{pb abstract weak ALE}
Given the same data as in Problem~\ref{problem: ALE strong}, find $\bv{u}_{\s} \in \mathcal{V}^{\bv{u}_{\s}}$, $\bv{v}_{\s} \in \mathcal{V}^{\bv{v}_{\s}}$, $\bv{v}_{\f} \in \mathcal{V}^{\bv{v}_{\f}}$, and $p \in \mathcal{V}^{p}$ such that, for all $\test{\bv{u}}_{\s} \in \mathcal{V}^{\bv{v}_{\s}}_{0}$, $\test{\bv{v}}_{\s} \in \mathcal{V}^{\bv{v}_{\s}}_{0}$, $\test{\bv{v}}_{\f} \in \mathcal{V}^{\bv{u}_{\s}}$, and $\test{p} \in \mathcal{V}^{p}$,
\begin{gather}
\label{eq: ALE weak form kinematic consistency}
\int_{B_{\s}} \test{\bv{u}}_{\s} \cdot (\partial_{t}\bv{u}_{\s}-\bv{v}_{\s}) = 0,
\shortintertext{and}
\label{eq: ALE weak form balance laws}
\begin{aligned}
&
-\int_{B_{\s}} \Grad \test{p} \cdot \tensor{F}_{\s}^{-1}\bigl[\xi_{R_{\s}} \bv{v}_{\s} + (J_{\s} - \xi_{R_{\s}})\bv{v}_{\f} \bigr]
+\int_{\Gamma_{\s}^{N}} \test{p} \, J_{\s} \tensor{F}^{-1}_{\s} \bv{v}_{\s} \cdot \bv{n}_{\s}
+\int_{\Gamma_{\s}^{D}} \test{p} \, J_{\s} \tensor{F}^{-1}_{\s} \bar{\bv{v}}_{\s} \cdot \bv{n}_{\s}
\\
&
\qquad+\int_{B_{\s}} \test{\bv{v}}_{\f} \cdot
(J_{\s} - \xi_{R_{\s}}) \rho_{\f}^{*} \bigl[ \partial_{t} \bv{v}_{\f} + (\Grad\bv{v}_{\f}) \tensor{F}_{\s}^{-1}(\bv{v}_{\f} - \bv{v}_{\s}) - \bv{b}_{\f}\bigr] 
\\
&
\qquad+\int_{B_{\s}} \test{\bv{v}}_{\f} \cdot
\bigl[(J_{\s} - \xi_{R_{\s}}) \inversetranspose{\tensor{F}}_{\s}\Grad p + (J_{\s} - \xi_{R_{\s}})^{2}\frac{\mu_{\f}}{J_{\s} k_{\s}} (\bv{v}_{\f} - \bv{v}_{\s}) \bigr]
\\
&
\qquad+\int_{B_{\s}} \test{\bv{v}}_{\s} \cdot
\bigl[
\xi_{R_{\s}}\rho_{\s}^{*} (\partial_{t} \bv{v}_{\s} - \bv{b}_{\s})
+ \xi_{R_{\s}} \inversetranspose{\tensor{F}}_{\s}\Grad p - (J_{\s} - \xi_{R_{\s}})^{2}\frac{\mu_{\f}}{J_{\s} k_{\s}} (\bv{v}_{\f} - \bv{v}_{\s})
\bigr]
\\
&
\qquad+ \int_{B_{\s}} \Grad \test{\bv{v}}_{\s} \colondot \tensor{P}^{e} - \int_{\Gamma_{\s}^{N}} \test{\bv{v}}_{\s} \cdot \bigl(\bar{\bv{s}} + J_{\s} p \, \inversetranspose{\tensor{F}}_{\s}\bv{n}_{\s}\bigr) = 0,
\end{aligned}
\end{gather}
where, for $\tensor{A}\colondot\tensor{B}$ denotes the inner product of tensors $\tensor{A}$ and $\tensor{B}$, time derivatives are taken in the spaces discussed in Section~\ref{subsec: time drivatives function spaces}, and where $\bar{\bv{s}}$ is understood to be the prescribed traction field on $\Gamma_{\s}^{N}$.  Clearly, if the \emph{given} $\bar{\bv{s}}$ were to be prescribed on $\bv{u}_{\s}(\Gamma_{\s}^{N})$, i.e., the image of $\Gamma_{\s}^{N}$ under the motion of the solid skeleton, then the field $\bar{\bv{s}}$ in the above equation should be replaced by $J_{\s} \|\inversetranspose{\tensor{F}}_{\s} \bv{n}_{\s}\| \bar{\bv{s}}$.
\end{problem}

\begin{remark}
Problems~\ref{pb abstract weak eulerian} and~\ref{pb abstract weak ALE} can be shown to be equivalent to their respective strong counterparts using standards approaches (see, e.g., \citealp{Brenner2002The-Mathematical-Theory-0} or \citealp{Hughes2000The-Finite-Element-0}).  Furthermore, under the assumption concerning the space $\mathcal{V}^{\bv{u}_{\s}}$ stated in Eq.~\eqref{eq: functional spaces eulerian us}, we have that Problems~\ref{pb abstract weak eulerian} and~\ref{pb abstract weak ALE} are equivalent to each other.
\end{remark}

The formulation in Problem~\ref{pb abstract weak eulerian} can be easily related to energy estimates that match corresponding relations in continuum mechanics.  Specifically, we have the following
\begin{lemma}(Abstract Energy Estimates)
\label{lemma: theorem of power expended}
Given the formulation in Problem~\ref{pb abstract weak eulerian} (or its equivalent form in Problem~\ref{pb abstract weak ALE}), letting $\bv{v}_{\s} \in \mathcal{V}^{\bv{v}_{\s}}_{0}$, and for $\test{\bv{v}}_{\s} = \bv{v}_{\s}$, $\test{\bv{v}}_{\f} = \bv{v}_{\f}$, and $\test{p} = p$,
\begin{equation}
\label{eq: theorem of power expended}
\frac{\nsd{}}{\nsd{t}} (\mathscr{K} + \mathscr{W}) + \mathscr{D}
= \int_{B_{t}} (\bv{v}_{\s} \cdot \rho_{\s} \bv{b}_{\s} + \bv{v}_{\f} \cdot \rho_{\f} \bv{b}_{\f}) 
+ \int_{\Gamma_{t_{\s}}^{N}} \bv{v}_{\s} \cdot \bar{\bv{s}}, 
\end{equation}
where
\begin{equation}
\label{eq: energies}
\mathscr{K} \coloneqq \int_{B_{t}} \tfrac{1}{2} \bigl(\rho_{\s} \|\bv{v}_{\s}\|^{2} + \rho_{\f} \|\bv{v}_{\f}\|^{2}\bigr),
\quad
\mathscr{W} \coloneqq \int_{B_{t}} \Psi(\bv{x},t),
\quad\text{and}\quad
\mathscr{D} \coloneqq \int_{B_{t}} \frac{\mu_{\f}}{k_{\s}} \|\xi_{\f}(\bv{v}_{\f} - \bv{v}_{\s})\|^{2}.
\end{equation}
\end{lemma}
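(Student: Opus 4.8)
The plan is to test Eq.~\eqref{eq: weak form balance laws} with the solution itself, i.e.\ to set $\test{\bv{v}}_{\s}=\vs$, $\test{\bv{v}}_{\f}=\vf$, and $\test{p}=p$ (which is admissible precisely once $\vs\in\mathcal{V}^{\vs}_{0}$, i.e.\ the solid-velocity Dirichlet datum is homogeneous), and then to sort the resulting scalar identity into the four groups that appear in Eq.~\eqref{eq: theorem of power expended}: a kinetic-energy rate, a stored-energy rate, a dissipation, and the external power. First I would dispose of the pressure terms: with $\test{p}=p$ the bulk term $-\int_{B_{t}}\grad p\cdot(\xi_{\s}\vs+\xi_{\f}\vf)$ cancels identically against $\int_{B_{t}}\xi_{\f}\vf\cdot\grad p$ and $\int_{B_{t}}\xi_{\s}\vs\cdot\grad p$ from the fluid and solid momentum blocks, while the two boundary pressure terms $\int_{\partial B_{t}}p\,\vs\cdot\bv{n}$ and $-\int_{\Gamma_{t_{\s}}^{N}}p\,\vs\cdot\bv{n}$ cancel because $\vs=\bv{0}$ on $\Gamma_{t_{\s}}^{D}$. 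What survives is the bulk inertial, drag, and stress power together with the external body-force and traction contributions.

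Next I would extract the kinetic-energy rate. Writing $\rho_{a}\bv{v}_{a}\cdot(\partial_{t}\bv{v}_{a}+\bv{v}_{a}\cdot\grad\bv{v}_{a})=\rho_{a}D_{t_{a}}(\tfrac{1}{2}\|\bv{v}_{a}\|^{2})$ and invoking the balance of mass $\partial_{t}\rho_{a}+\ldiv(\rho_{a}\bv{v}_{a})=0$ underlying Eq.~\eqref{eq: volume fraction balance}, each integrand collapses to $\partial_{t}\kappa_{a}+\ldiv(\kappa_{a}\bv{v}_{a})$ with $\kappa_{a}=\tfrac{1}{2}\rho_{a}\|\bv{v}_{a}\|^{2}$. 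Here lies the \emph{main obstacle}: the control volume $B_{t}$ is transported by the solid motion (the \ac{ALE} map is $\bv{\chi}_{\s}$), not by the fluid, so the Reynolds transport theorem reads $\frac{\nsd{}}{\nsd{t}}\int_{B_{t}}\kappa_{a}=\int_{B_{t}}\partial_{t}\kappa_{a}+\int_{\partial B_{t}}\kappa_{a}\,\vs\cdot\bv{n}$. For the solid phase this matches the integrand exactly, but for the fluid it leaves a residual convective flux $\int_{\partial B_{t}}\kappa_{\f}(\vf-\vs)\cdot\bv{n}$. The crucial step is to observe that this flux vanishes by the impermeability condition~\eqref{eq: boundary impermeability}, so that the two inertial integrals combine into $\frac{\nsd{}}{\nsd{t}}\mathscr{K}$ with $\mathscr{K}$ as in Eq.~\eqref{eq: energies}.

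The stored-energy rate would come from the elastic stress power $\int_{B_{t}}\grad\vs\colondot\tensor{T}^{e}$. I would pull this back to $B_{\s}$, where by work-conjugacy it equals $\int_{B_{\s}}\tensor{P}^{e}\colondot\Grad\vs=\int_{B_{\s}}\tensor{P}^{e}\colondot\partial_{t}\tensor{F}_{\s}$, using $\partial_{t}\tensor{F}_{\s}=\Grad\vs$ in the Lagrangian description. Substituting $\tensor{P}^{e}=2\xi_{R_{\s}}\tensor{F}_{\s}\,\partial W_{\s}/\partial\tensor{C}_{\s}$ from Eq.~\eqref{eq: Piola transform of Te}, together with $\partial_{t}\tensor{C}_{\s}=2\,\transpose{\tensor{F}}_{\s}(\sym\grad\vs)\tensor{F}_{\s}$ and the symmetry of $\partial W_{\s}/\partial\tensor{C}_{\s}$, this reduces to $\int_{B_{\s}}\xi_{R_{\s}}\,(\partial W_{\s}/\partial\tensor{C}_{\s})\colondot\partial_{t}\tensor{C}_{\s}=\frac{\nsd{}}{\nsd{t}}\int_{B_{\s}}\xi_{R_{\s}}W_{\s}$; and since $\xi_{R_{\s}}$ is time-independent and $\int_{B_{t}}\Psi=\int_{B_{\s}}\xi_{R_{\s}}W_{\s}$ by Eq.~\eqref{eq: internal strain energy}, this is exactly $\frac{\nsd{}}{\nsd{t}}\mathscr{W}$.

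Finally I would collect what remains. The two drag contributions $\int_{B_{t}}\xi_{\f}^{2}\tfrac{\mu_{\f}}{k_{\s}}\vf\cdot(\vf-\vs)$ and $-\int_{B_{t}}\xi_{\f}^{2}\tfrac{\mu_{\f}}{k_{\s}}\vs\cdot(\vf-\vs)$ combine into $\int_{B_{t}}\tfrac{\mu_{\f}}{k_{\s}}\|\xi_{\f}(\vf-\vs)\|^{2}=\mathscr{D}$, while the body-force integrals $-\int_{B_{t}}(\rho_{\s}\vs\cdot\bv{b}_{\s}+\rho_{\f}\vf\cdot\bv{b}_{\f})$ and the traction term $-\int_{\Gamma_{t_{\s}}^{N}}\vs\cdot\bar{\bv{s}}$ are moved to the right-hand side. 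Assembling the pieces yields Eq.~\eqref{eq: theorem of power expended}. The only genuinely delicate point is the transport step for the fluid kinetic energy and its reliance on impermeability; the remaining manipulations are routine bookkeeping and the standard hyperelastic work-conjugacy identity.
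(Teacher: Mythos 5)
Your proof is correct and follows essentially the same route as the paper's: test with the solution, cancel the pressure terms (using $\bv{v}_{\s}\in\mathcal{V}^{\bv{v}_{\s}}_{0}$ for the boundary pieces), identify the inertial terms with $\frac{\nsd{}}{\nsd{t}}\mathscr{K}$ via balance of mass and transport, pull the stress power back to $B_{\s}$ to obtain $\frac{\nsd{}}{\nsd{t}}\mathscr{W}$, and collect the drag and external terms. The only difference is one of explicitness: where the paper compresses the kinetic-energy step into a citation of ``the combined application of the balance of mass and the transport theorem,'' you correctly spell out that, because $B_{t}$ is convected by the solid motion, the fluid term leaves a residual boundary flux $\int_{\partial B_{t}}\kappa_{\f}(\bv{v}_{\f}-\bv{v}_{\s})\cdot\bv{n}$ that must be annihilated by the impermeability condition \eqref{eq: boundary impermeability} — a genuine subtlety the paper's proof leaves implicit.
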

\begin{proof}
Setting $\test{\bv{v}}_{\s} = \bv{v}_{\s}$, $\test{\bv{v}}_{\f} = \bv{v}_{\f}$, and $\test{p} = p$, Eq.~\eqref{eq: weak form balance laws} becomes
\begin{multline}
\label{eq: weak form balance laws energy estimate 1}
\int_{B_{t}} 
\bigl[
\bv{v}_{\f} \cdot
\rho_{\f} \bigl( \partial_{t} \bv{v}_{\f} + \bv{v}_{\f} \cdot \grad \bv{v}_{\f}\bigr) + (\bv{v}_{\f} - \bv{v}_{\s}) \cdot \xi_{\f}^{2}\frac{\mu_{\f}}{k_{\s}} (\bv{v}_{\f} - \bv{v}_{\s})
\bigl]
\\
+\int_{B_{t}} \bv{v}_{\s} \cdot
\rho_{\s} (\partial_{t} \bv{v}_{\s} + \bv{v}_{\s} \cdot \grad \bv{v}_{\s})
+ 
\int_{B_{t}} \grad \bv{v}_{\s} \colondot \tensor{T}^{e}
\\
= 
\int_{B_{t}} (\bv{v}_{\s} \cdot \rho_{\s} \bv{b}_{\s} + \bv{v}_{\f} \cdot \rho_{\f} \bv{b}_{\f}) + \int_{\Gamma_{t_{\s}}^{N}} \bv{v}_{\s} \cdot \bar{\bv{s}},
\end{multline}
We now recall that $\partial_{t} \bv{v}_{\f} + \bv{v}_{\f} \cdot \grad \bv{v}_{\f} = D_{t_{\f}} \bv{v}_{\f}$ and $\partial_{t} \bv{v}_{\s} + \bv{v}_{\s} \cdot \grad \bv{v}_{\s} = D_{t_{\s}} \bv{v}_{\s}$, where $D_{t_{\f}} \bv{v}_{\f}$ and $D_{t_{\s}} \bv{v}_{\s}$ are the material time derivatives of $\bv{v}_{\f}$ and $\bv{v}_{\s}$, respectively.  In turn this implies that Eq.~\eqref{eq: weak form balance laws energy estimate 1} can be rewritten as
\begin{multline}
\label{eq: weak form balance laws energy estimate 2}
\int_{B_{t}} 
\bigl[ 
\tfrac{1}{2} \rho_{\s} D_{t_{\s}} (\bv{v}_{\s} \cdot \bv{v}_{\s})
+
\tfrac{1}{2} \rho_{\f} D_{t_{\s}} (\bv{v}_{\f} \cdot \bv{v}_{\f})
\bigr]
+
\int_{B_{t}} \grad \bv{v}_{\s} \colondot \tensor{T}^{e}
+
\int_{B_{t}} \frac{\mu_{\f}}{k_{\s}} \|\xi_{\f}(\bv{v}_{\f} - \bv{v}_{\s})\|^{2}
\\
= 
\int_{B_{t}} (\bv{v}_{\s} \cdot \rho_{\s} \bv{b}_{\s} + \bv{v}_{\f} \cdot \rho_{\f} \bv{b}_{\f}) + \int_{\Gamma_{t_{\s}}^{N}} \bv{v}_{\s} \cdot \bar{\bv{s}},
\end{multline}
By the combined application of the balance of mass and the transport theorem \citep{GurtinFried_2010_The-Mechanics_0}, and using the definition in the first of Eqs.~\eqref{eq: energies}, we have that the first integral in the above equation can be written as follows:
\begin{equation}
\label{eq: weak form balance laws energy estimate 3}
\int_{B_{t}} 
\bigl[ 
\tfrac{1}{2} \rho_{\s} D_{t_{\s}} (\bv{v}_{\s} \cdot \bv{v}_{\s})
+
\tfrac{1}{2} \rho_{\f} D_{t_{\s}} (\bv{v}_{\f} \cdot \bv{v}_{\f})
\bigr]
=
\frac{\nsd{\mathscr{K}}}{\nsd{t}}.
\end{equation}
Next, using Eq.~\eqref{eq: Cauchy stress}, we observe that the second integral in Eq.~\eqref{eq: weak form balance laws energy estimate 2} can be rewritten via a change of variables of integration as
\begin{multline}
\label{eq: weak form balance laws energy estimate 4}
\int_{B_{t}} \grad \bv{v}_{\s} \colondot \tensor{T}^{e} = \int_{B_{\s}}
(\Grad\bv{v}_{\s}) \tensor{F}_{\s}^{-1} \colondot
2 \xi_{R_{\s}} \, \tensor{F}_{\s} \frac{\partial W_{\s}}{\partial \tensor{C}_{\s}} \transpose{\tensor{F}}_{\s}
\\
\Rightarrow\quad
\int_{B_{t}} \grad \bv{v}_{\s} \colondot \tensor{T}^{e}
=
\int_{B_{\s}}
\Grad\bv{v}_{\s} \colondot 2 \xi_{R_{\s}} \, \tensor{F}_{\s} \frac{\partial W_{\s}}{\partial \tensor{C}_{\s}}
\end{multline}
Then, in view of Eq.~\eqref{eq: ALE weak form kinematic consistency},
\begin{equation}
\label{eq: weak form balance laws energy estimate 5}
\Grad \bv{v}_{\s} = \partial_{t} \tensor{F}_{\s}
\quad\text{and}\quad
2 \tensor{F}_{\s} \frac{\partial W_{\s}}{\partial \tensor{C}_{\s}} = \frac{\partial W_{\s}}{\partial \tensor{F}_{\s}},
\end{equation}
we have
\begin{equation}
\label{eq: weak form balance laws energy estimate 6}
\int_{B_{t}} \grad \bv{v}_{\s} \colondot \tensor{T}^{e}
=
\frac{\nsd{}}{\nsd{t}}
\int_{B_{\s}} \xi_{R_{\s}}W_{\s}
= 
\frac{\nsd{}}{\nsd{t}}
\int_{B_{t}} \Psi
\end{equation}
Substituting the results of Eqs.~\eqref{eq: weak form balance laws energy estimate 3} and~\eqref{eq: weak form balance laws energy estimate 6} into Eq.~\eqref{eq: weak form balance laws energy estimate 2}, the claim follows.
\end{proof}

The result in Lemma~\ref{lemma: theorem of power expended} is directly related to a fundamental result in continuum mechanics often referred to as the \emph{theorem of power expended} \citep{Gurtin-CMBook-1981-1}. With this result in hand, we can then prove the stability of the abstract formulation:
\begin{theorem}[Stability of the Abstract Weak Formulation]
\label{theorem: abstract stability}
The (equivalent) weak formulations in Problems~\ref{pb abstract weak eulerian} and~\ref{pb abstract weak ALE} are stable.
\end{theorem}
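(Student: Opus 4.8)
The plan is to build the proof entirely on the abstract energy estimate of Lemma~\ref{lemma: theorem of power expended}, whose right-hand side is the external power input and whose left-hand side carries the three sign-definite functionals $\mathscr{K}$, $\mathscr{W}$, and $\mathscr{D}$. First I would record that $\mathscr{K}\ge 0$, that $\mathscr{D}\ge 0$ since $\mu_{\f}/k_{\s}>0$, and that $\mathscr{W}\ge 0$ once $W$ is normalized so that $W(\tensor{C}_{\s})\ge 0$ — a harmless additive constant, as it leaves $\partial W_{\s}/\partial\tensor{C}_{\s}$, hence $\tensor{P}^{e}$ and the dynamics, unchanged. These observations convert the energy \emph{identity} into the one-sided inequalities that drive the estimate.

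Next I would integrate Eq.~\eqref{eq: theorem of power expended} over an arbitrary $[0,t]\subseteq[0,T]$, obtaining $(\mathscr{K}+\mathscr{W})(t)+\int_{0}^{t}\mathscr{D}=(\mathscr{K}+\mathscr{W})(0)+\int_{0}^{t}\mathscr{P}$, with $\mathscr{P}$ denoting the power input. The heart of the matter is estimating $\mathscr{P}$: each body-force pairing I would dominate, via Cauchy--Schwarz and Young's inequality, by a small multiple of the matching kinetic energy (absorbable into $\mathscr{K}$) plus a data-dependent remainder, and the traction pairing I would treat by first invoking the trace inequality to pass from $\Gamma_{t_{\s}}^{N}$ to the bulk and then splitting in the same fashion. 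Choosing the Young parameter small and absorbing the energy-proportional pieces into the left-hand side leaves an inequality of Gr\"onwall type, to which Gr\"onwall's lemma applies, yielding
\begin{equation}
\label{eq: abstract stability bound}
\sup_{t\in[0,T]}(\mathscr{K}+\mathscr{W})(t)+\int_{0}^{T}\mathscr{D}\,\nsd{t}\le C\Bigl[(\mathscr{K}+\mathscr{W})(0)+\|\bv{b}_{\s}\|^{2}+\|\bv{b}_{\f}\|^{2}+\|\bar{\bv{s}}\|^{2}\Bigr].
\end{equation}
This is the stability bound sought: the instantaneous energy norm and the cumulative dissipation are controlled by the initial energy and the data. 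In the homogeneous case $\bv{b}_{\s}=\bv{b}_{\f}=\bv{0}$, $\bar{\bv{s}}=\bv{0}$, the integrated identity collapses to $(\mathscr{K}+\mathscr{W})(t)+\int_{0}^{t}\mathscr{D}=(\mathscr{K}+\mathscr{W})(0)$, exhibiting stability as monotone decay of total energy.

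The step I expect to be the main obstacle is the coercivity gap foreshadowed in Remark~\ref{remark: filtration}. Because the model carries no viscous (velocity-gradient) stress, the only dissipated quantity is $\|\fv\|_{L^{2}}=\|\xi_{\f}(\vf-\vs)\|_{L^{2}}$, while $\mathscr{K}$ controls only the $L^{2}$ norms of $\vs$ and $\vf$; the energy therefore bounds neither $\grad\vs$ nor $\grad\vf$. Yet the data in Eqs.~\eqref{eq: bs bf function spaces}--\eqref{eq: s function spaces} lie in $H^{-1}(B_{\s})$ and $H^{-\frac{1}{2}}(\Gamma_{\s}^{N})$, so the pairings in $\mathscr{P}$ call for the $H^{1}$ and $H^{\frac{1}{2}}$ velocity norms — most stubbornly the traction term, which demands a trace and hence $\grad\vs\in L^{2}$ no matter how smooth $\bar{\bv{s}}$ is. Consequently the absorption above is not literally admissible for genuinely $H^{-1}$ data. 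I would therefore state the result honestly in the regimes where it actually closes: homogeneous data, where stability is immediate energy decay; body forces restricted to $L^{2}(B_{\s})\subset H^{-1}(B_{\s})$ with the velocities read in their trial spaces $\mathcal{V}^{\bv{v}_{\s}}$, $\mathcal{V}^{\bv{v}_{\f}}$ (so that $\grad\vs\in L^{2}$ renders the trace finite), giving a bound conditional on the $H^{1}$ velocity norm; and, most satisfactorily, the filtration-velocity reformulation, whose stronger coercivity — precisely the advantage advertised in Remark~\ref{remark: filtration} — is what legitimizes the absorption in the inhomogeneous setting.
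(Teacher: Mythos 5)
Your proposal is correct and, in its operative form, coincides with the paper's proof: the paper simply sets $\test{\bv{v}}_{\s} = \bv{v}_{\s}$, $\test{\bv{v}}_{\f} = \bv{v}_{\f}$, $\test{p} = p$ in Lemma~\ref{lemma: theorem of power expended}, assumes pure Dirichlet boundary conditions ($\Gamma_{\s}^{D} = \partial B_{\s}$) and suppresses $\bv{b}_{\s}$ and $\bv{b}_{\f}$, obtaining $\frac{\nsd{}}{\nsd{t}}(\mathscr{K}+\mathscr{W}) + \mathscr{D} = 0$ with $\mathscr{D}\geq 0$ \,---\, which is exactly your homogeneous-data fallback of monotone energy decay. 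Your diagnosis of why the stronger Gr\"onwall-type bound cannot close in general (the dissipation controls only $\|\xi_{\f}(\bv{v}_{\f}-\bv{v}_{\s})\|_{L^{2}}$, no velocity gradients, so the traction pairing and genuinely $H^{-1}$ body-force pairings cannot be absorbed) is sound and explains precisely why the paper restricts its stability claim to this homogeneous, pure-Dirichlet setting.
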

\begin{proof}
Assuming pure Dirichlet boundary conditions, i.e., $\Gamma_{\s}^{D} = \partial B_{\s}$,
letting $\bv{v}_{\s} \in \mathcal{V}^{\bv{v}_{\s}}_{0}$, and for $\test{\bv{v}}_{\s} = \bv{v}_{\s}$, $\test{\bv{v}}_{\f} = \bv{v}_{\f}$, and $\test{p} = p$, and suppressing the external force fields $\bv{b}_{\s}$ and $\bv{b}_{\f}$, Lemma~\ref{lemma: theorem of power expended} implies that
\begin{equation}
\label{eq: stability proof}
\frac{\nsd{}}{\nsd{t}} (\mathscr{K} + \mathscr{W}) + \mathscr{D}
= 0.
\end{equation}
Hence, observing that the $\mathscr{D} \geq 0$, we have that the time rate of change of the total energy of the system is never positive.
\end{proof}

To make the notation more compact, we proceed to reformulate the problem in a (block)-matrix-like notation. For this purpose we adopt the notation in \cite{Heltai2012Variational-Imp0} and defined in the Appendix.
\begin{problem}[\ac{ALE} Framework\,---\,Fluid Velocity Dual Formulation]
\label{dual pb abstract weak ALE}
With the operators in Eqs.~\eqref{aeq: M11 def}--\eqref{eq: M33 check def}, Problem~\ref{pb abstract weak ALE} takes the following form:
\begin{align}
\label{eq: dual ALE weak form us}
\mathcal{M}_{\sad\sap}\partial_{t}\bv{u}_{\s}
-
\mathcal{M}_{\sad\sbp}\bv{v}_{\s}
&= 
\bv{0},
\\
\label{eq: dual ALE weak form vs}
\begin{aligned}[b]
&\mathcal{M}_{\sbd\sbp}(\xi_{R_{\s}})\partial_{t}\bv{v}_{\s}
-
\transpose{\mathcal{B}}_{\sdd\sbp}(\bv{u}_{\s};\xi_{R_{\s}}) p
-
\transpose{\mathcal{D}}_{\scd\sbp}(\bv{u}_{\s}; \xi_{R_{\s}})\bv{v}_{\f}
\\
&\qquad\qquad\;
+
\mathcal{D}_{\sbd\sbp}(\bv{u}_{\s}; \xi_{R_{\s}})\bv{v}_{\s}
+
\mathcal{A}_{\sbd}(\bv{u}_{\s}; \xi_{R_{\s}})
-
\transpose{\mathcal{S}}_{\sdd\sbp}(\bv{u}_{\s})p
\end{aligned}
&= \mathcal{F}_{\sbd}(\xi_{R_{\s}}),
\\
\label{eq: dual ALE weak form vf}
\begin{aligned}[b]
&\mathcal{M}_{\scd\scp}(\bv{u}_{\s};\xi_{R_{\s}})\partial_{t}\bv{v}_{\f}
+
\mathcal{N}_{\scd\scp}(\bv{u}_{\s},\bv{v}_{\f}; \xi_{R_{\s}})\bv{v}_{\f}
-
\mathcal{N}_{\scd\sbp}(\bv{u}_{\s},\bv{v}_{\f}; \xi_{R_{\s}})\bv{v}_{\s}
\\
&\qquad\qquad\;
-
\transpose{\mathcal{B}}_{\sdd\scp}(\bv{u}_{\s};\xi_{R_{\s}}) p
+
\mathcal{D}_{\scd\scp}(\bv{u}_{\s};\xi_{R_{\s}})\bv{v}_{\f}
-
\mathcal{D}_{\scd\sbp}(\bv{u}_{\s};\xi_{R_{\s}})\bv{v}_{\s}
\end{aligned}
&= \mathcal{F}_{\scd}(\bv{u}_{\s}; \xi_{R_{\s}}),
\\
\label{eq: dual ALE weak form p}
\mathcal{B}_{\sdd\sbp}(\bv{u}_{\s};\xi_{R_{\s}}) \bv{v}_{\s}
+
\mathcal{B}_{\sdd\scp}(\bv{u}_{\s};\xi_{R_{\s}}) \bv{v}_{\f}
+
\mathcal{S}_{\sdd\sbp}(\bv{u}_{\s}) \bv{v}_{\s}
&= \mathcal{F}_{\sdd}(\bv{u}_{\s}).
\end{align}
\end{problem}
\begin{remark}[Coercivity of $\mathcal{D}_{\scd\scp}(\bv{u}_{\s};\xi_{R_{\s}})$]
\label{remark: need for fv formulation}
Given its definition in Eq.~\eqref{eq: D33 def}, we have that the operator $\mathcal{D}_{\scd\scp}(\bv{u}_{\s};\xi_{R_{\s}})$ is coercive.  However, its coercivity is ``at the mercy'' of the volume fraction of the fluid.  Specifically, from a practical viewpoint, one should expect that the coercivity of the operator is weaker the lower the fluid volume fraction.  An alternative formulation, intending to circumvent this problem is presented later in the paper.
\end{remark}

\begin{remark}[Some operators coincide in their discrete forms]
\label{remark: matrix operators ids}
Conventional choices of the finite element spaces for the implementation of Problem~\ref{dual pb abstract weak ALE} can guarantee that, in their discrete form, some of the above operators coincide with one another.  For example, using a traditional Galerkin approach and choosing the same interpolation for $\bv{u}_{\s}$ and $\bv{v}_{\s}$, we have that $\mathcal{M}_{\sad\sap}^{h}=\mathcal{M}_{\sad\sbp}^{h}$, where, given an operator $\Xi$, the notation $\Xi^{h}$ denotes the discrete form of $\Xi$.  Similarly, choosing the same interpolation for $\bv{v}_{\f}$ and $\bv{v}_{\s}$, we that $\mathcal{D}_{\scd\scp}^{h} = \mathcal{D}_{\scd\sbp}^{h}$.
\end{remark}

\begin{remark}[Filtration velocity]
\label{remark: filtration velocity calculation}
If the filtration velocity is required in the solution of Problem~\ref{dual pb abstract weak ALE}, it can be recovered in a post-processing step as an $L^{2}$-projection. That is, given the solution of Problem~\ref{dual pb abstract weak ALE}, and referring to Eq.~\eqref{eq: filtration def}, $\fv$  takes on the form: 
\begin{equation}
\label{eq: fv as L2 projection}
\fv = \bigl[\check{\mathcal{M}}_{\scd\scp}(\us; 0)\bigr]^{-1}\bigl(\check{\mathcal{M}}_{\scd\scp}(\us; \xi_{R_{\s}}) \vf - \check{\mathcal{M}}_{\scd\sbp}(\us; \xi_{R_{\s}}) \vs\bigr),
\end{equation}
where $\check{\mathcal{M}}_{\scd\scp}(\us; \xi_{R_{\s}})$ and $\check{\mathcal{M}}_{\scd\sbp}(\us; \xi_{R_{\s}})$ are invertible operators defined in Eq.~\eqref{eq: M33 check def} and~\eqref{eq: M32 check def}, respectively.
\end{remark}

\section{Stabilization for the Quasi-Static Case}
\label{sec: Stabilization}
While the formulation presented so far is stable in the sense illustrated in Theorem~\ref{theorem: abstract stability}, its finite element implementation is expected to suffer from well known pathologies revolving around the Brezzi-Babu\v{s}ka (or $\inf\sup$) condition (cf.\ \citealp{Brezzi1991Mixed-and-Hybrid-0,Brenner2002The-Mathematical-Theory-0}). 
Coercivity loss \citep{Ern2013} can occur when the volume fraction $\xi_{R_{\s}}=0$ or $\xi_{R_{\s}}=1$.  In these 
extreme cases, the well-posedness of the discrete problem is lost.  Our numerical experiments have also indicated that there are numerical issues when $\xi_{R_{\s}}$ approaches said extremes, such as linear solver stagnation, etc.
These issues have been discussed by several authors and stabilization strategies have been suggested, for example, by \cite{Masud2002A-Stabilized-Mixed-0} for the classical Darcy flow problem, and by \cite{Masud2007A-Stabilized-Mixed-0} for the Darcy-Stokes problem.  In both the cited works, the problems considered are linear and do not include inertia effects.  In this paper, we propose an adaptation of the stabilization strategy proposed in \cite{Masud2007A-Stabilized-Mixed-0} to our nonlinear problem with multiple velocity fields, but still for the case when inertia terms are negligible.

\subsection{Adaptation of the strategy in \cite{Masud2002A-Stabilized-Mixed-0}}
\label{subsec: stab Masud and Hughes}
With reference to the momentum balance relations in Eqs.~\eqref{eq: BOM solid} and~\eqref{eq: BOM fluid}, we begin by considering the case in which the inertia of the fluid is negligible, i.e., we let $\rho_{\s} D_{t_{\s}}\bv{v}_{\s} = \bv{0}$ and $\rho_{\f} D_{t_{\f}}\bv{v}_{\f} = \bv{0}$. Then, Eq.~\eqref{eq: BOM fluid} becomes
\begin{equation}
\label{eq: Darcy proper}
- \rho_{\f} \bv{b}_{\f} +\xi_{\f} \grad p + \xi_{\f}^{2}\frac{\mu_{\f}}{k_{\s}} (\bv{v}_{\f} - \bv{v}_{\s}) = \bv{0} \quad \text{in $B_{t}$}.
\end{equation}
Adapting the strategy in \cite{Masud2002A-Stabilized-Mixed-0}, we obtain
a consistent stabilized formulation by adding to our weak form the following term:
\begin{equation}
\label{eq: stabilization term 1}
\int_{B_{t}}
\tfrac{1}{2}
\biggl(
\frac{k_{\s}}{\xi_{\f}\mu_{\f}} \grad\tilde{p}
-
(\tilde{\bv{v}}_{\f} - \tilde{\bv{v}}_{\s})
\biggr)
\cdot \biggl[
- \rho_{\f} \bv{b}_{\f} +\xi_{\f} \grad p + \xi_{\f}^{2}\frac{\mu_{\f}}{k_{\s}} (\bv{v}_{\f} - \bv{v}_{\s})\biggr].
\end{equation}
After pulling the above expression to the computational domain $B_{\s}$ we obtain the following problem:
\begin{problem}[\ac{ALE} Framework\,---\,Fluid Velocity Quasi-Static Stabilized Formulation]
\label{dual pb abstract weak ALE stabilized static}
Given the same data as in Problem~\ref{problem: ALE strong}, and for quasi-static motions, i.e., motions for which the material accelerations are negligible, find $\bv{u}_{\s} \in \mathcal{V}^{\bv{u}_{\s}}$, $\bv{v}_{\s} \in \mathcal{V}^{\bv{v}_{\s}}$, $\bv{v}_{\f} \in \mathcal{V}^{\bv{v}_{\f}}$, and $p \in \mathcal{V}^{p}$ such that
\begin{align}
\label{eq: dual ALE weak form stabilized us}
\mathcal{M}_{\sad\sap}\partial_{t}\bv{u}_{\s}
-
\mathcal{M}_{\sad\sbp}\bv{v}_{\s}
&= \bv{0},
\\
\label{eq: dual ALE weak form stabilized vs}
-
\tfrac{1}{2}
\transpose{\mathcal{B}}_{\sdd\sbp}(\bv{u}_{\s};\xi_{R_{\s}}) p
-
\tfrac{1}{2}
\transpose{\mathcal{D}}_{\scd\sbp}(\bv{u}_{\s};\xi_{R_{\s}})\bv{v}_{\f}
+
\tfrac{1}{2}
\mathcal{D}_{\sbd\sbp}(\bv{u}_{\s};\xi_{R_{\s}})\bv{v}_{\s}
+
\mathcal{A}_{\sbd}(\bv{u}_{\s};\xi_{R_{\s}})
-
\transpose{\mathcal{S}}_{\sdd\sbp}(\bv{u}_{\s})p
&= \tilde{\mathcal{F}}_{\sbd}(\xi_{R_{\s}}),
\\
\label{eq: dual ALE weak form stabilized vf}
\tfrac{1}{2}
\transpose{\mathcal{B}}_{\sdd\scp}(\bv{u}_{\s};\xi_{R_{\s}}) p
+
\tfrac{1}{2}
\mathcal{D}_{\scd\scp}(\bv{u}_{\s};\xi_{R_{\s}})\bv{v}_{\f}
-
\tfrac{1}{2}
\mathcal{D}_{\scd\sbp}(\bv{u}_{\s};\xi_{R_{\s}})\bv{v}_{\s}
&= \tfrac{1}{2} \mathcal{F}_{\scd}(\bv{u}_{\s}),
\\
\label{eq: dual ALE weak form stabilized p}
\tfrac{1}{2}\mathcal{B}_{\sdd\sbp}(\bv{u}_{\s}) \bv{v}_{\s}
+
\tfrac{1}{2}
\mathcal{B}_{\sdd\scp}(\bv{u}_{\s}) \bv{v}_{\f}
+
\mathcal{S}_{\sdd\sbp}(\bv{u}_{\s}) \bv{v}_{\s}
+ \mathcal{K}_{\sdd\sdp}(\bv{u}_{\s}) p
&= \tilde{\mathcal{F}}_{\sdd}(\bv{u}_{\s}),
\end{align}
where the operators $\tilde{\mathcal{F}}_{\sbd}(\bv{u}_{\s};\xi_{R_{\s}})$, $\mathcal{K}_{\sdd\sdp}(\bv{u}_{\s})$, and $\tilde{\mathcal{F}}_{\sdd}(\bv{u}_{\s})$ are defined in Eqs.~\eqref{eq: F4 tilde def}, \eqref{eq: K44 def}, and~\eqref{eq: F2 tilde def}, respectively.  Furthermore, if $\fv \in \mathcal{V}^{\vf}$ is required as part of the solution, this is computed as described in Remark~\ref{remark: filtration velocity calculation}.
\end{problem}
\begin{remark}[Coercivity of the operator $\mathcal{K}_{\sdd\sdp}(\bv{u}_{\s})$]
In view of its definition in Eq.~\eqref{eq: K44 def}, the operator $\mathcal{K}_{\sdd\sdp}(\bv{u}_{\s})$ is, strictly speaking, positive semi-definite, and therefore \emph{not} coercive.
\end{remark}

Now that Problem~\ref{dual pb abstract weak ALE stabilized static} has been stated, we present the following theorem:
\begin{theorem}[Stability of the Formulation in Problem~\ref{dual pb abstract weak ALE stabilized static}]
\label{theorem: stability of stabilized 1}
The quasi-static formulation in Problem~\ref{dual pb abstract weak ALE stabilized static} is stable.
\end{theorem}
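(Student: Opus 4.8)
The plan is to mirror the argument used for Theorem~\ref{theorem: abstract stability}, now applied to the stabilized system, and to show that in the quasi-static regime the strain energy $\mathscr{W}$ serves as a Lyapunov functional. Working in the (equivalent) Eulerian framework, I would take homogeneous pure Dirichlet boundary conditions ($\Gamma_{\s}^{D} = \partial B_{\s}$, $\bar{\bv{v}}_{\s} = \bv{0}$), suppress the body forces $\bv{b}_{\s}$ and $\bv{b}_{\f}$, and choose the test functions equal to the current solution, i.e.\ $\test{\bv{v}}_{\s} = \vs$, $\test{\bv{v}}_{\f} = \vf$, and $\test{p} = p$, while invoking the kinematic consistency in Eq.~\eqref{eq: ALE weak form kinematic consistency} to reproduce the relation $\Grad\vs = \partial_{t}\tensor{F}_{\s}$ used in Eq.~\eqref{eq: weak form balance laws energy estimate 5}.

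The stabilized weak form is the inertia-free version of Eq.~\eqref{eq: weak form balance laws} plus the consistency term in Eq.~\eqref{eq: stabilization term 1}, assembled into a single equation set to zero; by linearity, its evaluation at the solution splits into an ``unstabilized'' contribution and a ``stabilization'' contribution. For the first I would repeat the computation of Lemma~\ref{lemma: theorem of power expended}: the pore-pressure terms arising from the continuity test and from the two momentum equations cancel exactly, the drag terms assemble into $\mathscr{D}$, and the elastic power $\int_{B_{t}}\grad\vs \colondot \tensor{T}^{e}$ becomes $\nsd{\mathscr{W}}/\nsd{t}$ through Eqs.~\eqref{eq: weak form balance laws energy estimate 4}--\eqref{eq: weak form balance laws energy estimate 6}. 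Because the material accelerations are neglected, no kinetic term $\nsd{\mathscr{K}}/\nsd{t}$ appears, so the unstabilized contribution reduces to $\nsd{\mathscr{W}}/\nsd{t} + \mathscr{D}$.

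The heart of the proof is the evaluation of the stabilization term at the solution. Writing the Darcy residual (with body force dropped) as $\xi_{\f}\grad p + \xi_{\f}^{2}(\mu_{\f}/k_{\s})(\vf - \vs)$ and pairing it with $(k_{\s}/(\xi_{\f}\mu_{\f}))\grad p - (\vf - \vs)$, the mixed products cancel identically, owing to the identity $\bigl(k_{\s}/(\xi_{\f}\mu_{\f})\bigr)\,\xi_{\f}^{2}(\mu_{\f}/k_{\s}) = \xi_{\f}$, which matches the coefficient of $\grad p$ in the residual and renders the two cross terms equal and opposite. What survives is $\tfrac{1}{2}\int_{B_{t}} (k_{\s}/\mu_{\f})\|\grad p\|^{2} - \tfrac{1}{2}\mathscr{D}$, the first piece being exactly the contribution of the positive-semidefinite operator $\mathcal{K}_{\sdd\sdp}(\bv{u}_{\s})$. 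Adding the two contributions and setting the sum to zero yields the energy identity
\begin{equation*}
\frac{\nsd{\mathscr{W}}}{\nsd{t}} + \tfrac{1}{2}\mathscr{D} + \tfrac{1}{2}\int_{B_{t}} \frac{k_{\s}}{\mu_{\f}}\,\|\grad p\|^{2} = 0 ,
\end{equation*}
so that, both added terms being nonnegative, $\nsd{\mathscr{W}}/\nsd{t} \leq 0$ and the formulation is stable in the same sense as Theorem~\ref{theorem: abstract stability}.

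I expect the main obstacle to be bookkeeping rather than analysis: one must verify that the factor-$\tfrac{1}{2}$ weights carried by the drag and pressure operators in Eqs.~\eqref{eq: dual ALE weak form stabilized vs}--\eqref{eq: dual ALE weak form stabilized p} combine with the residual term so that the pressure couplings still cancel while the elastic power survives with unit weight, and that the Masud--Hughes weight produces the exact cancellation of the cross terms, which is the single algebraic identity on which the sign of the result depends. A secondary, purely formal point is to carry the argument on $B_{t}$ and then appeal to the stated equivalence of the Eulerian and \ac{ALE} forms, together with the kinematic identity $\partial_{t}\bv{u}_{\s} = \vs$, to license the passage from $\int_{B_{t}}\grad\vs\colondot\tensor{T}^{e}$ to $\nsd{\mathscr{W}}/\nsd{t}$.
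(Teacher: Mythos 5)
Your proposal is correct and takes essentially the same approach as the paper: the paper's proof likewise tests with the solution, applies Lemma~\ref{lemma: theorem of power expended} to the Galerkin part (with no kinetic term in the quasi-static regime), and relies on the Masud--Hughes cross-term cancellation to arrive at $\frac{\nsd{}}{\nsd{t}}\mathscr{W} + |p|_{d}^{2} = -\tfrac{1}{2}\mathscr{D} \leq 0$, where $|p|_{d}^{2}$ is the pressure-gradient term defined in Eq.~\eqref{eq: pd semi norm def}; your energy identity matches this up to an immaterial constant factor on that term, a discrepancy traceable to the paper's own coefficient conventions in Problem~\ref{dual pb abstract weak ALE stabilized static}. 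The only step you omit is the paper's closing observation that under pure Dirichlet data the solution is non-unique, so the zero-mean constraint $\frac{1}{|B_{\s}|}\int_{B_{\s}} p = 0$ is imposed to turn $|p|_{d}$ into a norm, thereby controlling $p$ itself and not merely $\grad p$.
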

\begin{proof}
Assuming that $\Gamma_{\s}^{D} = \partial B_{\s}$, letting $\bv{v}_{\s} \in \mathcal{V}^{\bv{v}_{\s}}_{0}$, $\test{\bv{v}}_{\s} = \bv{v}_{\s}$, $\test{\bv{v}}_{\f} = \bv{v}_{\f}$, and $\test{p} = p$, and suppressing the external force fields $\bv{b}_{\s}$ and $\bv{b}_{\f}$, Lemma~\ref{lemma: theorem of power expended} implies that
\begin{equation}
\label{eq: stabilized 1 stability proof}
\frac{\nsd{}}{\nsd{t}} \mathscr{W} + |p|_{d}^{2} =  -\tfrac{1}{2} \mathscr{D} \leq 0,
\end{equation}
where $|p|_{d}$ is a semi-norm over $\mathcal{V}^{p}$, equivalent to the $H^{1}$-seminorm, defined as follows:
\begin{equation}
\label{eq: pd semi norm def}
|p|_{d}^{2} \coloneqq \prescript{}{(\mathcal{V}^{p})^{*}}{\bigl\langle}
\mathcal{K}_{\sdd\sdp}(\bv{u}_{\s};\xi_{R_{\s}}) p, p
\big\rangle_{\mathcal{V}^{p}} 
\quad
\forall p \in \mathcal{V}^{p}
.
\end{equation}
It is important to note that, similarly to the traditional Navier-Stokes problem, Problem~\ref{lemma: theorem of power expended} does not admit a unique solution under pure Dirichlet boundary conditions for $\us$ (and therefore $\vs$).  As is often done for the Navier-Stokes problem, we restore uniqueness by removing the kernel of the gradient, e.g., adding a scalar constraint on the field $p$.  Specifically, we demand that $p$ satisfy a zero mean constraint:
\begin{equation}
\label{eq: zero mean constraint}
\frac{1}{|B_{\s}|}\int_{B_{\s}} p = 0.
\end{equation}
In this case, $|p|_{d}$ can be taken as a norm and the claim follows.
\end{proof}
\begin{remark}[The above stabilization strategy does not benefit the dynamic problem]
\label{remark: stabilization does not work for dynamics}
If one were to replace the term $-\rho_{\f}\bv{b}_{\f}$ in Eq.~\eqref{eq: stabilization term 1} by the term $\rho_{\f}(\partial_{t}\vf + \vf \cdot \grad\vf - \bv{b}_{\f})$, one obtains a consistent formulation for the fully dynamic problem.  However, one cannot prove the stability of this formulation because the inertia force terms are intrinsically dependent on the system's motion and as such not controllable in the same way that the term $\bv{b}_{\f}$ was in the proof of Theorem~\ref{theorem: stability of stabilized 1}.
\end{remark}

As shown in the result section, the quasi-static formulation was implemented and provides satisfactory results.  Also, the authors verified that, as observed in Remark~\ref{remark: stabilization does not work for dynamics}, there is no concrete beneficial effect from extending the use of the above stabilization technique in the dynamic case.  At the same time, they have observed that an un-stabilized implementation of Problem~\ref{dual pb abstract weak ALE}  in its fully dynamic case does not lead to a useful formulation due to strong difficulties in determining finite element spaces that might satisfy the $\inf\sup$ condition.  A mixed formulation that allows for an easier identification of such spaces is presented in the next Section.

\section{A formulation based on filtration velocity}
\label{section: filtration velocity}
The problem at hand is characterized by five fields of physical interest: $\us$, $\vs$, $\vf$, $\fv$, and $p$.  Hence, depending on the data and on the flow regime, the problem can be formulated in several other ways.  Here we consider one such formulations that includes the filtration velocity as a primary unknown as opposed to being determined in a post-processing step.  In fact, for the quasi-static case of this formulation, it is the fluid velocity that can be considered a ``postprocessing quantity.'' After presenting the formulation in question,  we discuss its stabilization in the quasi-static case and its behavior in the fully dynamic (un-stabilized case).

We begin by revisiting the equations expressing the balance of momentum.  Referring to Eqs.~\eqref{eq: BOM solid} and~\eqref{eq: BOM fluid}, we choose to rewrite the expression of the balance of momentum as follows:
\begin{align}
\label{eq: BOM solid fv}
\bv{0} &=
\rho_{\s} (D_{t_{\s}} \vs - \bv{b}_{\s})
+
\rho_{\f} (D_{t_{\f}} \vf - \bv{b}_{\f})
+ \grad p - \ldiv\tensor{T}^{e} \quad \text{in $B_{t}$},
\\
\label{eq: BOM fluid fv}
\bv{0} &=
\rho_{\f}^{*} (D_{t_{\f}} \vf - \bv{b}_{\f}) + \grad p + \frac{\mu_{\f}}{k_{\s}} \fv \quad \text{in $B_{t}$},
\end{align}
where Eq.~\eqref{eq: BOM solid fv} is obtained by summing Eqs.~\eqref{eq: BOM solid} and~\eqref{eq: BOM fluid} and using the saturation condition, and Eq.~\eqref{eq: BOM fluid fv} is obtained by eliminating the factor $\xi_{R_{\s}}$ from Eq.~\eqref{eq: BOM fluid} and by then using the definition of filtration velocity in Eq.~\eqref{eq: filtration def}.

Using these equations as governing equations, and imitating the derivation of Problem~\ref{dual pb abstract weak ALE}, we then have the following
\begin{problem}[\ac{ALE} Framework\,---\,Filtration Velocity Formulation]
\label{dual pb abstract weak ALE fv}
Given the same data as in Problem~\ref{problem: ALE strong}, find $\bv{u}_{\s} \in \mathcal{V}^{\bv{u}_{\s}}$, $\bv{v}_{\s} \in \mathcal{V}^{\bv{v}_{\s}}$, $\fv \in \mathcal{V}^{\bv{v}_{\f}}$, $\bv{v}_{\f} \in \mathcal{V}^{\bv{v}_{\f}}$, and $p \in \mathcal{V}^{p}$ such that
\begin{align}
\label{eq: fv dual ALE weak form us}
\mathcal{M}_{\sad\sap}\partial_{t}\bv{u}_{\s}
-
\mathcal{M}_{\sad\sbp}\bv{v}_{\s}
&= \bv{0},
\\
\label{eq: fv dual ALE weak form vs}
\begin{aligned}[b]
&\mathcal{M}_{\sbd\sbp}(\xi_{R_{\s}})\partial_{t}\bv{v}_{\s}
+ \mathcal{M}_{\sbd\scp}(\bv{u}_{\s}; \xi_{R_{\s}}) \partial_{t}\vf
\\
&\qquad\qquad\;
+ \mathcal{N}_{\sbd\scp}(\bv{u}_{\s}, \vf; \xi_{R_{\s}}) \vf
- \mathcal{N}_{\sbd\sbp}(\bv{u}_{\s}, \vf; \xi_{R_{\s}}) \vs
\\
&\qquad\qquad\;
-\transpose{\mathcal{B}}_{\sdd\sbp}(\bv{u}_{\s}; 1) p
+
\mathcal{A}_{\sbd}(\bv{u}_{\s};\xi_{R_{\s}})
-
\transpose{\mathcal{S}}_{\sdd\sbp}(\bv{u}_{\s})p
\end{aligned}
&=  \mathcal{F}_{\sbd}(\bv{u}_{\s};\xi_{R_{\s}}) + \tilde{\mathcal{F}}_{\sbd}(\bv{u}_{\s};\xi_{R_{\s}}),
\\
\label{eq: fv dual ALE weak form vf}
\begin{aligned}[b]
  &\mathcal{M}_{\scd\scp}(\us;0) \partial_{t}\vf
+ \mathcal{N}_{\scd\scp}(\us, \vf;0) \vf
- \mathcal{N}_{\scd\sbp}(\us, \vf;0) \vs
\\
&\qquad\qquad\;
+ \transpose{\mathcal{B}}_{\sdd\scp}(\bv{u}_{\s}; 0) p
+ \mathcal{D}_{\scd\scp}(\bv{u}_{\s}; 0) \fv
\end{aligned}
&= \mathcal{F}_{\scd}(\bv{u}_{\s};0),
\\
\label{eq: fv dual ALE weak form p}
\mathcal{B}_{\sdd\sbp}(\bv{u}_{\s};1) \bv{v}_{\s}
+
\mathcal{B}_{\sdd\scp}(\bv{u}_{\s};0) \fv
+
\mathcal{S}_{\sdd\sbp}(\bv{u}_{\s}) \bv{v}_{\s}
&= \mathcal{F}_{\sdd}(\bv{u}_{\s}),
\\
\label{eq: fv dual ALE weak form vf}
\check{\mathcal{M}}_{\scd\scp}(\us; \xi_{R_{\s}}) \vf - \check{\mathcal{M}}_{\scd\sbp}(\us; \xi_{R_{\s}}) \vs - \check{\mathcal{M}}_{\scd\scp}(\us; 0) \fv &= \bv{0},
\end{align}
where the operators $\mathcal{M}_{\sbd\scp}(\bv{u}_{\s}; \xi_{R_{\s}})$, $\mathcal{N}_{\sbd\scp}(\bv{u}_{\s}, \vf; \xi_{R_{\s}})$, and $\mathcal{N}_{\sbd\sbp}(\bv{u}_{\s}, \vf; \xi_{R_{\s}})$ are defined in Eqs.~\eqref{eq: M23 def}, \eqref{eq: N23 def}, and~\eqref{eq: N22 def}, respectively.
\end{problem}
The corresponding quasi-static problem can be written directly in terms of $\us$, $\vs$, $\fv$, and $p$, with $\vf$ computed via an $L^{2}$-projection.  Specifically, we have
\begin{problem}[\ac{ALE} Framework\,---\,Filtration Velocity Quasi-Static Formulation]
\label{dual pb abstract weak ALE fv quasi static}
Given the same data as in Problem~\ref{problem: ALE strong}, find $\bv{u}_{\s} \in \mathcal{V}^{\bv{u}_{\s}}$, $\bv{v}_{\s} \in \mathcal{V}^{\bv{v}_{\s}}$, $\fv \in \mathcal{V}^{\bv{v}_{\f}}$, and $p \in \mathcal{V}^{p}$ such that
%
\begin{align}
\label{eq: fv dual ALE weak form us quasi static}
\mathcal{M}_{\sad\sap}\partial_{t}\bv{u}_{\s}
-
\mathcal{M}_{\sad\sbp}\bv{v}_{\s}
&= \bv{0},
\\
\label{eq: fv dual ALE weak form vs quasi static}
-\transpose{\mathcal{B}}_{\sdd\sbp}(\bv{u}_{\s}; 1) p
+
\mathcal{A}_{\sbd}(\bv{u}_{\s};\xi_{R_{\s}})
-
\transpose{\mathcal{S}}_{\sdd\sbp}(\bv{u}_{\s})p
&=  \mathcal{F}_{\sbd}(\bv{u}_{\s};\xi_{R_{\s}}) + \tilde{\mathcal{F}}_{\sbd}(\bv{u}_{\s};\xi_{R_{\s}}),
\\
\label{eq: fv dual ALE weak form vf  quasi static}
\transpose{\mathcal{B}}_{\sdd\scp}(\bv{u}_{\s}; 0) p
+ \mathcal{D}_{\scd\scp}(\bv{u}_{\s}; 0) \fv
&= \mathcal{F}_{\scd}(\bv{u}_{\s};0),
\\
\label{eq: fv dual ALE weak form p  quasi static}
\mathcal{B}_{\sdd\sbp}(\bv{u}_{\s};1) \bv{v}_{\s}
+
\mathcal{B}_{\sdd\scp}(\bv{u}_{\s};0) \fv
+
\mathcal{S}_{\sdd\sbp}(\bv{u}_{\s}) \bv{v}_{\s}
&= \mathcal{F}_{\sdd}(\bv{u}_{\s}),
\end{align}
where, once the solution to the above problem is available, the field $\vf \in \mathcal{V}^{\bv{v}_{\f}}$ can be recovered as
\begin{equation}
\label{eq: fv dual ALE weak form vf  quasi static}
\vf = \bigl[\check{\mathcal{M}}_{\scd\scp}(\us; \xi_{R_{\s}})\bigr]^{-1}
\bigl(
\check{\mathcal{M}}_{\scd\sbp}(\us; \xi_{R_{\s}}) \vs + \check{\mathcal{M}}_{\scd\scp}(\us; 0) \fv \bigl).
\end{equation}
\end{problem}

Problem~\ref{dual pb abstract weak ALE fv quasi static} can be stabilized using the same technique followed to obtain Problem~\ref{dual pb abstract weak ALE stabilized static}.  That is, referring to Eq.~\eqref{eq: BOM fluid fv}, we add to the formulation of Problem~\ref{dual pb abstract weak ALE fv quasi static} the terms resulting from the pull-back to $B_{\s}$ of the following expression:
\begin{equation}
\label{eq: stabilization term 2}
\int_{B_{t}}
\tfrac{1}{2}
\biggl(
\frac{k_{\s}}{\mu_{\f}} \grad\tilde{p} - \tilde{\bv{v}}_{\text{flt}}
\biggr)
\cdot \biggl[
- \rho_{\f}^{*} \bv{b}_{\f} + \grad p + \frac{\mu_{\f}}{k_{\s}} \fv \biggr].
\end{equation}
Doing so, yields:
\begin{problem}[\ac{ALE} Framework\,---\,Filtration Velocity Quasi-Static Stabilized Formulation]
\label{dual pb abstract weak ALE fv quasi static stabilized}
Given the same data as in Problem~\ref{problem: ALE strong}, find $\bv{u}_{\s} \in \mathcal{V}^{\bv{u}_{\s}}$, $\bv{v}_{\s} \in \mathcal{V}^{\bv{v}_{\s}}$, $\fv \in \mathcal{V}^{\bv{v}_{\f}}$, and $p \in \mathcal{V}^{p}$ such that
%
\begin{align}
\label{eq: fv dual ALE weak form us quasi static stabilized}
\mathcal{M}_{\sad\sap}\partial_{t}\bv{u}_{\s}
-
\mathcal{M}_{\sad\sbp}\bv{v}_{\s}
&= \bv{0},
\\
\label{eq: fv dual ALE weak form vs quasi static stabilized}
-\transpose{\mathcal{B}}_{\sdd\sbp}(\bv{u}_{\s}; 1) p
+
\mathcal{A}_{\sbd}(\bv{u}_{\s};\xi_{R_{\s}})
-
\transpose{\mathcal{S}}_{\sdd\sbp}(\bv{u}_{\s})p
&=  \mathcal{F}_{\sbd}(\bv{u}_{\s};\xi_{R_{\s}}) + \tilde{\mathcal{F}}_{\sbd}(\bv{u}_{\s};\xi_{R_{\s}}),
\\
\label{eq: fv dual ALE weak form vf  quasi static stabilized}
-\tfrac{1}{2}\transpose{\mathcal{B}}_{\sdd\scp}(\bv{u}_{\s}; 0) p
+ \tfrac{1}{2} \mathcal{D}_{\scd\scp}(\bv{u}_{\s}; 0) \fv
&= \tfrac{1}{2} \mathcal{F}_{\scd}(\bv{u}_{\s};0),
\\
\label{eq: fv dual ALE weak form p  quasi static stabilized}
\mathcal{B}_{\sdd\sbp}(\bv{u}_{\s};1) \bv{v}_{\s}
+
\tfrac{1}{2}\mathcal{B}_{\sdd\scp}(\bv{u}_{\s};0) \fv
+
\mathcal{K}_{\sdd\sdp}(\bv{u}_{\s}) p
+
\mathcal{S}_{\sdd\sbp}(\bv{u}_{\s}) \bv{v}_{\s}
&= \mathcal{F}_{\sdd}(\bv{u}_{\s}),
\end{align}
where, once the solution to the above problem is available, the field $\vf \in \mathcal{V}^{\bv{v}_{\f}}$ can be recovered using Eq.~\eqref{eq: fv dual ALE weak form vf  quasi static}.
%
%
\end{problem}
In relation to Problem~\ref{dual pb abstract weak ALE fv quasi static stabilized} we have the following result:
\begin{theorem}[Stability of the Formulation in Problem~\ref{dual pb abstract weak ALE fv quasi static stabilized}]
\label{theorem: stability of stabilized 2}
The quasi-static formulation in Problem~\ref{dual pb abstract weak ALE fv quasi static stabilized} is stable.
\end{theorem}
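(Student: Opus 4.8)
The plan is to mirror the argument used for Theorem~\ref{theorem: stability of stabilized 1}, since Problem~\ref{dual pb abstract weak ALE fv quasi static stabilized} is obtained by applying the same stabilization recipe (now via the term~\eqref{eq: stabilization term 2}) to the filtration-velocity rewriting of the balance laws in Eqs.~\eqref{eq: BOM solid fv}--\eqref{eq: BOM fluid fv}. First I would restrict to pure Dirichlet boundary conditions, $\Gamma_{\s}^{D} = \partial B_{\s}$, take $\bv{v}_{\s} \in \mathcal{V}^{\bv{v}_{\s}}_{0}$, select the test fields equal to the unknowns (that is, $\test{\bv{v}}_{\s} = \bv{v}_{\s}$, test the filtration-velocity equation with $\fv$, and $\test{p} = p$), and suppress the body forces and the boundary traction so that the right-hand data functionals vanish.

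Next I would pair each operator equation of Problem~\ref{dual pb abstract weak ALE fv quasi static stabilized} with its associated field and sum the results. The key structural point is that the pressure-coupling terms cancel in pairs: the contribution $\transpose{\mathcal{B}}_{\sdd\sbp}(\bv{u}_{\s};1)p$ tested against $\bv{v}_{\s}$ annihilates $\mathcal{B}_{\sdd\sbp}(\bv{u}_{\s};1)\bv{v}_{\s}$ tested against $p$; the half-weighted $\transpose{\mathcal{B}}_{\sdd\scp}(\bv{u}_{\s};0)p$ tested against $\fv$ annihilates $\tfrac{1}{2}\mathcal{B}_{\sdd\scp}(\bv{u}_{\s};0)\fv$ tested against $p$; and the skeleton-boundary terms built from $\mathcal{S}_{\sdd\sbp}(\bv{u}_{\s})$ cancel likewise. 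The consistent $\tfrac{1}{2}$ weighting introduced by the stabilization is exactly what makes these cancellations balance, just as in the proof of Theorem~\ref{theorem: stability of stabilized 1}.

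What survives is the elastic power $\langle\mathcal{A}_{\sbd}(\bv{u}_{\s};\xi_{R_{\s}}),\bv{v}_{\s}\rangle$, the filtration dissipation $\tfrac{1}{2}\langle\mathcal{D}_{\scd\scp}(\bv{u}_{\s};0)\fv,\fv\rangle$, and the stabilization contribution $\langle\mathcal{K}_{\sdd\sdp}(\bv{u}_{\s})p,p\rangle$. For the elastic term I would reuse the identification carried out in Lemma~\ref{lemma: theorem of power expended}, Eqs.~\eqref{eq: weak form balance laws energy estimate 4}--\eqref{eq: weak form balance laws energy estimate 6}: using the kinematic consistency $\partial_{t}\bv{u}_{\s} = \bv{v}_{\s}$, so that $\Grad\bv{v}_{\s} = \partial_{t}\tensor{F}_{\s}$, this term equals $\nsd{\mathscr{W}}/\nsd{t}$. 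The dissipation term is nonnegative by the coercivity of $\mathcal{D}_{\scd\scp}(\bv{u}_{\s};0)$ recorded in Remark~\ref{remark: need for fv formulation}, and, because $\fv = \xi_{\f}(\bv{v}_{\f}-\bv{v}_{\s})$ by Eq.~\eqref{eq: filtration def}, it coincides with $\tfrac{1}{2}\mathscr{D}$, where $\mathscr{D}$ is as in Eq.~\eqref{eq: energies}. The stabilization term equals $|p|_{d}^{2}$ with $|p|_{d}$ the seminorm of Eq.~\eqref{eq: pd semi norm def}. Collecting everything yields the same inequality as before,
\begin{equation}
\frac{\nsd{}}{\nsd{t}}\mathscr{W} + |p|_{d}^{2} = -\tfrac{1}{2}\mathscr{D} \le 0,
\end{equation}
so that the rate of change of the system's (elastic) energy is never positive.

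Finally, as in Theorem~\ref{theorem: stability of stabilized 1}, pure Dirichlet data on $\bv{u}_{\s}$ (hence on $\bv{v}_{\s}$) leaves the pressure determined only up to a constant; I would restore uniqueness by imposing the zero-mean constraint~\eqref{eq: zero mean constraint}, under which $|p|_{d}$ is a genuine norm equivalent to the $H^{1}$-seminorm, and the claim follows. I expect the only delicate step to be the bookkeeping of the $\tfrac{1}{2}$ factors together with the $\xi_{R_{\s}}=1$ versus $\xi_{R_{\s}}=0$ arguments in the $\mathcal{B}$ operators, i.e.\ confirming that the pressure-velocity coupling cancels cleanly and that no unbalanced, sign-indefinite term is left behind. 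The essential point, relative to the fluid-velocity formulation, is that here the dissipation is controlled directly through the filtration velocity via $\mathcal{D}_{\scd\scp}(\bv{u}_{\s};0)$, whose coercivity is uniform rather than \emph{at the mercy} of $\xi_{\f}$ (cf.\ Remark~\ref{remark: filtration}).
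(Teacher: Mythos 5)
Your proposal is correct and takes essentially the same approach as the paper: the authors omit this proof, stating only that it follows the same steps as the proof of Theorem~\ref{theorem: stability of stabilized 1}, and your argument is precisely that adaptation, carried out with the correct bookkeeping (pressure--velocity and boundary couplings cancel, the elastic power becomes $\nsd{\mathscr{W}}/\nsd{t}$ via Lemma~\ref{lemma: theorem of power expended}, and the surviving terms give $\frac{\nsd{}}{\nsd{t}}\mathscr{W} + |p|_{d}^{2} = -\tfrac{1}{2}\mathscr{D} \leq 0$, with the zero-mean constraint restoring uniqueness of $p$). Your closing observation that the dissipation is here controlled through $\mathcal{D}_{\scd\scp}(\bv{u}_{\s};0)$, whose coercivity does not degrade with the fluid volume fraction, matches Remark~\ref{remark: one more stabilization consideration}.
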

\begin{proof}
The proof is omitted in that it follows the same steps presented in the proof of Theorem~\ref{theorem: stability of stabilized 1}.
\end{proof}
\begin{remark}[Difference between the Fluid Velocity and Filtration Velocity Formulations]
\label{remark: one more stabilization consideration}
The formulation of Problems~\ref{dual pb abstract weak ALE fv}--\ref{dual pb abstract weak ALE fv quasi static stabilized} has some important differences relative to the formulations introduced earlier.  Specifically, we note that various operators in Problems~\ref{dual pb abstract weak ALE fv}--\ref{dual pb abstract weak ALE fv quasi static stabilized} no longer depend on the solid's referential volume fraction.  In particular, we note that the coercivity of the operator $\mathcal{D}_{\scd\scp}$ is unaffected by said volume fraction.  This indicates that one should expect in the corresponding \ac{FE} implementation a somewhat more robust behavior of the filtration velocity formulation when it comes to accuracy as a function of $\xi_{R_{\s}}$.
\end{remark}

\section{Discrete approximation}
The abstract formulations were approximated by defining a triangulation $B_{\s_{h}}$ with diameter $h$ of the domain $B_{\s}$ into closed cells $K$ (triangle or quadrilaterals in 2D, and tetrahedra or hexahedra in 3D) such that
\begin{enumerate}
\item
$\overline{B_{\s}} = \cup\{ K \in B_{\s_{h}}\}$;

\item
For any two cells $K_{i}, K_{j} \in B_{\s_{h}}$, $K_{i}\cap K_{j}$ consists only of common faces, edges, or vertices;

\item
$B_{\s_{h}}$ respects the decomposition of the boundary in its Neumann and Dirichlet subsets. 
\end{enumerate}
On $B_{\s_{h}}$ we define the finite dimensional subspaces $\mathcal{V}^{\bv{u}_{\s}}_{h} \subset \mathcal{V}^{\bv{u}_{\s}}$, $\mathcal{V}^{\bv{v}_{\s}}_{h} \subset \mathcal{V}^{\bv{v}_{\s}}$, $\mathcal{V}^{\bv{v}_{\f}}_{h} \subset \mathcal{V}^{\bv{v}_{\f}}$, and $\mathcal{V}^{p}_{h} \subset \mathcal{V}^{p}$ as
\begin{alignat}{2}
\label{eq: FESpace us}
\mathcal{V}^{\bv{u}_{\s}}_{h}
&\coloneqq 
\left\{
\bv{u}_{\s_{h}} \mid (\bv{u}_{\s_{h}})_{i}|_{K} \in \mathcal{P}_{\us}^{n_{\us}}(K), i = 1, \dots, d, K \in B_{\s_{h}}
\right\}
&~
&\equiv \Span \left( \tilde{\bv{u}}_{\s}^{i} \right)_{i = 1}^{N^{\us}_{h}}
,
\\
\label{eq: FESpace vs}
\mathcal{V}^{\bv{v}_{\s}}_{h}
&\coloneqq 
\left\{
\bv{v}_{\s_{h}} \mid (\bv{v}_{\s_{h}})_{i}|_{K} \in \mathcal{P}_{\vs}^{n_{\vs}}(K), i = 1, \dots, d, K \in B_{\s_{h}}
\right\}
&~
&\equiv \Span \left( \tilde{\bv{v}}_{\s}^{i} \right)_{i = 1}^{N^{\vs}_{h}}
,
\\
\label{eq: FESpace vf}
\mathcal{V}^{\bv{v}_{\f}}_{h}
&\coloneqq 
\left\{
\bv{v}_{\f_{h}} \mid (\bv{v}_{\f_{h}})_{i}|_{K} \in \mathcal{P}_{\vf}^{n_{\vf}}(K), i = 1, \dots, d, K \in B_{\s_{h}}
\right\}
&~
&\equiv \Span \left( \tilde{\bv{v}}_{\f}^{i} \right)_{i = 1}^{N^{\vf}_{h}}
,
\\
\label{eq: FESpace p}
\mathcal{V}^{p}_{h}
&\coloneqq 
\left\{
p_{h} \mid p_{h}|_{K} \in \mathcal{P}_{p}^{n_{p}}(K), K \in B_{\s_{h}}
\right\}
&~
&\equiv \Span \left( \tilde{p}^{\,i} \right)_{i = 1}^{N^{p}_{h}}
,
\end{alignat}
where the notation $\phi|_{K}$ indicates the restriction of the scalar field $\phi$ to the cell $K$, and where $(\bv{w})_{i}$ indicates the $i$-th scalar component of the vector field $\bv{w}$.  Furthermore, the notation $\mathcal{P}_{\phi}^{n_{\phi}}(K)$ indicates the polynomial space of degree $n_{\phi}$ on the cell $K$, $\tilde{\bv{u}}_{\s}^{i}$ is the $i$-th element of a selected basis in $\mathcal{V}^{\bv{u}_{\s}}_{h}$, the latter having dimension $N^{\us}_{h}$, and the remainder of the symbols can be interpreted in a similar manner. 

We note that the chosen finite element spaces are included in the pivot spaces for the time derivatives of the fields in the formulation.  Hence, to avoid a proliferation of symbols, we will use said spaces for both the primary fields and their time rates.

Recalling that our problem is time dependent, to represent, say, $\ush(\bv{X},t)$ we write
\begin{equation}
\label{eq: representation of time dependent fields}
\ush(\bv{X},t) = \sum_{i = 1}^{N^{\us}_{h}} u^{i}(t) \tilde{\bv{u}}_{\s}^{i}(\bv{X}),
\end{equation}
where $u^{i}(t)$ is the time dependent coefficient for the $i$-th base element of $\mathcal{V}^{\bv{u}_{\s}}_{h}$.  This is a very common strategy (see, e.g., \citealp{Hughes2000The-Finite-Element-0}) that allows us to define matrix representations for the operators in Problems~\ref{dual pb abstract weak ALE stabilized static} and~\ref{dual pb abstract weak ALE fv} in the discrete case.  For example, the operator $\mathcal{M}_{\sad\sap}$ defined in Eq.~\eqref{aeq: M11 def} can be represented as a matrix $N^{\us}_{h} \times N^{\us}_{h}$ matrix with $ij$-the element $\mathcal{M}_{\sad\sap}^{ij}$ given by
\begin{equation}
\label{eq: finite dimensional operator example}
\mathcal{M}_{\sad\sap}^{ij} \coloneqq 
\prescript{}{(\mathcal{V}^{\bv{u}_{\s}})^{*}}{\bigl\langle}
\mathcal{M}_{\sad\sap}\tilde{\bv{u}}_{\s}^{i},\tilde{\bv{u}}_{\s}^{j}
\big\rangle_{\mathcal{V}^{\bv{u}_{\s}}},
\end{equation}
where $1 \leq i,j \leq N^{\us}_{h}$.  Extending these considerations to all the operators presented in this paper, the finite dimensional version of the Problems~\ref{dual pb abstract weak ALE stabilized static} and~\ref{dual pb abstract weak ALE fv} is obtained by simply replacing the fields $\us$, $\vs$, $\vf$, $p$, and $\fv$ (along with the corresponding test functions) by their finite dimensional counterparts $\ush$, $\vsh$, $\vfh$, $ph$, and $\fvh$. With this in mind, we then have the following
\begin{theorem}[Semi-discrete strong consistency]
\label{theorem: strong consistency}
For any conforming approximation, i.e., whenever $\mathcal{V}^{\bv{u}_{\s}}_{h} \subset \mathcal{V}^{\bv{u}_{\s}}$, $\mathcal{V}^{\bv{v}_{\s}}_{h} \subset \mathcal{V}^{\bv{v}_{\s}}$, $\mathcal{V}^{\bv{v}_{\f}}_{h} \subset \mathcal{V}^{\bv{v}_{\f}}$, and $\mathcal{V}^{p}_{h} \subset \mathcal{V}^{p}$, the discrete formulations of Problems~\ref{dual pb abstract weak ALE stabilized static}--\ref{dual pb abstract weak ALE fv quasi static stabilized}, are strongly consistent.
\end{theorem}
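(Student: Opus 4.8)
The plan is to combine the standard consistency argument for conforming Galerkin discretizations with the residual-based structure of the stabilization terms; here \emph{strong consistency} means that the exact (strong) solution, when substituted into the discrete equations, satisfies them exactly. I would first dispatch the non-stabilized formulations (Problems~\ref{dual pb abstract weak ALE fv} and~\ref{dual pb abstract weak ALE fv quasi static}). By the Remark following Problem~\ref{pb abstract weak ALE}, the continuous weak formulation is equivalent to the strong form in Problem~\ref{problem: ALE strong}, so the exact solution satisfies the continuous weak equations for \emph{every} admissible test function. Because the approximation is conforming, i.e.\ $\mathcal{V}^{\bv{u}_{\s}}_{h} \subset \mathcal{V}^{\bv{u}_{\s}}$, $\mathcal{V}^{\bv{v}_{\s}}_{h} \subset \mathcal{V}^{\bv{v}_{\s}}$, $\mathcal{V}^{\bv{v}_{\f}}_{h} \subset \mathcal{V}^{\bv{v}_{\f}}$, and $\mathcal{V}^{p}_{h} \subset \mathcal{V}^{p}$, the discrete test functions are a subset of the continuous ones, and the exact solution satisfies each discrete equation \emph{a fortiori}.

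For the stabilized formulations (Problems~\ref{dual pb abstract weak ALE stabilized static} and~\ref{dual pb abstract weak ALE fv quasi static stabilized}) I would additionally show that the added stabilization terms vanish identically at the exact solution. In Eq.~\eqref{eq: stabilization term 1} the bracketed factor $-\rho_{\f}\bv{b}_{\f}+\xi_{\f}\grad p + \xi_{\f}^{2}\tfrac{\mu_{\f}}{k_{\s}}(\vf - \vs)$ is precisely the left-hand side of Eq.~\eqref{eq: Darcy proper}, which the exact solution satisfies almost everywhere; hence the integrand is zero against \emph{any} test-function weight and the added term vanishes. The identical argument applies to Problem~\ref{dual pb abstract weak ALE fv quasi static stabilized} through Eq.~\eqref{eq: stabilization term 2}, whose bracketed factor $-\rho_{\f}^{*}\bv{b}_{\f}+\grad p + \tfrac{\mu_{\f}}{k_{\s}}\fv$ is the left-hand side of Eq.~\eqref{eq: BOM fluid fv}. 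Thus, at the exact solution, each stabilized discrete weak form collapses to the conforming restriction of the consistent base weak form already handled, completing the consistency proof for all of Problems~\ref{dual pb abstract weak ALE stabilized static}--\ref{dual pb abstract weak ALE fv quasi static stabilized}.

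The main point requiring care \,---\, really the only obstacle \,---\, is that the stabilization integrands be well-defined at the exact solution, so that the strong-form residuals may be read almost everywhere and paired with the test-function weights. This follows from the functional setting: with $\grad p,\ \grad\vs,\ \grad\vf \in L^{2}(B_{\s})$ and $\xi_{\f},\ \xi_{R_{\s}} \in L^{\infty}(B_{\s})$, every factor appearing in Eqs.~\eqref{eq: stabilization term 1} and~\eqref{eq: stabilization term 2} lies in $L^{2}(B_{\s})$, so the relevant products are integrable and the vanishing argument is rigorous. Since the statement is semi-discrete, the time derivatives are left unapproximated and impose no further regularity demands, so no additional conditions are needed.
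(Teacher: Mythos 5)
Your proposal is correct and coincides with the approach the paper intends: the paper omits this proof, deferring to the well-established results in \cite{Brenner2002The-Mathematical-Theory-0}, and your argument \,---\, conformity of the discrete spaces for the unstabilized equations, plus the observation that the residual-based terms in Eqs.~\eqref{eq: stabilization term 1} and~\eqref{eq: stabilization term 2} are weighted strong-form residuals of Eqs.~\eqref{eq: Darcy proper} and~\eqref{eq: BOM fluid fv} and hence vanish almost everywhere at the exact (quasi-static) solution \,---\, is exactly that standard argument. One cosmetic remark: your final regularity paragraph overstates what is needed, since the factor $k_{\s}/(\xi_{\f}\mu_{\f})\grad\test{p}$ lies in $L^{2}$ only if $\xi_{\f}$ is bounded away from zero, but this is immaterial because the vanishing argument requires only almost-everywhere finiteness of the weight, and in the pulled-back operators (e.g., $\mathcal{K}_{\sdd\sdp}$ in Eq.~\eqref{eq: K44 def}) the inverse volume fractions cancel anyway.
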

The proof of Theorem~\ref{theorem: strong consistency} is omitted as it mimics very well-established results as can be found in \cite{Brenner2002The-Mathematical-Theory-0}.

\subsection{Semi-discrete stability estimates for the quasi-static formulations}
Under the assumption that $\tensor{F}_{\s}$ is invertible, all of the operators that appear in the  quasi-static version of the problems introduced earlier, both in the fluid velocity and in the filtration velocity formulations, have been defined so as to preserve their properties intact in the passage to the finite dimensional context.  Furthermore, so long as the field $\ush$ is Lipschitz continuous, the expression in Eq.~\eqref{eq: weak form balance laws energy estimate 6} of Lemma~\ref{lemma: theorem of power expended} remains valid in the discrete case.  Therefore we have proven the following result:
\begin{theorem}[Stability for the Quasi-Static Discrete Formulation]
The discrete counterparts of Problems~\ref{dual pb abstract weak ALE stabilized static} and~\ref{dual pb abstract weak ALE fv quasi static stabilized}  are stable.
\end{theorem}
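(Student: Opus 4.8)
The plan is to replay the proofs of Theorems~\ref{theorem: stability of stabilized 1} and~\ref{theorem: stability of stabilized 2} verbatim at the semi-discrete level, the only new ingredient being the conformity of the chosen finite element spaces. Since $\mathcal{V}^{\bv{u}_{\s}}_{h} \subset \mathcal{V}^{\bv{u}_{\s}}$, $\mathcal{V}^{\bv{v}_{\s}}_{h} \subset \mathcal{V}^{\bv{v}_{\s}}$, $\mathcal{V}^{\bv{v}_{\f}}_{h} \subset \mathcal{V}^{\bv{v}_{\f}}$, and $\mathcal{V}^{p}_{h} \subset \mathcal{V}^{p}$, the discrete solution is an admissible trial field and, crucially, is itself an admissible test field. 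This is exactly what licenses the substitution $\test{\bv{v}}_{\s} = \vsh$, $\test{\bv{v}}_{\f} = \vfh$ (or $\fvh$ in the filtration formulation), and $\test{p} = p_{h}$ that drives the energy argument in the continuous setting.

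First I would fix pure Dirichlet conditions, $\Gamma_{\s}^{D} = \partial B_{\s}$, suppress the body force data, and insert these test functions into the discrete counterparts of Problems~\ref{dual pb abstract weak ALE stabilized static} and~\ref{dual pb abstract weak ALE fv quasi static stabilized}. Because every operator appearing in these formulations is defined by restricting the same (bi- or tri-)linear form to the discrete subspaces, its algebraic sign properties---in particular the positive semi-definiteness of $\mathcal{K}_{\sdd\sdp}$ and the nonnegativity of the drag dissipation $\mathscr{D}$---are inherited unchanged. Hence the discrete analogue of Lemma~\ref{lemma: theorem of power expended} holds, and I would arrive at the discrete energy balance $\frac{\nsd{}}{\nsd{t}}\mathscr{W} + |p_{h}|_{d}^{2} = -\tfrac{1}{2}\mathscr{D} \leq 0$, mirroring Eq.~\eqref{eq: stabilized 1 stability proof}.

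The one step that is not purely formal is the reproduction of the stored-energy identity Eq.~\eqref{eq: weak form balance laws energy estimate 6}, namely $\int \grad \vsh \colondot \tensor{T}^{e} = \frac{\nsd{}}{\nsd{t}}\int \Psi$. This identity rests on $\Grad \vsh = \partial_{t}\tensor{F}_{\s}$ together with a change of variables between $B_{t}$ and $B_{\s}$, and therefore requires two things at the discrete level: that the kinematic relation $\vsh = \partial_{t}\ush$ be honored exactly---which, as noted in Remark~\ref{remark: matrix operators ids}, is guaranteed by using the same interpolation for $\bv{u}_{\s}$ and $\bv{v}_{\s}$ so that $\mathcal{M}_{\sad\sap}^{h} = \mathcal{M}_{\sad\sbp}^{h}$---and that $\ush$ be Lipschitz continuous, so that $\tensor{F}_{\s}$ is invertible and the push-forward/pull-back integrals are well defined. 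The latter is ensured by the $L^{\infty}$ control on $\grad \bv{u}_{\s}$ built into $\mathcal{V}^{\bv{u}_{\s}}$. I expect this to be the only genuine obstacle: everything else is inherited by conformity, but the energy identity must be checked to survive discretization, and it does precisely under the Lipschitz assumption flagged in the paragraph preceding the statement.

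Finally, as in the continuous argument, pure Dirichlet data leave the kernel of the gradient undetermined, so uniqueness would fail; I would restore it by imposing the zero-mean constraint Eq.~\eqref{eq: zero mean constraint} on $p_{h}$, under which the seminorm of Eq.~\eqref{eq: pd semi norm def} becomes a norm on $\mathcal{V}^{p}_{h}$. The non-increase of the total stored energy together with this norm control then yields stability, and since the filtration-velocity proof follows the identical template, both formulations are settled simultaneously.
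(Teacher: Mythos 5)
Your proposal is correct and follows essentially the same route as the paper: the paper's own (very terse) proof consists precisely of the observations that, by conformity, the operators preserve their properties intact in the passage to the finite-dimensional context, and that the stored-energy identity in Eq.~\eqref{eq: weak form balance laws energy estimate 6} of Lemma~\ref{lemma: theorem of power expended} remains valid in the discrete case so long as \ush{} is Lipschitz continuous, whereupon the energy arguments of Theorems~\ref{theorem: stability of stabilized 1} and~\ref{theorem: stability of stabilized 2} carry over verbatim. Your additional point that the discrete kinematic relation $\partial_{t}\ush = \vsh$ must hold exactly (guaranteed by equal interpolation of $\bv{u}_{\s}$ and $\bv{v}_{\s}$, cf.\ Remark~\ref{remark: matrix operators ids}) makes explicit a step the paper leaves implicit, but it is an elaboration of the same argument rather than a different one.
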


\section{Numerical results}
The formulations presented in this paper have been implemented in \comsol \citep{COMSOLCite}, which we have used as a \ac{FEM}-specific programming environment.  The implementation has been done via the ``Weak Form PDE'' interface.

As the problem studied in this paper is nonlinear, the performance of a formulation is expected to change somewhat in relation to the specific choice of values in the data of the problem.  In particular, we expect significant changes in response due to different choices of constitutive response function for the solid's elastic response and different choices of the referential volume fraction field $\xi_{R_{\s}}$.  With this in mind, we select a single set of data for our simulations and focus on the analysis of the performance of the formulations themselves and for a range of values of $\xi_{R_{\s}}$.

\subsection{Problem setup}
\subsubsection{Domain and data specification}
We present results based a planar problem.  With reference to Fig.~\ref{fig: UnitSquare},
\begin{figure}[htb]
    \centering
    \includegraphics{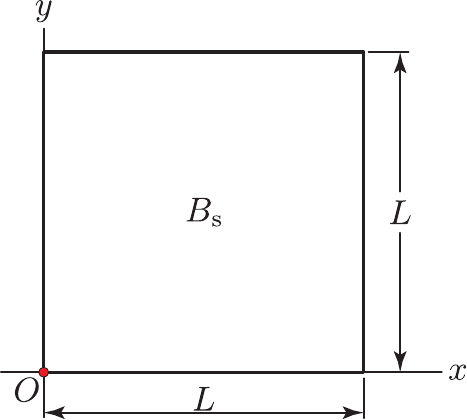}
    \caption{Domain used in the determination of convergence rates.}
    \label{fig: UnitSquare}
\end{figure}
the domain $B_{\s}$ is taken to be a square of side $L = \np[m]{1}$.  We discussed the fact that the coercivity of certain operators is strongly influenced by the volume fraction of the fluid.  For this reason, we select a relatively low value of porosity to focus on this aspect. Specifically, we set the porosity in the reference configuration of the solid to be uniform and equal to 10\%, which implies $\xi_{R_{\s}} = 0.9$.\footnote{This value is certainly a conservative lower bound of what is currently accepted for brain parenchyma (see \citealp{Sykova2008Diffusion-in-Br0} for a thorough discussion; also \citealp{Korogod2015Ultrastructural-Analysis-0}).}  We select the elastic response of the solid skeleton to be neo-Hookean (cf.\ \citealp{Ogden1997Non-Linear-Elastic-0}) with unit shear modulus $G = \np[Pa]{1}$, so that, referring to the second of Eqs.~\eqref{eq: Piola transform of Te}, we have
\begin{equation}
\label{eq: simulations 1st PK}
\tensor{P}^{e} = 0.9 \, G \, \tensor{F}_{\s}.
\end{equation}
The rest of the constitutive parameters are set to unit values, as indicated in Table~\ref{table: constitutive parameters list}.
\begin{table}[ht]
\caption{\label{table: constitutive parameters list}%
Summary of the parameters values in the calculation of convergence rates.%
}
\begin{minipage}{\textwidth}
\centering
\renewcommand{\footnoterule}{\vspace{-7pt}\rule{0pt}{0pt}}
\renewcommand{\arraystretch}{1.2}
\begin{tabular}{l l}
\toprule
Quantity & Value
\\
\midrule
$\rho_{\s}^{*}$ & \np[kg/m^{3}]{1} \\
$\rho_{\f}^{*}$ & \np[kg/m^{3}]{1} \\
$\mu_{\f}$ & \np[Pa\ucdot s]{1} \\
$k_{\s}$ & \np[m^2]{1} \\
$G$ & \np[Pa]{1} \\
\bottomrule
\end{tabular}
\end{minipage}
\end{table}
The analysis of the results is carried out via the Method of Manufactured Solutions \citep{Salari2000Code-Verification-0}.  The chosen manufactured solution is
\begin{align}
\bv{u}_{\s} &= u_{0} \sin(2 \pi t/t_{0}) \biggl[\cos\biggl(2 \pi \frac{x + y}{L}\biggr)\,\ui + \sin\biggl(2 \pi \frac{x - y}{L}\biggr)\,\uj\biggr],
\\
\vs &= \partial_{t} \bv{u}_{\s},
\\
\vf &= v_{0} \cos(2 \pi t/t_{0}) \biggl[\sin\biggl(2 \pi \frac{x^{2} + y^{2}}{L^{2}}\biggr)\,\ui +
\cos\biggl(2 \pi \frac{x^{2} - y^{2}}{L^{2}}\biggr)\,\uj\biggr],
\\
\fv &= (1 - \xi_{R_{\s}}/J_{\s})(\vf-\vs),
\\
p &= p_{0} \sin(2 \pi t/t_{0}) \sin(2 \pi (x + y)/L),
\end{align}
where $u_{0} = \np[m]{0.01}$, $t_{0} = \np[s]{1}$, $v_{0} = \np[m/s]{1}$, $p_{0} = \np[Pa]{1}$, and where $J_{\s} = \det(\tensor{I} + \Grad \bv{u}_{\s})$.

\subsubsection{Solvers and time integration}
The formulations presented earlier yield time-dependent nonlinear \ac{DAE}.  We have adopted the default approach available in \comsol for such equations.  Specifically, the time-dependent aspect is implemented via the method of lines \citep{COMSOLCite,Schiesser1991The-Numerical-Method-0}.  The specific nonlinear solver used was IDAS \citep{Hindmarsh2005SUNDIALS-Suite-0}, implementing a variable-order variable-step-size \ac{BDF}.  The solver provided by IDAS is designed to solve \ac{DAE} systems of the type $F(t,y,y',p) = 0$. The \ac{BDF} method was configured so as to allow orders $1$--$5$ and a maximum time step size of $\np[s]{0.001}$.  For the linear solver we chose PARDISO 5.0.0 \citep{pardiso1,pardiso2,pardiso3}.

\subsubsection{Finite element choice and uniform refinement setup}
\label{subsubsection: interpolation order}
The triangulation over the solution's domain $B_{\s_{h}}$ consisted of triangular cells.  For each scalar component of the problem, the approximation spaces were piecewise Lagrange polynomials. The interpolation order was fixed to $2$ (i.e., second order Lagrange Polynomials) for all fields except the multiplier $p$. For the latter field, we have considered various interpolation orders, which will be indicated on a case by case basis and will be denoted by $\mathcal{P}^{r}_{p}$, where $r$ is the polynomial order of the interpolation.

The convergence rates were computed under uniform refinement of the solution's domain.  The uniformly refined meshes were automatically generated in \comsol by specifying that the values of the minimum and maximum element diameter $h$ be the same.  We note that the convergence rates are not uniform as a function of time, that is, they change somewhat depending on the time instant at which they are computed.  With this in mind, we present results pertaining to two time instants, namely, $t = \np[s]{0.7}$ and $t = \np[s]{1.0}$.  The reason for this choice is that $t = \np[s]{1.0}$ is the end of the time interval considered for the determination of the convergence rates, and $t = \np[s]{0.7}$ is representative of the worst convergence rates we have obtained in our calculations.

\subsection{Fluid velocity quasi-static formulation (Problem~\ref{dual pb abstract weak ALE stabilized static}) results}
Here we present the results for the quasi-static stabilized formulation based on Problem~\ref{dual pb abstract weak ALE stabilized static}.  We note that we are not presenting  results obtained with the corresponding non-stabilized formulation because we encountered severe problems in determining \ac{FE} spaces of Lagrange polynomials that would produce acceptable outcomes, thus justifying the need for stabilization. 

Figure~\ref{fig: Solutions}
\begin{figure}[htb]
    \centering
    \includegraphics{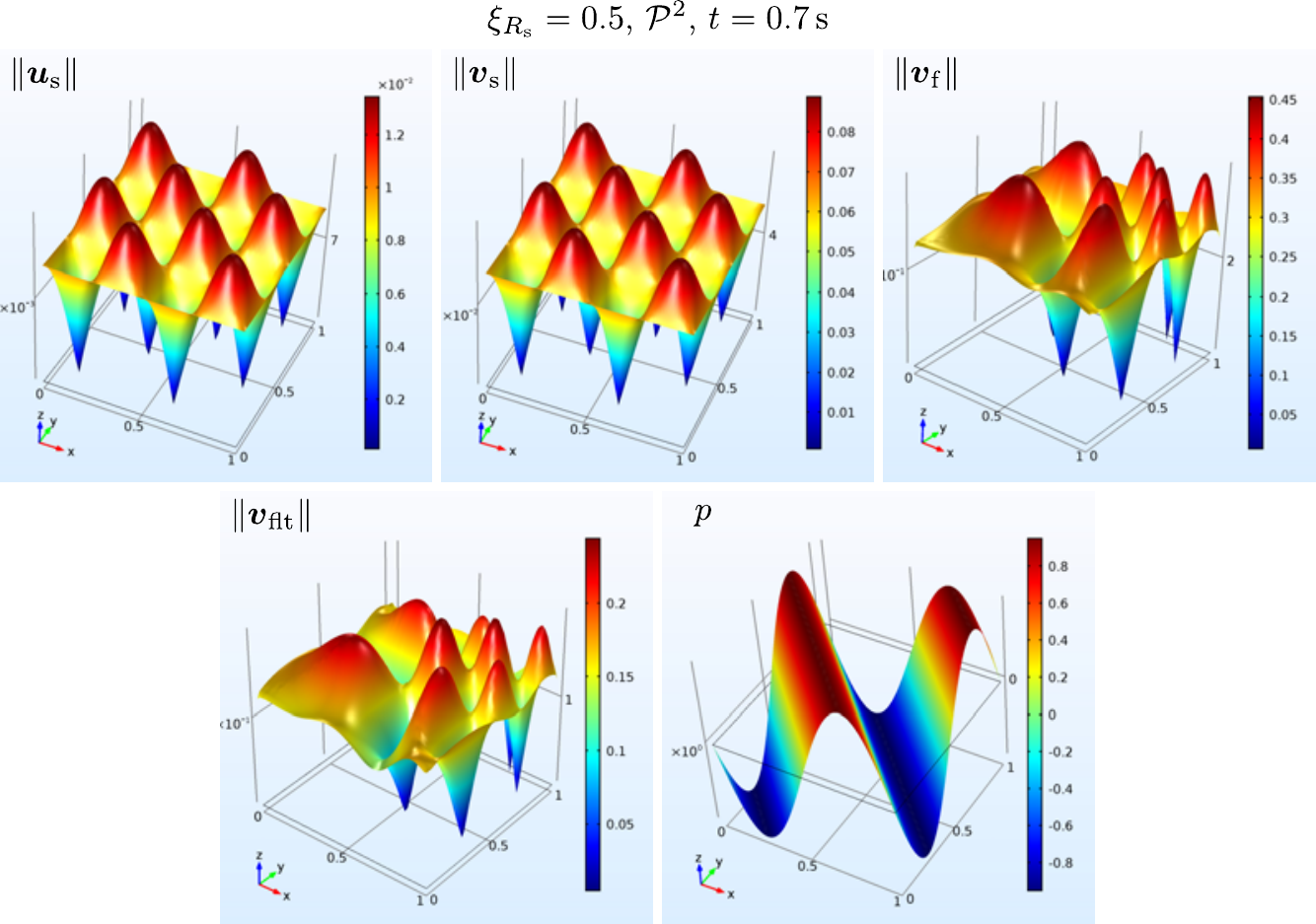}
    \caption{Fields $\|\us\|$, $\|\vs\|$, $\|\vf\|$, $\|\fv\|$, and $p$ at $t = \np[s]{0.7}$ obtained in a calculation using the fluid velocity formulation with $\xi_{R_{\s}} = 0.5$, and $h = (1/64)\,\mathrm{m} = \np[m]{0.015625}$ for a total number of degrees of freedom equal to $\np{187029}$.}
    \label{fig: Solutions}
\end{figure}
shows the magnitude of the fields $\|\us\|$, $\|\vs\|$, $\|\vf\|$, $\|\fv\|$, and $p$ at $t = \np[s]{0.7}$ obtained in a calculation using the stabilized fluid velocity formulation based on Problem~\ref{dual pb abstract weak ALE stabilized static} with $\xi_{R_{\s}} = 0.5$, and $h = (1/64)\,\mathrm{m} = \np[m]{0.015625}$ for a total number of degrees of freedom equal to $\np{187029}$. The mesh used consisted of (essentially) equal size triangles with second order Lagrange polynomials for all fields.  Because all convergence results presented in this paper are based on the same manufactured solution, the appearance of the plots in Fig.~\ref{fig: Solutions} turns out to be visually identical for all solutions regardless of formulation and order of approximation.  Hence, the above plots will not be shown again for other cases.

Figures~\ref{fig: CR xiR 05 p 2 t07} and~\ref{fig: CR xiR 05 p 2 t1}
\begin{figure}[htb]
    \centering
\includegraphics[scale=0.9]{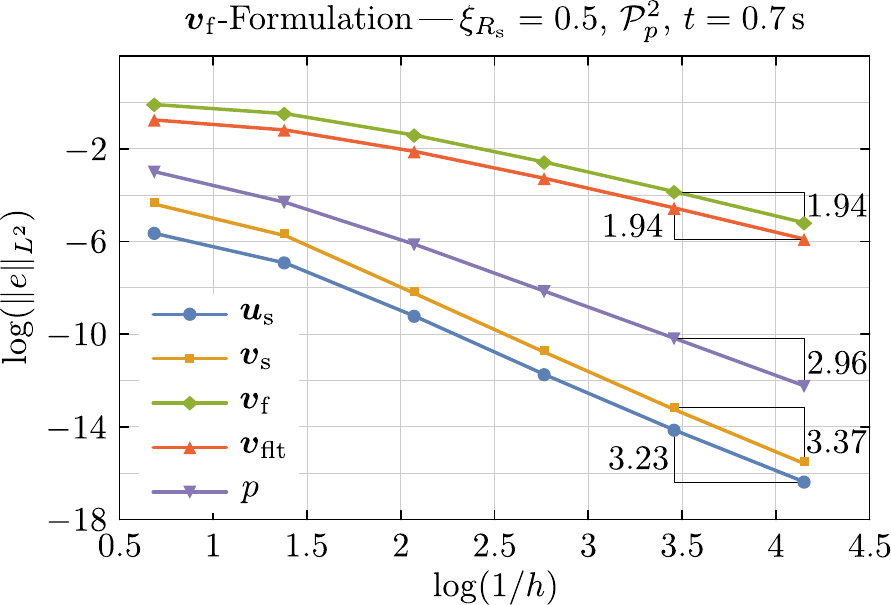}\hfill%
\includegraphics[scale=0.9]{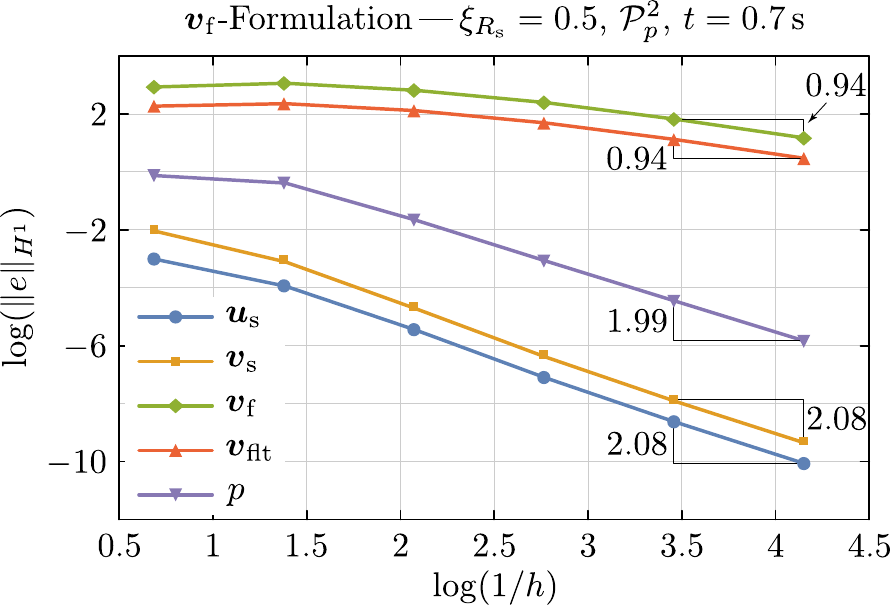}
    \caption{Convergence rates for the $L^{2}$-norm (left) and the $H^{1}$-(semi)norm (right) of the error at $t = \np[s]{0.7}$ obtained via the fluid velocity formulation with $\xi_{R_{\s}} = 0.5$ and $\mathcal{P}_{p}^{2}$.}
    \label{fig: CR xiR 05 p 2 t07}
\end{figure}
\begin{figure}[htb]
    \centering
\hfill%
\includegraphics[scale=0.9]{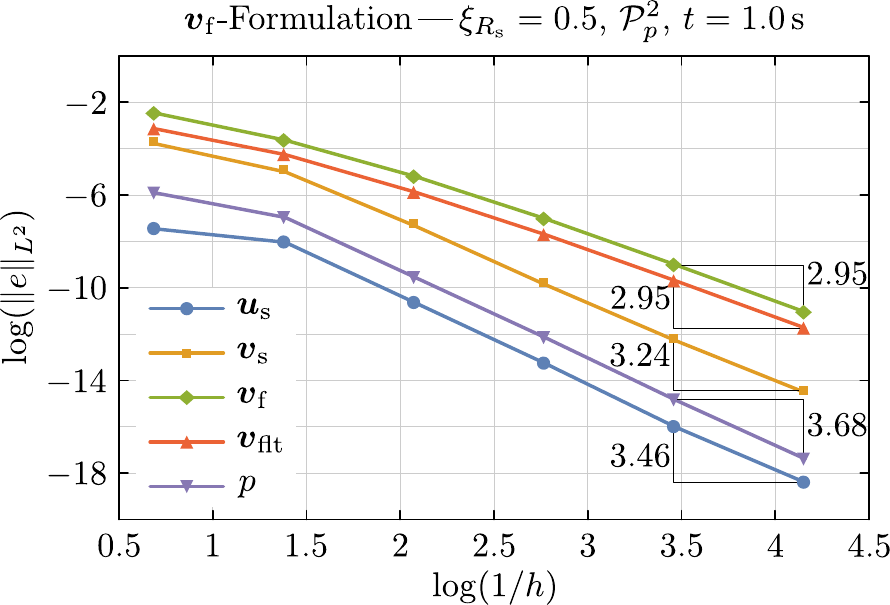}
\includegraphics[scale=0.9]{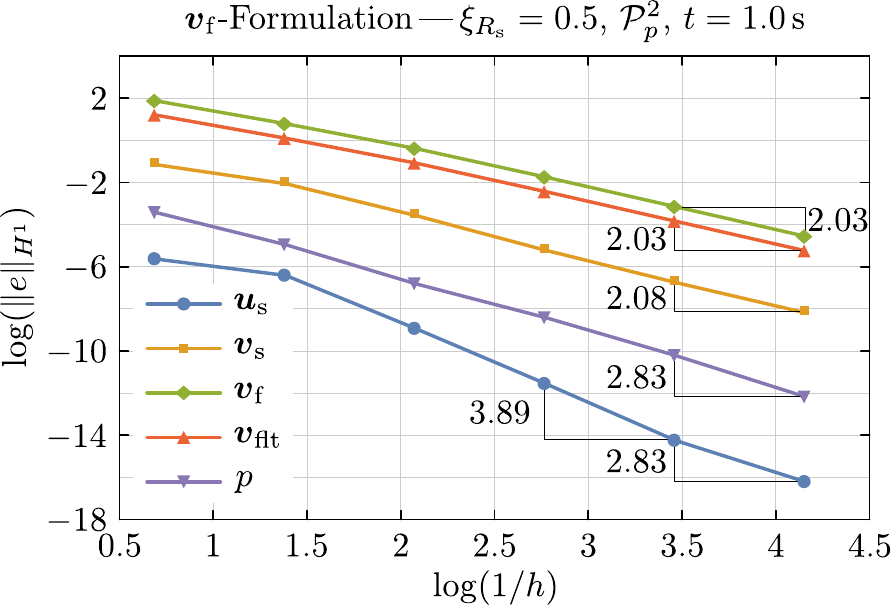}
    \caption{Convergence rates for the $L^{2}$-norm (left) and the $H^{1}$-(semi)norm (right) of the error at $t = \np[s]{1.0}$ obtained via the fluid velocity formulation with $\xi_{R_{\s}} = 0.5$ and $\mathcal{P}_{p}^{2}$.}
    \label{fig: CR xiR 05 p 2 t1}
\end{figure}
show the convergence rates for the case with $\xi_{R_{\s}} = 0.5$ and $\mathcal{P}_{p}^{2}$ (for the other fields see Section~\ref{subsubsection: interpolation order}) for $t = \np[s]{0.7}$ and $t = \np[s]{1.0}$, respectively.  We have reported these rates both in terms of the $L^{2}$ and $H^{1}$ error norms.  However, due to the formulation of the problem, there is no set expectation on the convergence rates of the $H^{1}$ norm for the fields $\vf$ and $\fv$.  As far as the values of the convergence rates are concerned, we refer to the results for the time-independent linear Darcy-flow problem through a rigid porous medium presented by \cite{Masud2002A-Stabilized-Mixed-0}.  We note that the velocity field in the cited work corresponds to the filtration velocity in the present paper.  With this in mind, \cite{Masud2002A-Stabilized-Mixed-0} found that for $k_{\s}/\mu = 1$, and for continuous pressure $6$-node triangles and continuous pressure $9$-node quadrilaterals the convergence rates in terms of the $L^{2}$-norm of $\fv$ approached the optimal value of $2$, and the $L^{2}$- and $H^{1}$-norms of $p$ approached the optimal value of $3$ and $2$, respectively (cf.\ Fig.~17 in \citealp{Masud2002A-Stabilized-Mixed-0}).  With reference to Figs.~\ref{fig: CR xiR 05 p 2 t07} and~\ref{fig: CR xiR 05 p 2 t1}, as well as Tables~\ref{table: CR xiR 05 t07} and~\ref{table: CR xiR 05 t1}, 
\begin{table}[ht]
\caption{\label{table: CR xiR 05 t07}%
Convergence rates for the stabilized fluid velocity formulation for $t = \np[s]{0.7}$ with $\xi_{R_{\s}} = 0.5$ and$\mathcal{P}_{p}^{2}$ corresponding to Fig.~\ref{fig: CR xiR 05 p 2 t07}.%
}
\begin{minipage}{\textwidth}
\centering
\renewcommand{\footnoterule}{\vspace{-7pt}\rule{0pt}{0pt}}
\renewcommand{\arraystretch}{1.2}
\begin{tabular}{c r r r r r r r r}
\toprule
$h$\,(m) & $\|\us\|_{L^{2}}$ & $\|\vs\|_{L^{2}}$ & $\|\vf\|_{L^{2}}$ & $\|\fv\|_{L^{2}}$ & $\|p\|_{L^{2}}$ & $\|\us\|_{H^{1}}$ & $\|\vs\|_{H^{1}}$ & $\|p\|_{H^{1}}$
\\
\midrule
$\tfrac{1}{2} \to \tfrac{1}{4}$ &
$-1.84$ & $-1.95$ & $-0.567$ & $-0.624$ & $-1.90$ & $-1.34$ & $-1.52$ & $-0.372$ \\
$\tfrac{1}{4} \to \tfrac{1}{8}$ &
$-3.33$ & $-3.62$ & $-1.35$ & $-1.34$ & $-2.63$ & $-2.18$ & $-2.33$ & $-1.84$ \\
$\tfrac{1}{8} \to \tfrac{1}{16}$ &
$-3.64$ & $-3.66$ & $-1.68$ & $-1.68$ & $-2.92$ & $-2.38$ & $-2.41$ & $-2.03$ \\
$\tfrac{1}{16} \to \tfrac{1}{32}$ &
$-3.46$ & $-3.57$ & $-1.84$ & $-1.84$ & $-2.93$ & $-2.21$ & $-2.21$ & $-2.00$ \\
$\tfrac{1}{32} \to \tfrac{1}{64}$ &
$-3.23$ & $-3.37$ & $-1.94$ & $-1.94$ & $-2.96$ & $-2.08$ & $-2.08$ & $-1.99$ \\
\bottomrule
\end{tabular}
\end{minipage}
\end{table}
\begin{table}[ht]
\caption{\label{table: CR xiR 05 t1}%
Convergence rates for the stabilized fluid velocity formulation for $t = \np[s]{1.0}$ with $\xi_{R_{\s}} = 0.5$ and$\mathcal{P}_{p}^{2}$ corresponding to Fig.~\ref{fig: CR xiR 05 p 2 t1}.%
}
\begin{minipage}{\textwidth}
\centering
\renewcommand{\footnoterule}{\vspace{-7pt}\rule{0pt}{0pt}}
\renewcommand{\arraystretch}{1.2}
\begin{tabular}{c r r r r r r r r}
\toprule
$h$\,(m) & $\|\us\|_{L^{2}}$ & $\|\vs\|_{L^{2}}$ & $\|\vf\|_{L^{2}}$ & $\|\fv\|_{L^{2}}$ & $\|p\|_{L^{2}}$ & $\|\us\|_{H^{1}}$ & $\|\vs\|_{H^{1}}$ & $\|p\|_{H^{1}}$
\\
\midrule
$\tfrac{1}{2} \to \tfrac{1}{4}$ &
$-0.835$ & $-1.75$ & $-1.68$ & $-1.60$ & $-1.53$ & $-1.13$ & $-1.31$ & $-2.21$ \\
$\tfrac{1}{4} \to \tfrac{1}{8}$ &
$-3.77$ & $-3.37$ & $-2.26$ & $-2.34$ & $-3.73$ & $-3.64$ & $-2.18$ & $-2.68$ \\
$\tfrac{1}{8} \to \tfrac{1}{16}$ &
$-3.78$ & $-3.65$ & $-2.63$ & $-2.64$ & $-3.74$ & $-3.78$ & $-2.38$ & $-2.32$ \\
$\tfrac{1}{16} \to \tfrac{1}{32}$ &
$-3.95$ & $-3.47$ & $-2.86$ & $-2.87$ & $-3.88$ & $-3.89$ & $-2.20$ & $-2.60$ \\
$\tfrac{1}{32} \to \tfrac{1}{64}$ &
$-3.46$ & $-3.24$ & $-2.95$ & $-2.95$ & $-3.68$ & $-2.83$ & $-2.08$ & $-2.83$ \\
\bottomrule
\end{tabular}
\end{minipage}
\end{table}
our results for the ``worst-case'' scenario ($t = \np[s]{0.7}$) behave in a way consistent with results in  \cite{Masud2002A-Stabilized-Mixed-0} and exceed expectations (with respect to the linear problem) for other time instants.  In Tables~\ref{table: CR xiR 05 t07} and~\ref{table: CR xiR 05 t1}, we have omitted $H^{1}$ error norm results for $\vf$ and $\fv$ since no formal results exist for these fields.  As far as the solid displacement and velocity fields are concerned, we have found results that consistently exceed expectations relative to typical estimates for linear elasto-statics and we have often run into somewhat puzzling super-convergent behavior as that shown in Fig.~\ref{fig: CR xiR 05 p 2 t1}.

Figures~\ref{fig: CR xiR 09 p 2 t07} and~\ref{fig: CR xiR 09 p 2 t1}
\begin{figure}[htb]
    \centering
\includegraphics[scale=0.9]{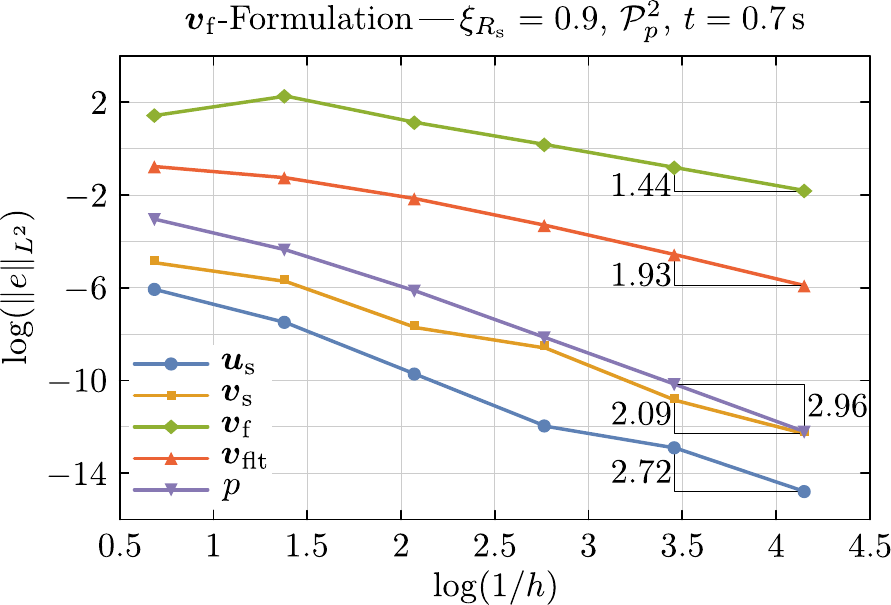}\hfill%
\includegraphics[scale=0.9]{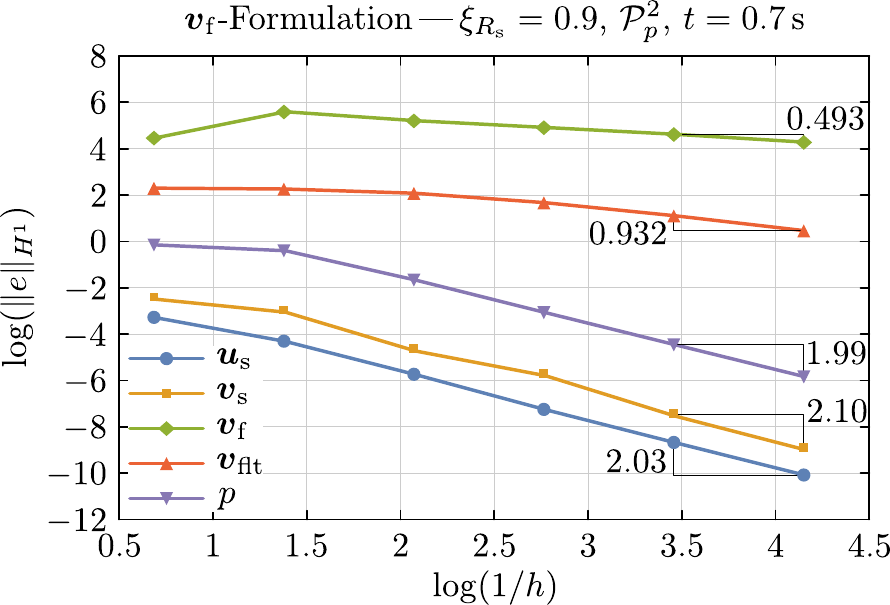}
    \caption{Convergence rates for the $L^{2}$-norm (left) and the $H^{1}$-(semi)norm (right) of the error at $t = \np[s]{0.7}$ obtained via the fluid velocity formulation with $\xi_{R_{\s}} = 0.9$ and $\mathcal{P}_{p}^{2}$.}
    \label{fig: CR xiR 09 p 2 t07}
\end{figure}
\begin{figure}[htb]
    \centering
\hfill%
\includegraphics[scale=0.9]{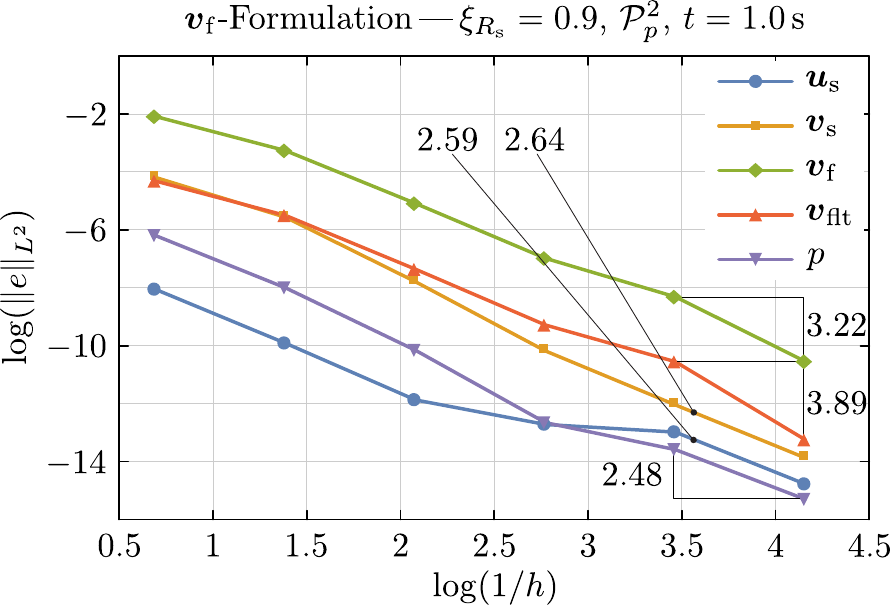}
\includegraphics[scale=0.9]{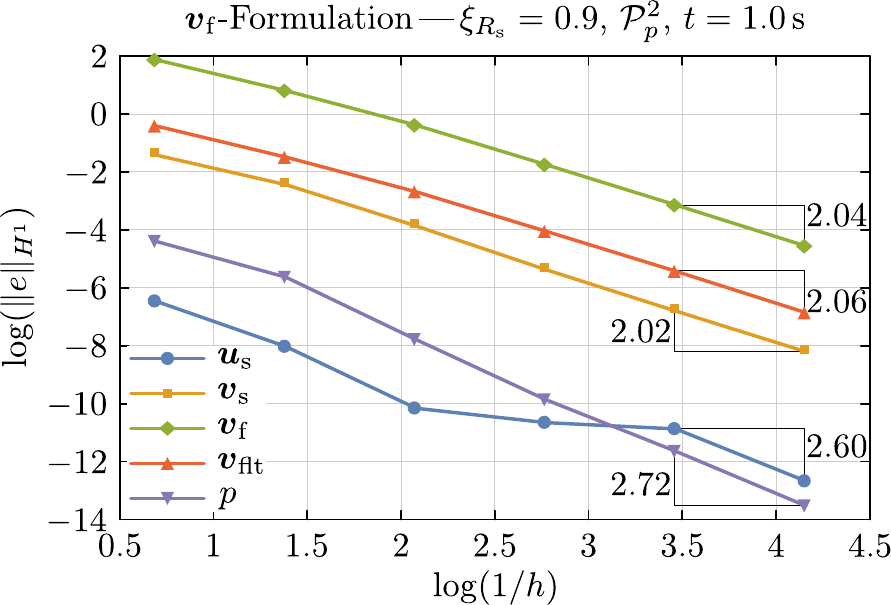}
    \caption{Convergence rates for the $L^{2}$-norm (left) and the $H^{1}$-(semi)norm (right) of the error at $t = \np[s]{1.0}$ obtained via the fluid velocity formulation with $\xi_{R_{\s}} = 0.9$ and $\mathcal{P}_{p}^{2}$.}
    \label{fig: CR xiR 09 p 2 t1}
\end{figure}
as well as Tables~\ref{table: CR xiR 09 t07} and~\ref{table: CR xiR 09 t1}
\begin{table}[ht]
\caption{\label{table: CR xiR 09 t07}%
Convergence rates for the stabilized fluid velocity formulation for $t = \np[s]{0.7}$ with $\xi_{R_{\s}} = 0.9$ and$\mathcal{P}_{p}^{2}$ corresponding to Fig.~\ref{fig: CR xiR 09 p 2 t07}.%
}
\begin{minipage}{\textwidth}
\centering
\renewcommand{\footnoterule}{\vspace{-7pt}\rule{0pt}{0pt}}
\renewcommand{\arraystretch}{1.2}
\begin{tabular}{c r r r r r r r r}
\toprule
$h$\,(m) & $\|\us\|_{L^{2}}$ & $\|\vs\|_{L^{2}}$ & $\|\vf\|_{L^{2}}$ & $\|\fv\|_{L^{2}}$ & $\|p\|_{L^{2}}$ & $\|\us\|_{H^{1}}$ & $\|\vs\|_{H^{1}}$ & $\|p\|_{H^{1}}$
\\
\midrule
$\tfrac{1}{2} \to \tfrac{1}{4}$ &
$-2.05$ & $-1.16$ & $1.20$ & $-0.693$ & $-1.89$ & $-1.48$ & $-0.808$ & $-0.363$ \\
$\tfrac{1}{4} \to \tfrac{1}{8}$ &
$-3.22$ & $-2.87$ & $-1.64$ & $-1.31$ & $-2.56$ & $-2.06$ & $-2.42$ & $-1.82$ \\
$\tfrac{1}{8} \to \tfrac{1}{16}$ &
$-3.23$ & $-1.28$ & $-1.38$ & $-1.66$ & $-2.91$ & $-2.18$ & $-1.54$ & $-2.03$ \\
$\tfrac{1}{16} \to \tfrac{1}{32}$ &
$-1.36$ & $-3.26$ & $-1.43$ & $-1.83$ & $-2.91$ & $-2.06$ & $-2.52$ & $-2.00$ \\
$\tfrac{1}{32} \to \tfrac{1}{64}$ &
$-2.72$ & $-2.09$ & $-1.44$ & $-1.93$ & $-2.96$ & $-2.03$ & $-2.10$ & $-1.99$ \\
\bottomrule
\end{tabular}
\end{minipage}
\end{table}
\begin{table}[ht]
\caption{\label{table: CR xiR 09 t1}%
Convergence rates for the stabilized fluid velocity formulation for $t = \np[s]{1.0}$ with $\xi_{R_{\s}} = 0.9$ and$\mathcal{P}_{p}^{2}$ corresponding to Fig.~\ref{fig: CR xiR 09 p 2 t1}.%
}
\begin{minipage}{\textwidth}
\centering
\renewcommand{\footnoterule}{\vspace{-7pt}\rule{0pt}{0pt}}
\renewcommand{\arraystretch}{1.2}
\begin{tabular}{c r r r r r r r r}
\toprule
$h$\,(m) & $\|\us\|_{L^{2}}$ & $\|\vs\|_{L^{2}}$ & $\|\vf\|_{L^{2}}$ & $\|\fv\|_{L^{2}}$ & $\|p\|_{L^{2}}$ & $\|\us\|_{H^{1}}$ & $\|\vs\|_{H^{1}}$ & $\|p\|_{H^{1}}$
\\
\midrule
$\tfrac{1}{2} \to \tfrac{1}{4}$ &
$-2.68$ & $-1.98$ & $-1.70$ & $-1.71$ & $-2.60$ & $-2.26$ & $-1.48$ & $-1.79$ \\
$\tfrac{1}{4} \to \tfrac{1}{8}$ &
$-2.82$ & $-3.21$ & $-2.63$ & $-2.68$ & $-3.11$ & $-3.07$ & $-2.05$ & $-3.10$ \\
$\tfrac{1}{8} \to \tfrac{1}{16}$ &
$-1.22$ & $-3.45$ & $-2.73$ & $-2.78$ & $-3.60$ & $-0.715$ & $-2.18$ & $-3.00$ \\
$\tfrac{1}{16} \to \tfrac{1}{32}$ &
$-0.381$ & $-2.69$ & $-1.92$ & $-1.83$ & $-1.34$ & $-0.314$ & $-2.06$ & $-2.57$ \\
$\tfrac{1}{32} \to \tfrac{1}{64}$ &
$-2.59$ & $-2.64$ & $-3.22$ & $-3.89$ & $-2.48$ & $-2.60$ & $-2.02$ & $-2.72$ \\
\bottomrule
\end{tabular}
\end{minipage}
\end{table}\afterpage{\clearpage}
show the convergence rates for the case with $\xi_{R_{\s}} = 0.9$, that is, for high concentration of the solid phase.  As can be seen, several of the observations made for the case with $\xi_{R_{\s}} = 0.5$ still apply.  However, it is apparent that the convergence rates are more erratic and have degraded. The degradation of the convergence rates was expected in that the volume fraction $\xi_{R_{\s}}$ appears as a coefficient affecting the coercivity of some operators and it affects limit behaviors as the values $0$ and $1$ are approached.  To gather some information about the formulation's behavior as a function of $\xi_{R_{\s}}$, we have conducted a parametric sweep with $0.25 \leq \xi_{R_{\s}} \leq 0.95$.  This calculation was carried out with $h = (1/32)\,\mathrm{m}$ and the results are shown in Figs.~\ref{fig: Four Field XiBehavior p2 t07} and~\ref{fig: Four Field XiBehavior p2 t1}.
\begin{figure}[htb]
    \centering
\includegraphics[scale=0.9]{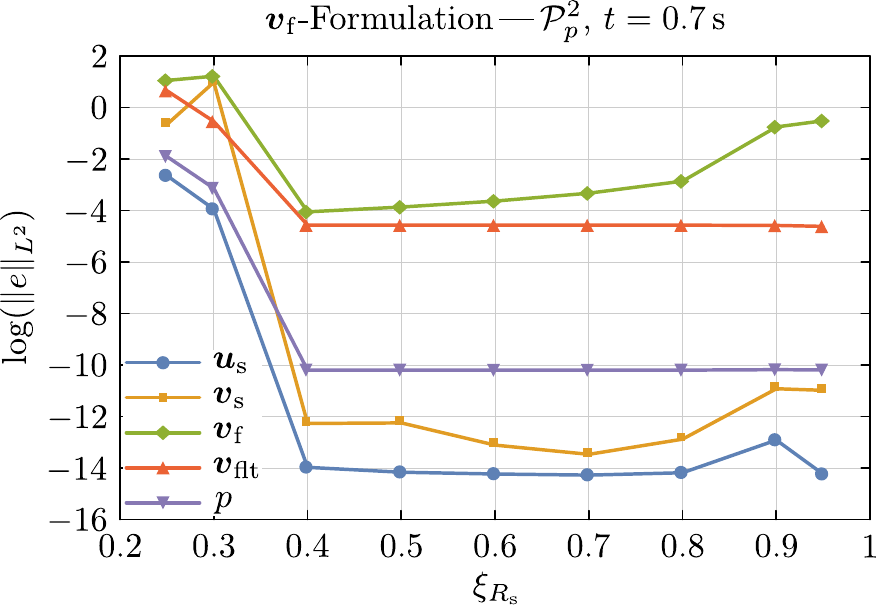}\hfill%
\includegraphics[scale=0.9]{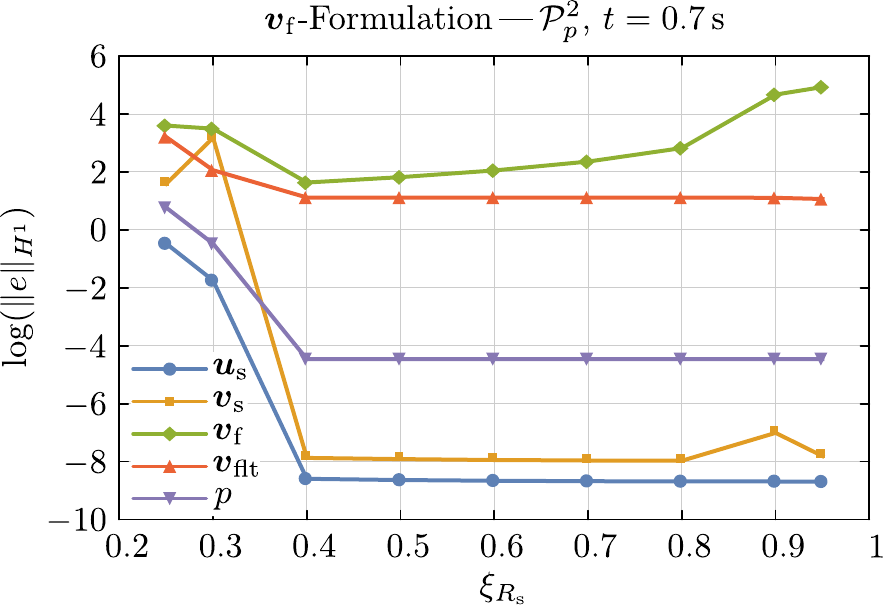}
    \caption{$L^{2}$-norm (left) and the $H^{1}$-(semi)norm (right) of the error at $t = \np[s]{0.7}$ for the fluid velocity formulation as a function of $\xi_{R_{\s}}$.  The element diameter for this simulation was $h = (1/32)\,\mathrm{m}$.}
    \label{fig: Four Field XiBehavior p2 t07}
\end{figure}
\begin{figure}[htb]
    \centering
\hfill%
\includegraphics[scale=0.9]{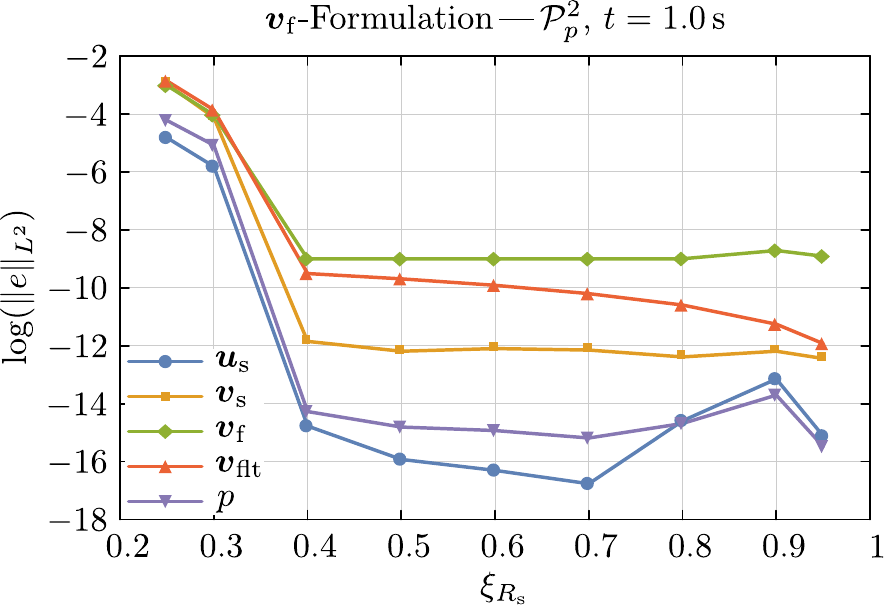}
\includegraphics[scale=0.9]{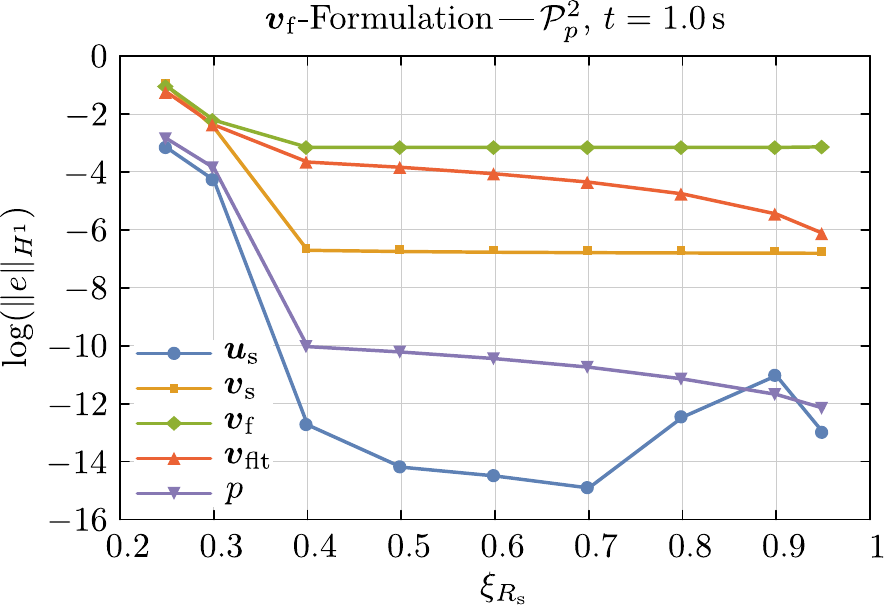}
    \caption{$L^{2}$-norm (left) and the $H^{1}$-(semi)norm (right) of the error at $t = \np[s]{1.0}$ for the fluid velocity formulation as a function of $\xi_{R_{\s}}$.  The element diameter for this simulation was $h = (1/32)\,\mathrm{m}$.}
    \label{fig: Four Field XiBehavior p2 t1}
\end{figure}
These results indicate that the accuracy is highly degraded for all fields for high volume fractions of the fluid.  From a numerical viewpoint, a different formulation would be needed for these cases.  More importantly, from a physical viewpoint, cases with high porosity, i.e., high fluid volume fractions, should be modeled as Brinkman flow problems (cf.\ \citealp{Masud2007A-Stabilized-Mixed-0}). For increasing values of $\xi_{R_{\s}}$, Fig.~\ref{fig: Four Field XiBehavior p2 t07} and~\ref{fig: Four Field XiBehavior p2 t1} show degraded accuracy for the field $\vf$, while the accuracy in terms of the other fields, especially $\fv$ and $p$, appears to be relatively unaffected by increasing values of the solid volume fraction.

For the stabilized quasi-static formulation there are no restrictions induced by the Brezzi-Babu\v{s}ka condition.  To illustrate this point, in Figs.~\ref{fig: Four Field CR p3 t07} and~\ref{fig: Four Field CR p3 t1}
\begin{figure}[htb]
    \centering
\includegraphics[scale=0.9]{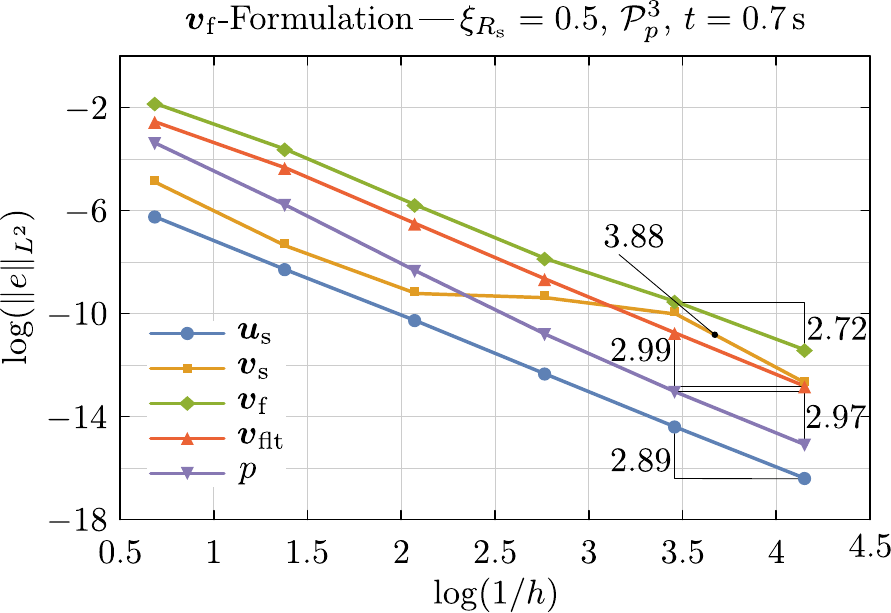}\hfill%
\includegraphics[scale=0.9]{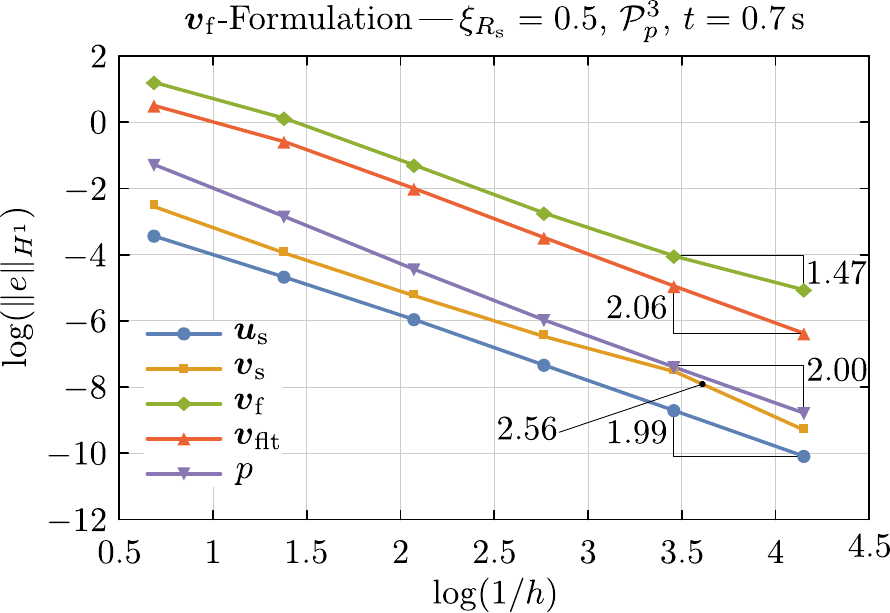}
    \caption{Convergence rates for the $L^{2}$-norm (left) and the $H^{1}$-(semi)norm (right) of the error at $t = \np[s]{0.7}$ obtained via the fluid velocity formulation with $\xi_{R_{\s}} = 0.5$ and $\mathcal{P}_{p}^{3}$.}
    \label{fig: Four Field CR p3 t07}
\end{figure}
\begin{figure}[htb]
    \centering
\hfill%
\includegraphics[scale=0.9]{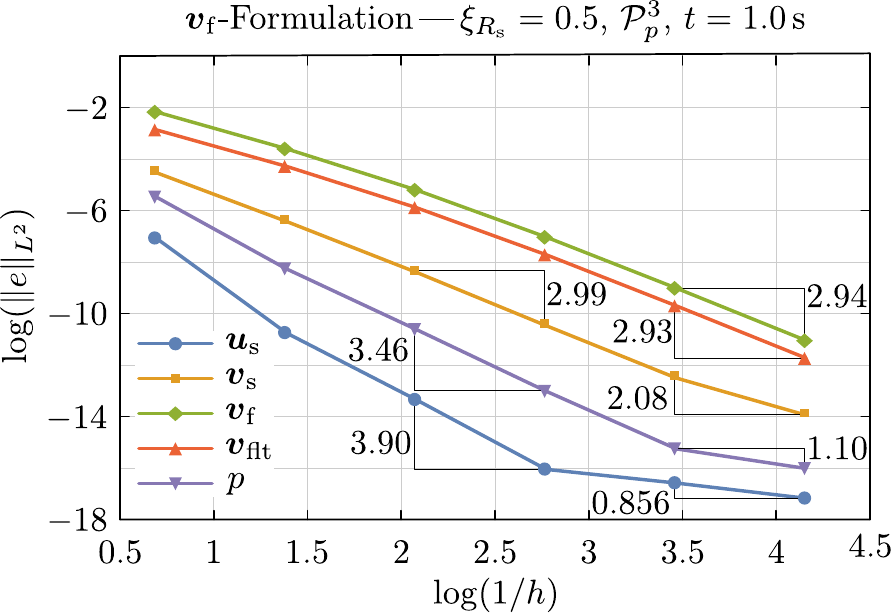}
\includegraphics[scale=0.9]{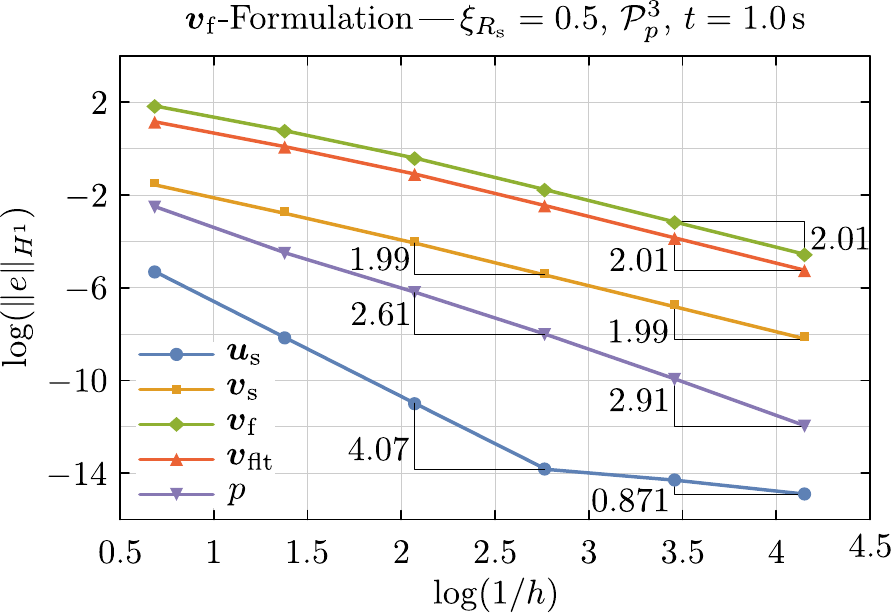}
    \caption{Convergence rates for the $L^{2}$-norm (left) and the $H^{1}$-(semi)norm (right) of the error at $t = \np[s]{1.0}$ obtained via the fluid velocity formulation with $\xi_{R_{\s}} = 0.5$ and $\mathcal{P}_{p}^{3}$.}
    \label{fig: Four Field CR p3 t1}
\end{figure}
we present results with $\xi_{R_{\s}} = 0.5$ in which all fields are interpolated via second order Lagrange polynomials except $p$, which is interpolated using cubic Lagrange polynomials.  In addition to illustrating the possibility of choosing arbitrary solution spaces, the result in question reveals an interesting effect. While the observed convergence rates are not easily explained and require a formal analysis, it appears that by increasing the order of interpolation of $p$ has a beneficial effect on the order of convergence of of the fields $\vf$ and $\fv$ without negatively affecting the behavior of the fields $\us$ and $\vs$ (in fact, somewhat positive). If confirmed, this result suggests that one might gain almost a full order of convergence in the vector fields $\vf$ and $\fv$ at the relatively moderate cost of increasing the degrees of freedom of a scalar field.

\subsection{Filtration velocity quasi-static formulation (Problem~\ref{dual pb abstract weak ALE fv quasi static stabilized}) results}
The convergence results for the quasi-static problem solved using the stabilized filtration velocity formulation in Problem~\ref{dual pb abstract weak ALE fv quasi static stabilized} for $\xi_{R_{\s}} = 0.5$ are presented in Figs.~\ref{fig: Five Field CR p2 t07} and~\ref{fig: Five Field CR p2 t1}.
\begin{figure}[htb]
    \centering
\includegraphics[scale=0.9]{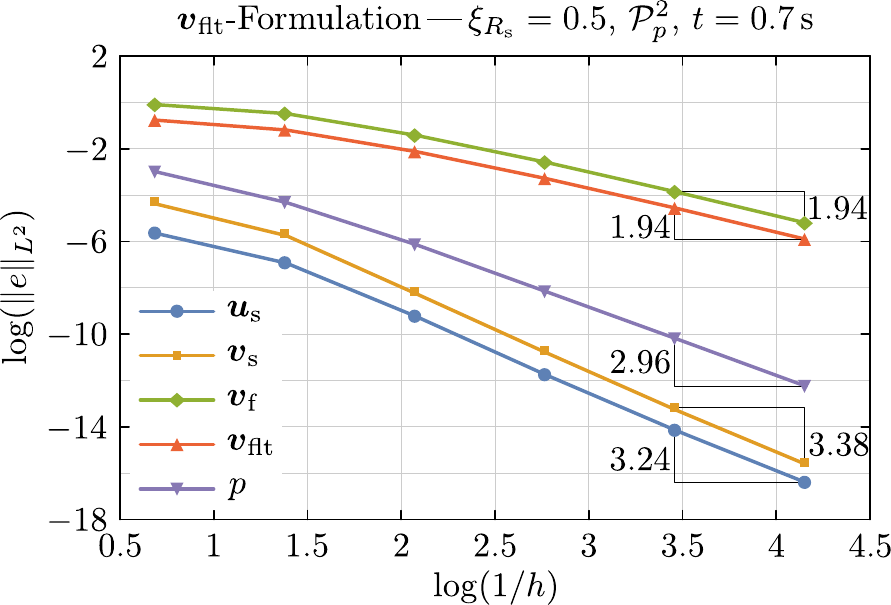}\hfill%
\includegraphics[scale=0.9]{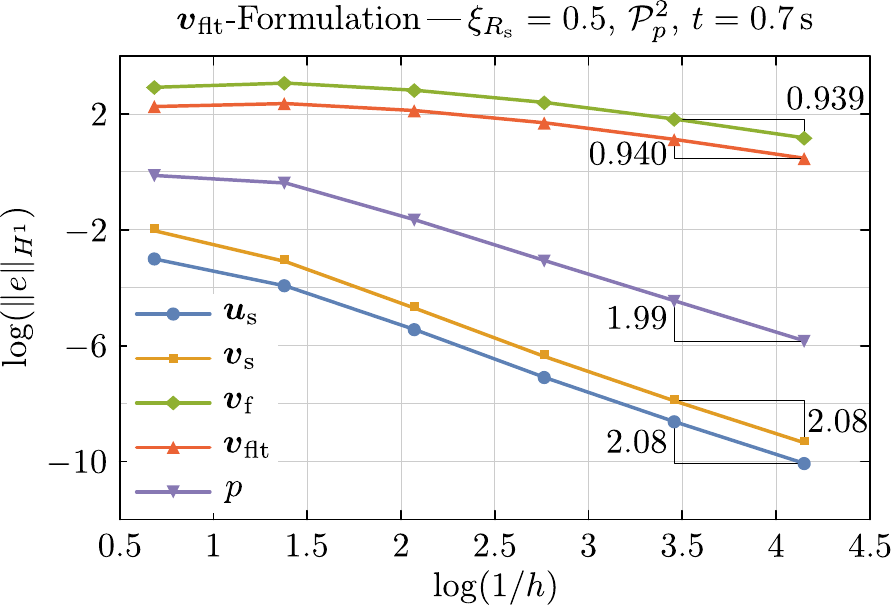}
    \caption{Convergence rates for the $L^{2}$-norm (left) and the $H^{1}$-(semi)norm (right) of the error at $t = \np[s]{0.7}$ obtained via the filtration velocity formulation with $\xi_{R_{\s}} = 0.5$ and $\mathcal{P}_{p}^{2}$.}
    \label{fig: Five Field CR p2 t07}
\end{figure}
\begin{figure}[htb]
    \centering
\hfill%
\includegraphics[scale=0.9]{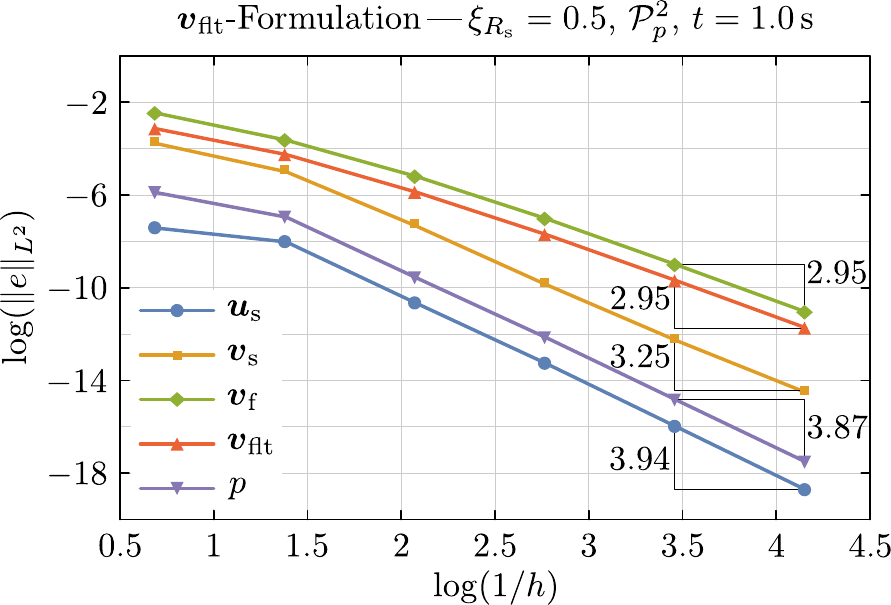}
\includegraphics[scale=0.9]{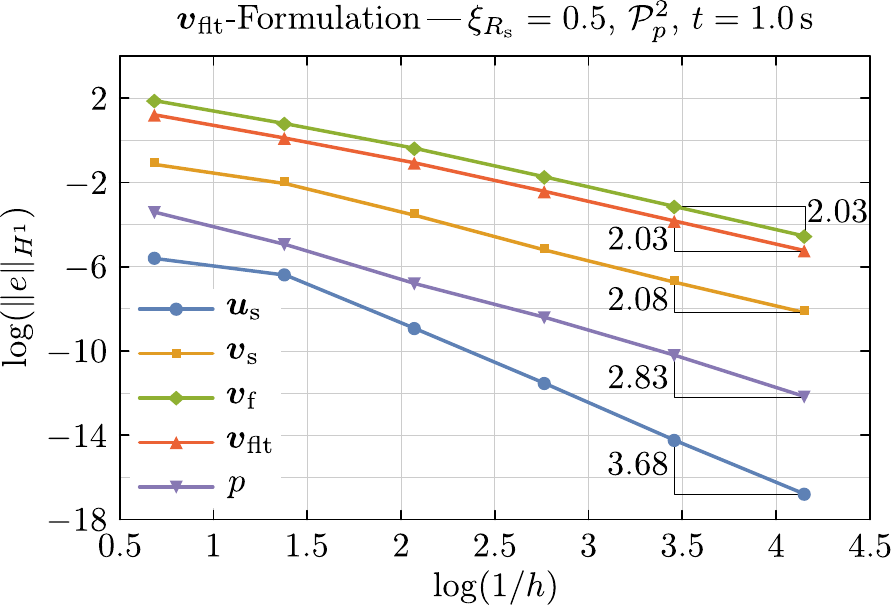}
    \caption{Convergence rates for the $L^{2}$-norm (left) and the $H^{1}$-(semi)norm (right) of the error at $t = \np[s]{1.0}$ obtained via the filtration velocity formulation with $\xi_{R_{\s}} = 0.5$ and $\mathcal{P}_{p}^{2}$.}
    \label{fig: Five Field CR p2 t1}
\end{figure}
The detailed convergence rates corresponding to Figs.~\ref{fig: Five Field CR p2 t07} and~\ref{fig: Five Field CR p2 t1} are in Tables~\ref{table: five field CR xiR 05 t07} and~\ref{table: five field CR xiR 05 t1}.
\begin{table}[ht]
\caption{\label{table: five field CR xiR 05 t07}%
Convergence rates for the stabilized filtration velocity formulation for $t = \np[s]{0.7}$ with $\xi_{R_{\s}} = 0.5$ and$\mathcal{P}_{p}^{2}$ corresponding to Fig.~\ref{fig: Five Field CR p2 t07}.%
}
\begin{minipage}{\textwidth}
\centering
\renewcommand{\footnoterule}{\vspace{-7pt}\rule{0pt}{0pt}}
\renewcommand{\arraystretch}{1.2}
\begin{tabular}{c r r r r r r r r}
\toprule
$h$\,(m) & $\|\us\|_{L^{2}}$ & $\|\vs\|_{L^{2}}$ & $\|\vf\|_{L^{2}}$ & $\|\fv\|_{L^{2}}$ & $\|p\|_{L^{2}}$ & $\|\us\|_{H^{1}}$ & $\|\vs\|_{H^{1}}$ & $\|p\|_{H^{1}}$
\\
\midrule
$\tfrac{1}{2} \to \tfrac{1}{4}$ &
$-1.85$ & $-1.96$ & $-0.559$ & $-0.613$ & $-1.90$ & $-1.33$ & $-1.52$ & $-0.375$ \\
$\tfrac{1}{4} \to \tfrac{1}{8}$ &
$-3.33$ & $-3.62$ & $-1.35$ & $-1.35$ & $-2.63$ & $-2.19$ & $-2.34$ & $-1.84$ \\
$\tfrac{1}{8} \to \tfrac{1}{16}$ &
$-3.64$ & $-3.67$ & $-1.68$ & $-1.68$ & $-2.92$ & $-2.38$ & $-2.41$ & $-2.03$ \\
$\tfrac{1}{16} \to \tfrac{1}{32}$ &
$-3.46$ & $-3.57$ & $-1.84$ & $-1.84$ & $-2.93$ & $-2.21$ & $-2.21$ & $-2.00$ \\
$\tfrac{1}{32} \to \tfrac{1}{64}$ &
$-3.24$ & $-3.38$ & $-1.94$ & $-1.94$ & $-2.96$ & $-2.08$ & $-2.08$ & $-1.99$ \\
\bottomrule
\end{tabular}
\end{minipage}
\end{table}
\begin{table}[ht]
\caption{\label{table: five field CR xiR 05 t1}%
Convergence rates for the stabilized filtration velocity formulation for $t = \np[s]{1.0}$ with $\xi_{R_{\s}} = 0.5$ and$\mathcal{P}_{p}^{2}$ corresponding to Fig.~\ref{fig: Five Field CR p2 t1}.%
}
\begin{minipage}{\textwidth}
\centering
\renewcommand{\footnoterule}{\vspace{-7pt}\rule{0pt}{0pt}}
\renewcommand{\arraystretch}{1.2}
\begin{tabular}{c r r r r r r r r}
\toprule
$h$\,(m) & $\|\us\|_{L^{2}}$ & $\|\vs\|_{L^{2}}$ & $\|\vf\|_{L^{2}}$ & $\|\fv\|_{L^{2}}$ & $\|p\|_{L^{2}}$ & $\|\us\|_{H^{1}}$ & $\|\vs\|_{H^{1}}$ & $\|p\|_{H^{1}}$
\\
\midrule
$\tfrac{1}{2} \to \tfrac{1}{4}$ &
$-0.865$ & $-1.76$ & $-1.68$ & $-1.60$ & $-1.53$ & $-1.14$ & $-1.31$ & $-2.20$ \\
$\tfrac{1}{4} \to \tfrac{1}{8}$ &
$-3.81$ & $-3.38$ & $-2.26$ & $-2.34$ & $-3.77$ & $-3.68$ & $-2.18$ & $-2.69$ \\
$\tfrac{1}{8} \to \tfrac{1}{16}$ &
$-3.76$ & $-3.65$ & $-2.63$ & $-2.64$ & $-3.73$ & $-3.76$ & $-2.38$ & $-2.31$ \\
$\tfrac{1}{16} \to \tfrac{1}{32}$ &
$-3.93$ & $-3.47$ & $-2.86$ & $-2.87$ & $-3.88$ & $-3.90$ & $-2.20$ & $-2.60$ \\
$\tfrac{1}{32} \to \tfrac{1}{64}$ &
$-3.94$ & $-3.25$ & $-2.95$ & $-2.95$ & $-3.87$ & $-3.68$ & $-2.08$ & $-2.83$ \\
\bottomrule
\end{tabular}
\end{minipage}
\end{table}
These results are virtually identical to those already presented in Figs~\ref{fig: CR xiR 05 p 2 t07} and~\ref{fig: CR xiR 05 p 2 t1} indicating that for $\xi_{R_{\s}} = 0.5$ there is no appreciable difference between the two formulations.  Hence, we can say that the convergence rates approach optimal values in the sense discussed earlier.

The convergence results for the quasi-static problem solved using the stabilized filtration velocity formulation in Problem~\ref{dual pb abstract weak ALE fv quasi static stabilized} for $\xi_{R_{\s}} = 0.9$ are presented in Figs.~\ref{fig: Five Field CR p2 xi09 t07} and~\ref{fig: Five Field CR p2 xi09 t1}.
\begin{figure}[htb]
    \centering
\includegraphics[scale=0.9]{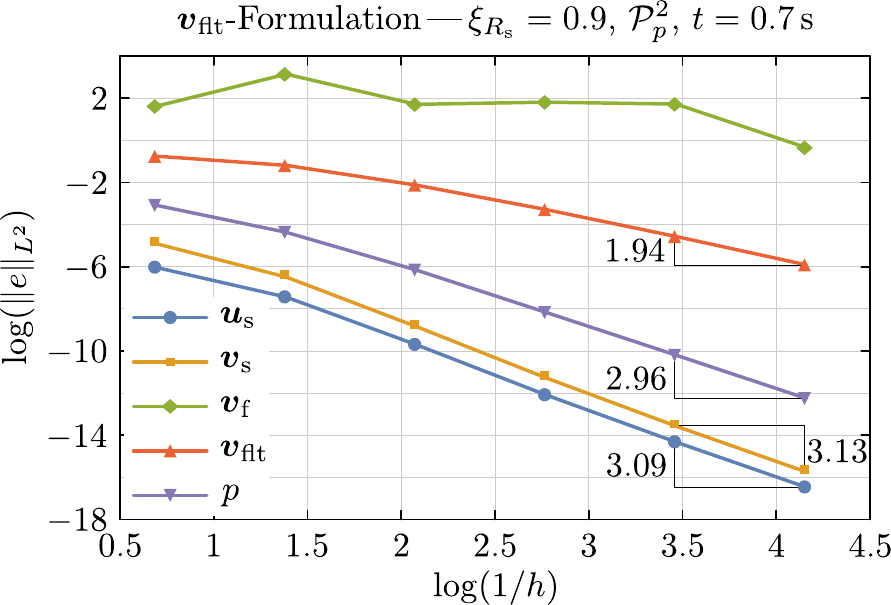}\hfill%
\includegraphics[scale=0.9]{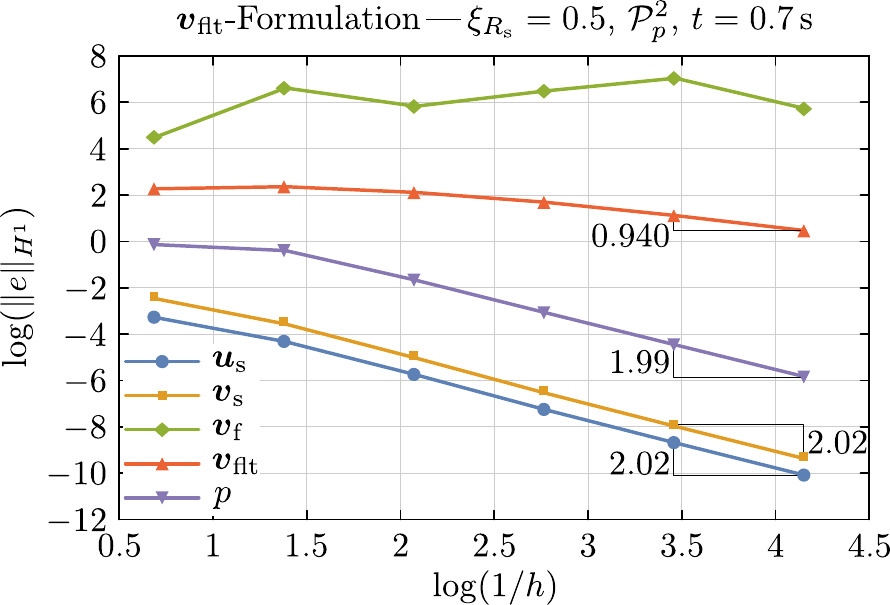}
    \caption{Convergence rates for the $L^{2}$-norm (left) and the $H^{1}$-(semi)norm (right) of the error at $t = \np[s]{0.7}$ obtained via the filtration velocity formulation with $\xi_{R_{\s}} = 0.9$ and $\mathcal{P}_{p}^{2}$. The rates for $\vf$ have been omitted.}
    \label{fig: Five Field CR p2 xi09 t07}
\end{figure}
\begin{figure}[htb]
    \centering
\hfill%
\includegraphics[scale=0.9]{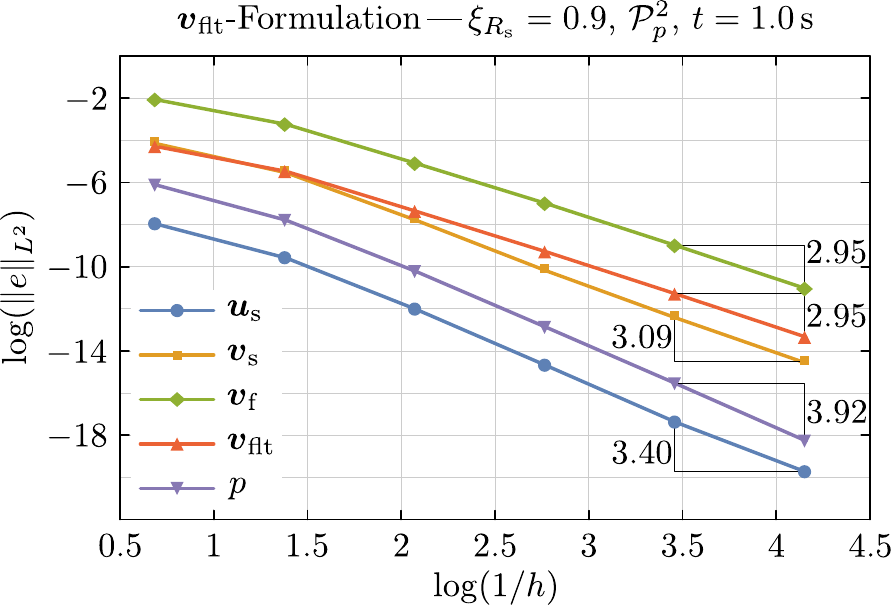}
\includegraphics[scale=0.9]{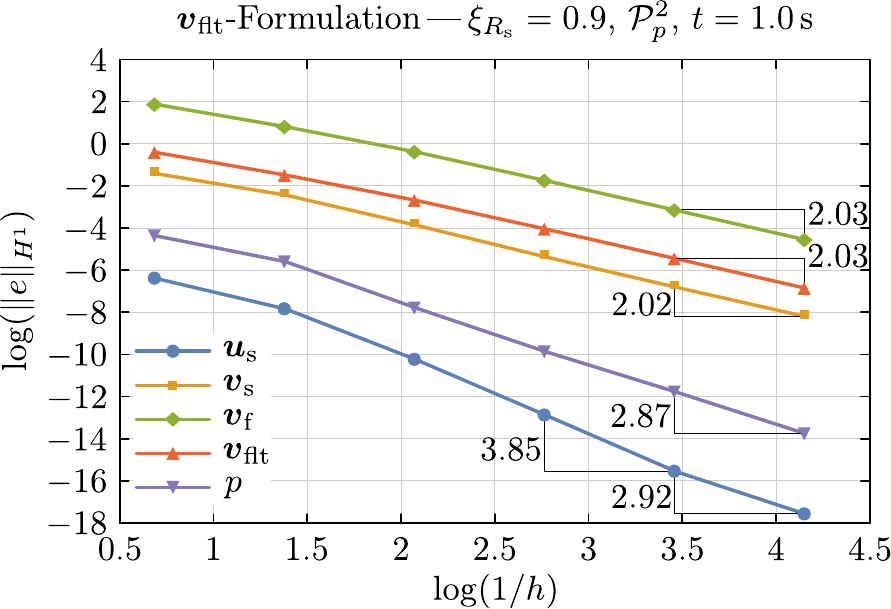}
    \caption{Convergence rates for the $L^{2}$-norm (left) and the $H^{1}$-(semi)norm (right) of the error at $t = \np[s]{1.0}$ obtained via the filtration velocity formulation with $\xi_{R_{\s}} = 0.9$ and $\mathcal{P}_{p}^{2}$. The rates for $\vf$ have been omitted.}
    \label{fig: Five Field CR p2 xi09 t1}
\end{figure}
The results in Figs.~\ref{fig: Five Field CR p2 xi09 t07} and~\ref{fig: Five Field CR p2 xi09 t1} present a slight improvement in the behavior of the fields $\us$, $\vf$, $\fv$, and $p$ relative to that of the fluid velocity formulation (Figs.~\ref{fig: CR xiR 09 p 2 t07} and~\ref{fig: CR xiR 09 p 2 t1}).  However, for the plots with $t = \np[s]{0.7}$, the behavior in the field $\vf$ is degraded to the point that non-convergent behavior can be clearly observed.  This result suggests that the filtration velocity formulation does not have clear advantages over the fluid velocity formulation in terms of accuracy and reliability, at least for the parameter set and range of $\xi_{R_{\s}}$ used in this paper.

\section{Conclusions}
In this paper we considered the quasi-static motion of a poroelastic system consisting of an hyperelastic incompressible solid skeleton saturated by an incompressible viscous fluid.  The model considered stems from mixture theory and it is intended for modular expansion to include progressively more complex physics such as the chemistry of biodegradation.  For this system, we considered a consistent stabilized \ac{FEM} predicated on the assumption that the porosity data provided as input to the model is not necessarily smooth.  For said formulation we proved stability and obtained convergence rates using the method of manufactured solutions.  We have shown that the rates in question are optimal relative to the estimates by \cite{Masud2002A-Stabilized-Mixed-0} and that in some cases, supercovergence is observed.  Future work will focus on the formulation of a stable \ac{FEM} that includes inertia effects.

\section*{Acknowledgements}
\label{sec:acknowledgements}
F.~Costanzo gratefully acknowledges support from the US National Science Foundation through Award N.~1537008 (CMMI).

Sandia National Laboratories is a multi-program laboratory managed and operated by Sandia Corporation, a wholly owned subsidiary of Lockheed Martin Corporation, for the U.S. Department of Energy's National Nuclear Security Administration under contract DE-AC04-94AL85000. 

\section*{Appendix}
\subsection*{Operators in the dual formulation}
This Appendix presents the definitions of the operators used in defining the dual formulation introduced earlier in the paper.  Following \citet{Heltai2012Variational-Imp0}, we introduce the following notation
\begin{equation}
\label{Aeq: notation}
\prescript{}{V^{*}}{\bigl\langle} \psi, \phi \big\rangle_{V},
\end{equation}
in which, given a vector space $V$ and its dual $V^{*}$, $\psi$ and $\phi$ are elements of the vector spaces $V^{*}$ and $V$, respectively, and where $\prescript{}{V^{*}}{\bigl\langle} \bullet, \bullet \big\rangle_{V}$ identifies the duality product between $V^{*}$ and  $V$.  , The operators defined in this Appendix, although not all linear, can be interpreted as though they were matrices.  For this reason we adopt the following notation to identify the domain and ranges of the operators in question and whether or not they are linear:
\begin{itemize}
\item
We will refer to the spaces $\mathcal{V}^{\bv{u}_{\s}}$, $\mathcal{V}^{\bv{v}_{\s}}$, $\mathcal{V}^{\bv{v}_{\f}}$, and $\mathcal{V}^{p}$ are identified by the indices $\sap$, $\sbp$, $\scp$, and $\sdp$, respectively.

\item
We will refer to the spaces $(\mathcal{V}^{\bv{u}_{\s}})^{*}$, $(\mathcal{V}^{\bv{v}_{\s}})^{*}$, $(\mathcal{V}^{\bv{v}_{\f}})^{*}$, and $(\mathcal{V}^{p})^{*}$ are identified by the indices $\sad$, $\sbd$, $\scd$, and $\sdd$, respectively.

\item
If an operator is nonlinear, the symbol denoting the operator will be followed by a list in parenthesis containing the fields on which the operator depends.  If the list is absent, then the operator is understood to be linear.

\item
If an operator is characterized by one one subscript, the latter denotes the range of the operator.
\end{itemize}
For example, the notation $\mathcal{M}_{\sad\sap}$ denotes a linear operator mapping an element $\mathcal{V}^{\bv{u}_{\s}}$ into an element of $(\mathcal{V}^{\bv{u}_{\s}})^{*}$.  By contrast, an operator $\mathcal{A}_{\sbd\scp}(\bv{u}_\s)$ denotes a nonlinear operator that depends on $\bv{u}_{\s}$ and that maps an element of $\mathcal{V}^{\bv{v}_{\f}}$ into an element of $(\mathcal{V}^{\bv{v}_{\s}})^{*}$. As a final example, an operator denoted by $\mathcal{F}_{\sbp}(\bv{u}_{\s})$ is an operator that depends on $\bv{u}_{\s}$ and takes values in $\mathcal{V}^{\bv{v}_{\s}}$.

\subsection*{Operators used to define Problem~\ref{dual pb abstract weak ALE}}
With the above in mind, we define the following operators:
\begin{alignat}{2}
\label{aeq: M11 def}
\mathcal{M}_{\sad\sap} &: \mathcal{H}^{\bv{u}_{\s}} \to (\mathcal{V}^{\bv{u}_{\s}})^{*},
&\quad
\prescript{}{(\mathcal{V}^{\bv{u}_{\s}})^{*}}{\bigl\langle}
\mathcal{M}_{\sad\sap}\bv{v},\bv{w}
\big\rangle_{\mathcal{V}^{\bv{u}_{\s}}} 
&\coloneqq \int_{B_{\s}} \bv{w} \cdot \bv{v}, 
\nonumber
\\
& & &\qquad\qquad
\forall \bv{v} \in \mathcal{H}^{\bv{u}_{\s}}, \forall \bv{w} \in \mathcal{V}^{\bv{u}_{\s}}_{0},
\\
\label{eq: M12 def}
\mathcal{M}_{\sad\sbp}
&:
\mathcal{V}^{\bv{v}_{\s}} \to (\mathcal{V}^{\bv{u}_{\s}})^{*},
&\quad
\prescript{}{(\mathcal{V}^{\bv{u}_{\s}})^{*}}{\bigl\langle}
\mathcal{M}_{\sad\sbp} \bv{v} , \bv{w}
\big\rangle_{\mathcal{V}^{\bv{u}_{\s}}}
&\coloneqq
\int_{B_{\s}} \bv{w} \cdot \bv{v}, 
\nonumber
\\
& & & \qquad\qquad
\forall \bv{v} \in \mathcal{V}^{\bv{v}_{\s}}, \forall \bv{w} \in \mathcal{V}^{\bv{u}_{\s}}_{0},
\\
\label{eq: B42 def}
\mathcal{B}_{\sdd\sbp}(\bv{u}_{\s}; \xi_{R_{\s}}) &:  \mathcal{V}^{\bv{v}_{\s}} \to (\mathcal{V}^{p})^{*},
&\quad
\prescript{}{(\mathcal{V}^{p})^{*}}{\bigl\langle}
\mathcal{B}_{\sdd\sbp}(\bv{u}_{\s}; \xi_{R_{\s}}) \bv{v}, q
\big\rangle_{\mathcal{V}^{p}} 
&\coloneqq
-\int_{B_{\s}} \xi_{R_{\s}} \Grad q \cdot \tensor{F}_{\s}^{-1} \bv{v}, 
\nonumber
\\
& & & \qquad\qquad
\forall \bv{v} \in \mathcal{V}^{\bv{v}_{\s}} , \forall q \in \mathcal{V}^{p},
\\
\label{eq: B42T def}
\transpose{\mathcal{B}}_{\sdd\sbp}(\bv{u}_{\s}; \xi_{R_{\s}}) &:  \mathcal{V}^{p} \to (\mathcal{V}^{\bv{v}_{\s}})^{*},
&\quad
\prescript{}{(\mathcal{V}^{\bv{v}_{\s}})^{*}}{\bigl\langle}
\transpose{\mathcal{B}}_{\sdd\sbp}(\bv{u}_{\s}; \xi_{R_{\s}}) q, \bv{v}
\big\rangle_{\mathcal{V}^{\bv{v}_{\s}}} 
&\coloneqq
-\int_{B_{\s}} \xi_{R_{\s}} \Grad q \cdot \tensor{F}_{\s}^{-1} \bv{v}, 
\nonumber
\\
& & & \qquad\qquad
\forall q \in \mathcal{V}^{p}, \forall \bv{v} \in \mathcal{V}^{\bv{v}_{\s}}_{0},
\\
\label{eq: B43 def}
\mathcal{B}_{\sdd\scp}(\bv{u}_{\s}; \xi_{R_{\s}}) &:  \mathcal{V}^{\bv{v}_{\f}} \to (\mathcal{V}^{p})^{*},
&\quad
\prescript{}{(\mathcal{V}^{p})^{*}}{\bigl\langle}
\mathcal{B}_{\sdd\scp}(\bv{u}_{\s}; \xi_{R_{\s}}) \bv{v}, q
\big\rangle_{\mathcal{V}^{p}} 
&\coloneqq
-\int_{B_{\s}} (J_{\s} - \xi_{R_{\s}}) \Grad q \cdot \tensor{F}_{\s}^{-1} \bv{v}, 
\nonumber
\\
& & & \qquad\qquad
\forall \bv{v} \in \mathcal{V}^{\bv{v}_{\f}} , \forall q \in \mathcal{V}^{p},
\\
\label{eq: B43T def}
\transpose{\mathcal{B}}_{\sdd\scp}(\bv{u}_{\s}; \xi_{R_{\s}}) &:  \mathcal{V}^{p} \to (\mathcal{V}^{\bv{v}_{\f}})^{*},
&\quad
\prescript{}{(\mathcal{V}^{\bv{v}_{\f}})^{*}}{\bigl\langle}
\transpose{\mathcal{B}}_{\sdd\scp}(\bv{u}_{\s}; \xi_{R_{\s}}) q, \bv{v}
\big\rangle_{\mathcal{V}^{\bv{v}_{\f}}} 
&\coloneqq
-\int_{B_{\s}} (J_{\s} - \xi_{R_{\s}}) \Grad q \cdot \tensor{F}_{\s}^{-1} \bv{v}, 
\nonumber
\\
& & & \qquad\qquad
\forall q \in \mathcal{V}^{p}, \forall \bv{v} \in \mathcal{V}^{\bv{v}_{\f}},
\\
\label{eq: M33 def}
\mathcal{M}_{\scd\scp}(\bv{u}_{\s}; \xi_{R_{\s}}) &:  \mathcal{H}^{\bv{v}_{\f}} \to (\mathcal{V}^{\bv{v}_{\f}})^{*},
&\quad
\prescript{}{(\mathcal{V}^{\bv{v}_{\f}})^{*}}{\bigl\langle}
\mathcal{M}_{\scd\scp}(\bv{u}_{\s}; \xi_{R_{\s}}) \bv{w}, \bv{v}
\big\rangle_{\mathcal{V}^{\bv{v}_{\f}}} 
&\coloneqq
\int_{B_{\s}} (J_{\s} - \xi_{R_{\s}}) \rho^{*}_{\f} \bv{w} \cdot \bv{v}, 
\nonumber
\\
& & & \qquad\qquad
\forall \bv{w} \in \mathcal{H}^{\bv{v}_{\f}}, \forall \bv{v} \in \mathcal{V}^{\bv{v}_{\f}},
\\
\label{eq: N33 def}
\mathcal{N}_{\scd\scp}(\bv{u}_{\s},\bv{v}_{\f}; \xi_{R_{\s}}) &: \mathcal{V}^{\bv{v}_{\f}} \to (\mathcal{V}^{\bv{v}_{\f}})^{*},
& &
\hspace{-50mm}
\begin{aligned}[t]
&\prescript{}{(\mathcal{V}^{\bv{v}_{\f}})^{*}}{\bigl\langle}
\mathcal{N}_{\scd\scp}(\bv{u}_{\s},\bv{v}_{\f}; \xi_{R_{\s}}) \bv{w}, \bv{v}
\big\rangle_{\mathcal{V}^{\bv{v}_{\f}}} 
\\
&\hspace{35mm}\coloneqq
\int_{B_{\s}} (J_{\s} - \xi_{R_{\s}}) \rho^{*}_{\f} \Grad \bv{v}_{\f}\tensor{F}_{\s}^{-1} \bv{w} \cdot \bv{v}, 
\end{aligned}
\nonumber
\\
& & & \qquad\qquad\qquad
\forall \bv{w},\bv{v} \in \mathcal{V}^{\bv{v}_{\f}},
\\
\label{eq: N32 def}
\mathcal{N}_{\scd\sbp}(\bv{u}_{\s},\bv{v}_{\f}; \xi_{R_{\s}}) &: \mathcal{V}^{\bv{v}_{\s}} \to (\mathcal{V}^{\bv{v}_{\f}})^{*},
& &
\hspace{-50mm}
\begin{aligned}[t]
&\prescript{}{(\mathcal{V}^{\bv{v}_{\f}})^{*}}{\bigl\langle}
\mathcal{N}_{\scd\sbp}(\bv{u}_{\s},\bv{v}_{\f}; \xi_{R_{\s}}) \bv{w}, \bv{v}
\big\rangle_{\mathcal{V}^{\bv{v}_{\f}}} 
\\
&\hspace{35mm}\coloneqq
\int_{B_{\s}} (J_{\s} - \xi_{R_{\s}}) \rho^{*}_{\f} \Grad \bv{v}_{\f}\tensor{F}_{\s}^{-1} \bv{w} \cdot \bv{v}, 
\end{aligned}
\nonumber
\\
& & & \qquad\qquad
\forall \bv{w} \in \mathcal{V}^{\bv{v}_{\s}}, \forall \bv{v} \in \mathcal{V}^{\bv{v}_{\f}},
\\
\label{eq: D33 def}
\mathcal{D}_{\scd\scp}(\bv{u}_{\s}; \xi_{R_{\s}}) &: \mathcal{V}^{\bv{v}_{\f}} \to (\mathcal{V}^{\bv{v}_{\f}})^{*},
&\quad
\prescript{}{(\mathcal{V}^{\bv{v}_{\f}})^{*}}{\bigl\langle}
\mathcal{D}_{\scd\scp}(\bv{u}_{\s}; \xi_{R_{\s}}) \bv{w}, \bv{v}
\big\rangle_{\mathcal{V}^{\bv{v}_{\f}}} 
&\coloneqq
\int_{B_{\s}} (J_{\s} - \xi_{R_{\s}})^{2} \frac{\mu_{\f}}{J_{\s}k_{\s}} \bv{w} \cdot \bv{v}, 
\nonumber
\\
& & & \qquad\qquad
\forall \bv{w},\bv{v} \in \mathcal{V}^{\bv{v}_{\f}},
\\
\label{eq: D32 def}
\mathcal{D}_{\scd\sbp}(\bv{u}_{\s}; \xi_{R_{\s}}) &: \mathcal{V}^{\bv{v}_{\s}} \to (\mathcal{V}^{\bv{v}_{\f}})^{*},
&\quad
\prescript{}{(\mathcal{V}^{\bv{v}_{\f}})^{*}}{\bigl\langle}
\mathcal{D}_{\scd\sbp}(\bv{u}_{\s}; \xi_{R_{\s}}) \bv{w}, \bv{v}
\big\rangle_{\mathcal{V}^{\bv{v}_{\f}}} 
&\coloneqq
\int_{B_{\s}} (J_{\s} - \xi_{R_{\s}})^{2} \frac{\mu_{\f}}{J_{\s}k_{\s}} \bv{w} \cdot \bv{v}, 
\nonumber
\\
& & & \qquad\qquad
\forall \bv{w} \in \mathcal{V}^{\bv{v}_{\s}}, \forall \bv{v} \in \mathcal{V}^{\bv{v}_{\f}},
\\
\label{eq: D32T def}
\transpose{\mathcal{D}}_{\scd\sbp}(\bv{u}_{\s}; \xi_{R_{\s}}) &: \mathcal{V}^{\bv{v}_{\f}} \to (\mathcal{V}^{\bv{v}_{\s}})^{*},
&\quad
\prescript{}{(\mathcal{V}^{\bv{v}_{\s}})^{*}}{\bigl\langle}
\transpose{\mathcal{D}}_{\scd\sbp}(\bv{u}_{\s}; \xi_{R_{\s}}) \bv{v}, \bv{w}
\big\rangle_{\mathcal{V}^{\bv{v}_{\s}}} 
&\coloneqq
\int_{B_{\s}} (J_{\s} - \xi_{R_{\s}})^{2} \frac{\mu_{\f}}{J_{\s}k_{\s}} \bv{v} \cdot \bv{w}, 
\nonumber
\\
& & & \qquad\qquad
\forall \bv{w} \in \mathcal{V}_{0}^{\bv{v}_{\s}}, \forall \bv{v} \in \mathcal{V}^{\bv{v}_{\f}},
\\
\label{eq: D22 def}
\mathcal{D}_{\sbd\sbp}(\bv{u}_{\s}; \xi_{R_{\s}}) &: \mathcal{V}^{\bv{v}_{\s}} \to (\mathcal{V}^{\bv{v}_{\s}})^{*},
&\quad
\prescript{}{(\mathcal{V}^{\bv{v}_{\s}})^{*}}{\bigl\langle}
\mathcal{D}_{\sbd\sbp}(\bv{u}_{\s}; \xi_{R_{\s}}) \bv{v}, \bv{w}
\big\rangle_{\mathcal{V}^{\bv{v}_{\s}}} 
&\coloneqq
\int_{B_{\s}} (J_{\s} - \xi_{R_{\s}})^{2} \frac{\mu_{\f}}{J_{\s}k_{\s}} \bv{v} \cdot \bv{w}, 
\nonumber
\\
& & & \qquad\qquad
\forall \bv{w} \in \mathcal{V}_{0}^{\bv{v}_{\s}}, \forall \bv{v} \in \mathcal{V}^{\bv{v}_{\s}},
\\
\label{eq: M22 def}
\mathcal{M}_{\sbd\sbp}(\xi_{R_{\s}}) &: \mathcal{H}^{\bv{v}_{\s}} \to (\mathcal{V}^{\bv{v}_{\s}})^{*},
&\quad
\prescript{}{(\mathcal{V}^{\bv{v}_{\s}})^{*}}{\bigl\langle}
\mathcal{M}_{\sbd\sbp}(\xi_{R_{\s}}) \bv{v}, \bv{w}
\big\rangle_{\mathcal{V}^{\bv{v}_{\s}}} 
&\coloneqq
\int_{B_{\s}}\xi_{R_{\s}} \rho_{\s}^{*} \bv{v} \cdot \bv{w}, 
\nonumber
\\
& & & \qquad\qquad
\forall \bv{w} \in \mathcal{V}_{0}^{\bv{v}_{\s}}, \forall \bv{v} \in \mathcal{H}^{\bv{v}_{\s}},
\\
\label{eq: A21 def}
\mathcal{A}_{\sbd}(\bv{u};\xi_{R_{\s}}) &\in (\mathcal{V}^{\bv{v}_{\s}})^{*},
&\quad
\prescript{}{(\mathcal{V}^{\bv{v}_{\s}})^{*}}{\bigl\langle}
\mathcal{A}_{\sbd}(\bv{u};\xi_{R_{\s}}), \bv{w}
\big\rangle_{\mathcal{V}^{\bv{v}_{\s}}} 
&\coloneqq
\int_{B_{\s}} \tensor{P}^{e}[\bv{u}] \colondot \Grad \bv{w}, 
\nonumber
\\
& & & \qquad\qquad
\forall \bv{w} \in \mathcal{V}_{0}^{\bv{v}_{\s}}, \forall \bv{u} \in \mathcal{V}^{\bv{u}_{\s}},
\\
\label{eq: S42 def}
\mathcal{S}_{\sdd\sbp}(\bv{u}_{\s}) &:  \mathcal{V}^{\bv{v}_{\s}} \to (\mathcal{V}^{p})^{*},
&\quad
\prescript{}{(\mathcal{V}^{p})^{*}}{\bigl\langle}
\mathcal{S}_{\sdd\sbp}(\bv{u}_{\s}) \bv{v}, q
\big\rangle_{\mathcal{V}^{p}} 
&\coloneqq
\int_{\Gamma^{N}_{\s}} J_{\s} \tensor{F}_{\s}^{-1} \bv{v} \cdot \bv{n}_{\s} q, 
\nonumber
\\
& & & \qquad\qquad
\forall \bv{v} \in \mathcal{V}^{\bv{v}_{\s}}, \forall q \in \mathcal{V}^{p},
\\
\label{eq: S42T def}
\transpose{\mathcal{S}}_{\sdd\sbp}(\bv{u}_{\s}) &:  \mathcal{V}^{p} \to (\mathcal{V}^{\bv{v}_{\s}})^{*},
&\quad
\prescript{}{(\mathcal{V}^{\bv{v}_{\s}})^{*}}{\bigl\langle}
\transpose{\mathcal{S}}_{\sdd\sbp}(\bv{u}_{\s}) q, \bv{v}
\big\rangle_{\mathcal{V}^{\bv{v}_{\s}}} 
&\coloneqq
\int_{\Gamma^{N}_{\s}} J_{\s} \tensor{F}_{\s}^{-1} \bv{v} \cdot \bv{n}_{\s} q, 
\nonumber
\\
& & & \qquad\qquad
\forall \bv{v} \in \mathcal{V}_{0}^{\bv{v}_{\s}}, \forall q \in \mathcal{V}^{p},
\\
\label{eq: F4 def}
\mathcal{F}_{\sdd}(\bv{u}_{\s}) &\in (\mathcal{V}^{p})^{*},
&\quad
\prescript{}{(\mathcal{V}^{p})^{*}}{\bigl\langle}
\mathcal{F}_{\sdd}(\bv{u}_{\s}), q
\big\rangle_{\mathcal{V}^{p}} 
&\coloneqq
-\int_{\Gamma^{D}_{\s}} J_{\s} \tensor{F}_{\s}^{-1} \bar{\bv{v}}_{\s} \cdot \bv{n}_{\s} q, 
\nonumber
\\
& & & \qquad\qquad
\forall q \in \mathcal{V}^{p}
\\
\label{eq: F3 def}
\mathcal{F}_{\scd}(\bv{u}_{\s}; \xi_{R_{\s}}) &\in (\mathcal{V}^{\bv{v}_{\f}})^{*},
&\quad
\prescript{}{(\mathcal{V}^{\bv{v}_{\f}})^{*}}{\bigl\langle}
\mathcal{F}_{\scd}(\bv{u}_{\s}; \xi_{R_{\s}}), \bv{v}
\big\rangle_{\mathcal{V}^{\bv{v}_{\f}}} 
&\coloneqq
\int_{B_{\s}} (J_{\s} - \xi_{R_{\s}}) \rho^{*}_{\f} \bv{b}_{\f} \cdot \bv{v} , 
\nonumber
\\
& & & \qquad
\forall \bv{b}_{\f} \in H^{-1}(B_{\s}), \forall \bv{v} \in \mathcal{V}^{\bv{v}_{\f}},
\\
\label{eq: F2 def}
\mathcal{F}_{\sbd}(\xi_{R_{\s}}) &\in (\mathcal{V}^{\bv{v}_{\s}})^{*},
&\quad
\prescript{}{(\mathcal{V}^{\bv{v}_{\s}})^{*}}{\bigl\langle}
\mathcal{F}_{\sbd}(\xi_{R_{\s}}), \bv{v}
\big\rangle_{\mathcal{V}^{\bv{v}_{\s}}} 
&\coloneqq
\begin{aligned}[t]
&\int_{B_{\s}} \xi_{R_{\s}} \rho^{*}_{\s} \bv{b}_{\s} \cdot \bv{v} , 
+\int_{\Gamma_{\s}^{N}} \bar{\bv{s}} \cdot \bv{v}
\end{aligned}
\nonumber
\\
& & &
\hspace{-20mm}
\forall \bv{b}_{\s} \in H^{-1}(B_{\s}), \forall \bar{\bv{s}} \in H^{-\frac{1}{2}}\bigl(\Gamma_{\s}^{N}\bigr), \forall \bv{v} \in \mathcal{V}^{\bv{v}_{\s}}_{0},
\\
\label{eq: M33 check def}
\check{\mathcal{M}}_{\scd\scp}(\us;\xi_{R_{\s}}) &:  \mathcal{V}^{\vf} \to (\mathcal{V}^{\vf})^{*},
&\quad
\prescript{}{(\mathcal{V}^{\vf})^{*}}{\bigl\langle}
\check{\mathcal{M}}_{\scd\scp}(\us;\xi_{R_{\s}}) \bv{v}, \bv{w}
\big\rangle_{\mathcal{V}^{\vf}} 
&\coloneqq
\begin{aligned}[t]
&\int_{B_{\s}} (J_{\s} - \xi_{R_{\s}}) \bv{v} \cdot \bv{w}, 
\end{aligned}
\nonumber
\\
& & &
\forall \bv{v}, \bv{w} \in \mathcal{V}^{\vf},
\\
\label{eq: M32 check def}
\check{\mathcal{M}}_{\scd\sbp}(\us;\xi_{R_{\s}}) &:  \mathcal{V}^{\vs} \to (\mathcal{V}^{\vf})^{*},
&\quad
\prescript{}{(\mathcal{V}^{\vf})^{*}}{\bigl\langle}
\check{\mathcal{M}}_{\scd\sbp}(\us;\xi_{R_{\s}}) \bv{v}, \bv{w}
\big\rangle_{\mathcal{V}^{\vf}} 
&\coloneqq
\begin{aligned}[t]
&\int_{B_{\s}} (J_{\s} - \xi_{R_{\s}}) \bv{v} \cdot \bv{w}, 
\end{aligned}
\nonumber
\\
& & &
\forall \bv{v} \in \mathcal{V}^{\vs}, \forall \bv{w} \in \mathcal{V}^{\vf}.
\end{alignat}

\subsubsection*{Additional operators used to define Problem~\ref{dual pb abstract weak ALE stabilized static}}
In setting up Problem~\ref{dual pb abstract weak ALE stabilized static} the following additional operators are defined:
\begin{gather}
\label{eq: F2 tilde def}
\begin{multlined}
\tilde{\mathcal{F}}_{\sbd}(\bv{u}_{\s}; \xi_{R_{\s}}) \in (\mathcal{V}^{\bv{v}_{\s}})^{*},
\quad
\prescript{}{(\mathcal{V}^{\bv{v}_{\s}})^{*}}{\bigl\langle}
\tilde{\mathcal{F}}_{\sbd}(\bv{u}_{\s}; \xi_{R_{\s}}), \bv{v}
\big\rangle_{\mathcal{V}^{\bv{v}_{\s}}} 
\coloneqq
-\int_{B_{\s}} (J_{\s} - \xi_{R_{\s}}) \rho^{*}_{\f} \bv{w} \cdot \bv{v}, 
\\
\forall \bv{w} \in H^{-1}(B_{\s}), \forall \bv{v} \in \mathcal{V}_{0}^{\bv{v}_{\s}},
\end{multlined}
\\
\label{eq: K44 def}
\begin{multlined}
\mathcal{K}_{\sdd\sdp}(\bv{u}_{\s}) : \mathcal{V}^{p} \to (\mathcal{V}^{p})^{*},
\quad
\prescript{}{(\mathcal{V}^{p})^{*}}{\bigl\langle}
\mathcal{K}_{\sdd\sdp}(\bv{u}_{\s}) p, q
\big\rangle_{\mathcal{V}^{p}} 
\coloneqq
\int_{B_{\s}} \frac{J_{\s}k_{\s}}{\mu_{\f}}  \tensor{C}_{\s}^{-1} \Grad p  \cdot \Grad q,
\\
\forall p,q \in \mathcal{V}^{p}.
\end{multlined}
\\
\label{eq: F4 tilde def}
\begin{multlined}
\tilde{\mathcal{F}}_{\sdd}(\bv{u}_{\s}) \in (\mathcal{V}^{p})^{*},
\quad
\prescript{}{(\mathcal{V}^{p})^{*}}{\bigl\langle}
\tilde{\mathcal{F}}_{\sdd}(\bv{u}_{\s}) - \mathcal{F}_{\sdd}(\bv{u}_{\s}), q
\big\rangle_{\mathcal{V}^{p}} 
\coloneqq
\int_{B_{\s}} \frac{J_{\s}k_{\s} \rho^{*}_{\f}}{\mu_{\f}}  \tensor{F}_{\s}^{-1} \bv{v}  \cdot \Grad q, 
\\
\forall \bv{v} \in H^{-1}(B_{\s}), \forall q \in \mathcal{V}^{p},
\end{multlined}
\end{gather}

\subsubsection*{Additional operators used to define Problem~\ref{dual pb abstract weak ALE fv}}
\begin{gather}
\label{eq: M23 def}
\begin{multlined}
\mathcal{M}_{\sbd\scp}(\bv{u}_{\s}; \xi_{R_{\s}}) : \mathcal{H}^{\vf} \to (\mathcal{V}^{\vs})^{*},
\quad
\prescript{}{(\mathcal{V}^{\vs})^{*}}{\bigl\langle}
\mathcal{M}_{\sbd\scp}(\bv{u}_{\s}; \xi_{R_{\s}}) \bv{v}, \bv{w}
\big\rangle_{\mathcal{V}^{\vs}} 
\coloneqq
\int_{B_{\s}} (J_{\s} - \xi_{R_{\s}}) \rho_{\f}^{*} \bv{v} \cdot \bv{w},
\\
\forall \bv{w} \in \mathcal{V}^{\vs}_{0}, \forall \bv{v} \in \mathcal{H}^{\vf}.
\end{multlined}
\\
\label{eq: N23 def}
\begin{multlined}
\mathcal{N}_{\sbd\scp}(\bv{u}_{\s}, \vf; \xi_{R_{\s}}) : \mathcal{V}^{\vf} \to (\mathcal{V}^{\vs})^{*},
\quad
\prescript{}{(\mathcal{V}^{\vs})^{*}}{\bigl\langle}
\mathcal{N}_{\sbd\scp}(\bv{u}_{\s}, \vf; \xi_{R_{\s}}) \bv{v}, \bv{w}
\big\rangle_{\mathcal{V}^{\vs}}
\\
\hspace{90mm}\coloneqq
\int_{B_{\s}} (J_{\s} - \xi_{R_{\s}}) \rho_{\f}^{*} \Grad \vf \tensor{F}_{\s}^{-1} \bv{v} \cdot \bv{w},
\\
\forall \bv{w} \in \mathcal{V}^{\vs}_{0}, \forall \bv{v} \in \mathcal{V}^{\vf}.
\end{multlined}
\\
\label{eq: N22 def}
\begin{multlined}
\mathcal{N}_{\sbd\sbp}(\bv{u}_{\s}, \vf; \xi_{R_{\s}}) : \mathcal{V}^{\vs} \to (\mathcal{V}^{\vs})^{*},
\quad
\prescript{}{(\mathcal{V}^{\vs})^{*}}{\bigl\langle}
\mathcal{N}_{\sbd\sbp}(\bv{u}_{\s}, \vf; \xi_{R_{\s}}) \bv{v}, \bv{w}
\big\rangle_{\mathcal{V}^{\vs}}
\\
\hspace{90mm}\coloneqq
\int_{B_{\s}} (J_{\s} - \xi_{R_{\s}}) \rho_{\f}^{*} \Grad \vf \tensor{F}_{\s}^{-1} \bv{v} \cdot \bv{w},
\\
\forall \bv{w} \in \mathcal{V}^{\vs}_{0}, \forall \bv{v} \in \mathcal{V}^{\vs}.
\end{multlined}
\end{gather}

}

\bibliographystyle{chicago}
\bibliography{CMPoroelastic}

\end{document}